\documentclass[11pt]{amsart}

\usepackage[margin=1.15in]{geometry}
\usepackage{amssymb,mathtools}
\newcommand{\setZ}{\mathbb Z}
\providecommand\given{}
\newcommand\SetSymbol[1][]{%
  \nonscript\:#1\vert\allowbreak\nonscript\:\mathopen{}%
}
\DeclarePairedDelimiterX\Set[1]\{\}{%
  \renewcommand\given{\SetSymbol[\delimsize]}#1%
}
\DeclareMathOperator{\rk}{rk}
\DeclareMathOperator{\Mat}{M}
\DeclareMathOperator{\diag}{diag}
\DeclareMathOperator{\id}{id}
\DeclareMathOperator{\Ad}{Ad}
\usepackage{enumitem}
\usepackage{needspace}
\usepackage{xcolor}
\usepackage{tikz}
\usetikzlibrary{arrows.meta,calc}
\tikzset{
  bratteli edge/.style={-{Stealth[length=1.6mm,width=1.1mm]},
  line width=.45pt,shorten >=1.7pt,shorten <=1.7pt}
}

\newtheorem{theorem}{Theorem}[section]
\newtheorem{proposition}[theorem]{Proposition}
\newtheorem{lemma}[theorem]{Lemma}
\newtheorem{corollary}[theorem]{Corollary}
\theoremstyle{definition}
\newtheorem{definition}[theorem]{Definition}
\newtheorem{example}[theorem]{Example}
\theoremstyle{remark}
\newtheorem{remark}[theorem]{Remark}

\usepackage[colorlinks=true, hypertexnames=false,
  linkcolor=blue!55!black, citecolor=blue!55!black,
  urlcolor=blue!55!black]{hyperref}
\usepackage[nameinlink, noabbrev, capitalise]{cleveref}

\title{Uniqueness of Rank-Metric Completions of Bratteli Systems}

\author{Baojie Jiang}
\address{\hskip-\parindent
  Baojie Jiang,
  School of Mathematical Sciences, Chongqing Normal University, University Town, Shapingba District, Chongqing, 401331, China.}
\email{jiangbaojie@gmail.com}

\date{\today}
\subjclass[2010]{Primary 16E50; Secondary 16S50, 16W80, 46L10}
\keywords{Sylvester matrix rank function, Bratteli diagram, rank-metric
completion, ultramatricial algebra}

\hypersetup{
  pdftitle={Uniqueness of Rank-Metric Completions of Bratteli Systems},
  pdfauthor={Baojie Jiang},
  pdfsubject={Sylvester rank completions of Bratteli systems and their
    operator-algebraic interpretation},
  pdfkeywords={Sylvester matrix rank function, Bratteli diagram,
    rank-metric completion, ultramatricial algebra}
}

\begin{document}

\allowdisplaybreaks
\raggedbottom

\begin{abstract}
Let $R$ be a unital ring equipped with a Sylvester matrix rank function
$\rk$.  A harmonic function $\alpha$ on a Bratteli diagram $B$
defines a weighted matrix rank on the associated algebraic direct limit
$A(B,R)$.
We prove that, if $\alpha$ is extreme and the total weight of blocks
of any fixed bounded size tends to zero, then the rank completion of
$A(B,R)$ is isomorphic to $\mathcal M_{R,\rk}$, the rank completion
of the direct system $\Mat_{2^k}(R)$ with connecting maps
$x\mapsto\diag(x,x)$ and normalized ranks $2^{-k}\rk$.
The isomorphism preserves the unital $R$-algebra structure and the
ranks on all rectangular matrices.
The coefficient ring need not be regular, and the specified rank
need not be induced from a regular ring.
We recover factor-sequence uniqueness and construct corners of every
prescribed rank in $(0,1]$ that are isomorphic to $\mathcal M_{R,\rk}$
with their normalized ranks.
Examples show that the coefficient rank can affect the isomorphism
type and that the completion can be non-regular and non-simple.
For complex coefficients, the trace determined by $\alpha$ gives a
rank completion of the associated AF $C^*$-algebra canonically
isomorphic to the affiliated-operator ring of its GNS closure.
The rank completion of the algebraic direct limit can be a proper
subring of this ring.
\end{abstract}

\maketitle
\enlargethispage{2pt}

\section{Introduction}

The hyperfinite $\mathrm{II}_1$ factor $\mathcal R$ originates in
the work of Murray and von Neumann
\cite{MurrayVonNeumann1936,MurrayVonNeumann1943} on rings of operators.
It can be realized as the weak operator closure of
$\bigcup_{n\geq1}\Mat_{2^n}(\mathbb C)$, with inclusions
$x\mapsto
\diag(x,x)$, in the GNS representation of the trace
induced by the normalized matrix traces.
Murray and von Neumann \cite{MurrayVonNeumann1943} proved that every
hyperfinite $\mathrm{II}_1$ factor with separable predual is isomorphic
to $\mathcal R$.
The dimension theory of projections in finite factors also led von
Neumann to continuous geometry \cite{vonNeumann1936Geometry}.

An algebraic counterpart is obtained by completing matrix algebras in
the rank metric.
For a unital ring $R$ with a specified Sylvester matrix rank function $\rk$, put
\[
 \mathcal M_{R,\rk}
 :=\overline{\varinjlim_k
 \left(\Mat_{2^k}(R),\,x\mapsto
 \begin{pmatrix*}[c]
  x & 0\\
  0 & x
 \end{pmatrix*}\right)}^{\,\rk},
\]
where the normalized rank at level $k$ is $2^{-k}\rk$.
For a division ring $D$ with its unique Sylvester matrix rank function, this is
von Neumann's continuous factor $\mathcal M_D$, a complete regular ring whose rank
takes every value in $[0,1]$ \cite{Goodearl1991,Halperin1968}.
Rank completions of ultramatricial algebras also occur in Elek's work
\cite{Elek2013} on regular closures of amenable group algebras.
More recent work studies the rank topology and unitary representations of unit
groups of continuous regular rings
\cite{Schneider2024,Schneider2026,SchneiderThom2026}.

For a division ring $D$, von Neumann proved that the rank completion
is unchanged, up to isomorphism, when the sequence $(2^k)$ is replaced
by a factor sequence $(n_i)$ with $n_i\mid n_{i+1}$ and $n_i\to\infty$.
Halperin \cite{Halperin1968} presented this proof and extended the result
to unital regular rank rings whose rank completions have no nontrivial
central idempotents.
For finite fields, Anderson \cite[Section~4.2]{Anderson2017} gave another
proof of factor-sequence uniqueness using an approximate extension
property and a back-and-forth argument.
Ara and Claramunt \cite[Theorems~2.2 and 3.2]{AraClaramunt2018}
proved that the completion of an
ultramatricial $D$-ring with respect to a non-discrete extremal
pseudo-rank is isomorphic to $\mathcal M_D$ as a $D$-ring.
They also characterized the continuous factor over a field by local matricial
approximation.

We study rank completions of algebraic direct limits of finite
products of matrix algebras over a unital ring $R$.
We allow arbitrary coefficient rings with a specified Sylvester matrix
rank; neither regularity of $R$ nor regularity or simplicity of its
rank completion is assumed.
When $R$ contains a field in its centre with the same identity,
uniqueness follows from
the field case of Ara and Claramunt by coefficient extension and
the correspondence between Sylvester matrix ranks on $R$ and
$\Mat_q(R)$ for $q\geq1$ \cite[Proposition~1.4]{JaikinLopez2020};
see \Cref{prop:central-field-transfer}.
The theorem also applies to ranks not induced from regular rings, such as
the normalized length rank on $\mathbb Z/4\mathbb Z$, for which
$\rk_\ell(2)=1/2$ \cite[Example~2.1.13]{LopezAlvarez2021}; see also
\cite[Remark~4.6]{HungLi2023}.

A Bratteli diagram $B$ records the block multiplicities of the connecting
maps in such a direct system.  Write $A(B,R)$ for the associated
algebraic direct limit over $R$.
A harmonic function $\alpha$ assigns compatible
probability weights to its matrix blocks.  These weights and the
specified rank on $R$ define a matrix rank on $A(B,R)$.
We denote its rank completion by $\overline A_\alpha(B,R)$.

The main theorem identifies this completion under two conditions on
the weights.  Extremality of $\alpha$ is equivalent to ergodicity of
the associated central measure for the tail equivalence relation.
The second condition, called
\emph{$\alpha$-aperiodicity}, requires the total weight of blocks of
any fixed bounded size to tend to zero along the system.
See \Cref{def:harmonic,def:aperiodic} for the definitions.

\begin{theorem}[Main theorem]\label{thm:main}
Let $R$ be a unital ring equipped with a Sylvester matrix rank function
$\rk$, and let $\alpha$ be a harmonic function on a Bratteli diagram $B$.
If $\alpha$ is extreme and $(B,\alpha)$ is $\alpha$-aperiodic, then
there is a unital $R$-algebra isomorphism
\[
 \overline A_\alpha(B,R)\cong\mathcal M_{R,\rk}
\]
preserving the specified rank on every rectangular matrix space.
\end{theorem}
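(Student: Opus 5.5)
The plan is a back-and-forth (intertwining) argument between two direct systems, with every map rank-preserving up to summable errors. Each map will be built from block-diagonal embeddings with multiplicities. A map $\Mat_n(R)\to\Mat_m(R)$ of the form $x\mapsto\diag(x,\dots,x,0)$ is a non-unital $R$-algebra homomorphism. It scales $\rk$ exactly by the multiplicity. It preserves ranks of all rectangular matrices, since the Sylvester rank is additive on block-diagonal sums. Using only such maps means we never need regularity of $R$ or of the completion, since no idempotent-lifting or Murray--von Neumann comparison inside the completion is required.

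\emph{Step 1 (approximate divisibility from aperiodicity and extremality).} Fix a level $n$ of $B$ and $\varepsilon>0$. I claim there are a level $m>n$ and a $k$ such that two comparisons hold, each with error at most $\varepsilon$ in normalized rank:
\begin{itemize}[leftmargin=*]
\item the unital inclusion $A_n\to A_m$ is approximated by a map factoring through $\Mat_{2^k}(R)$;
\item conversely, $\Mat_{2^k}(R)\to\Mat_{2^{k'}}(R)$ is approximated by a map factoring through $A_m$.
\end{itemize}
For the first, write $\alpha_m(w)$ for the weight of block $w$ at level $m$ and $M_{v,w}$ for the multiplicity of $v\in V_n$ in $w$. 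By extremality (ergodicity of the central measure), for most of the $\alpha$-weight at level $m$ the ratios $M_{v,w}/\dim w$ are close to $\alpha_n(v)/[v]$. This is the standard Vershik--Kerov/ergodic approximation, which I take as known. By aperiodicity, most weight lies on blocks $w$ with $\dim w$ large. On such a block we write $\dim w = 2^k\cdot q_w + r_w$ and distribute multiplicities. Each block $\Mat_{[v]}(R)$ sits inside $\Mat_{2^k}(R)$ with multiplicity $\lfloor 2^k\alpha_n(v)/[v]\rfloor$, and $\Mat_{2^k}(R)$ sits diagonally in $w$ about $\dim w/2^k$ times. The leftover corners have small total weight. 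The converse direction is simpler: a large block $\Mat_{\dim w}(R)$ contains $\Mat_{2^{k}}(R)$ with multiplicity $\lfloor \dim w/2^k\rfloor$ and a remainder of relative size $2^k/\dim w$, which aperiodicity makes small in total weight.

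\emph{Step 2 (correcting to genuine homomorphisms and intertwining).} The approximations from Step 1 are homomorphisms that are not unital; they miss an idempotent of small rank. I will complete them to unital maps by adding an arbitrary unital embedding of the leftover corner whenever sizes allow. Otherwise I will accept non-unital maps and correct at the end: in the completion, unitality follows because the defect ranks are summable. For the diagram to commute, the composites $A_n\to\Mat_{2^k}\to A_m$ must agree with the connecting map up to conjugation. Two block-diagonal embeddings with the same multiplicities differ by a permutation matrix, which is an invertible element over $R$. So after conjugating by permutation matrices, which are isometric for the rank, the triangles commute up to rank error $\varepsilon_j$ with $\sum\varepsilon_j<\infty$.

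\emph{Step 3 (passing to completions).} The zigzag yields maps $\varphi_j$, $\psi_j$ between dense subalgebras of the two completions. These maps are rank-preserving on all rectangular matrices (by blockwise additivity), multiplicative, and $R$-linear, and their compositions are identities up to $\varepsilon_j$. They are therefore Cauchy in the rank metric and extend to mutually inverse isometric unital $R$-algebra isomorphisms. The rectangular statement follows by applying the same maps entrywise to $\Mat_{p\times q}$.

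I expect the main obstacle to be Step 1's quantitative control. We must convert weak convergence of $M_{v,w}/\dim w$ under extremality into uniform multiplicity bounds that are also compatible with the powers of $2$. A further difficulty is that a permutation conjugation fixing the earlier stages is needed at each new stage. I would handle the latter by choosing the conjugating permutations blockwise, so that they commute with the already-constructed image.
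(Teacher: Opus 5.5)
Your proposal is essentially the paper's proof. The common ingredients are: zero-padded floor-multiplicity maps $A_m\to\Mat_q(R)\to A_n\to\Mat_{qr}(R)$ with $q$ a power of $2$; the extremality estimate $\sum_w\alpha_n(w)\|\beta^{(n,m)}_w-\alpha_m\|_1\to0$, which the paper proves directly in \Cref{lem:homogeneous} instead of citing Vershik; aperiodicity to control the remainders; and an approximate intertwining with summable errors (\Cref{prop:summable}), in which unitality is recovered in the limit.

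The paper handles three points in your sketch somewhat differently.
\begin{enumerate}
\item Your maps are not exactly rank-preserving. On $d\times d$ matrices their distortion is $d\eta_{m,q}$ or $d\tau_{n,q}$, which is enough.
\item The multiplicities $a^{(n,m)}_{w,v}$ and $t_wc_{m,q}(v)$ are not equal. The alignment $U$ therefore matches only $\min\{a^{(n,m)}_{w,v},t_wc_{m,q}(v)\}$ copies, and the unmatched copies produce the error.
\item Each permutation is absorbed into the next auxiliary map, via $\sigma^{U,P}=\sigma^U\Ad(P^{-1})$ and $\rho'=\rho^{P'}_{n,qr}$. So nothing has to commute with earlier images, and the worry you flag at the end does not arise.
\end{enumerate}
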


The completion therefore depends only on $(R,\rk)$ under the stated
hypotheses.
One-vertex diagrams give factor-sequence uniqueness.
For the normalized length rank on $\mathbb Z/4\mathbb Z$,
\Cref{ex:nonregular-completion} shows that the completion contains
a nonzero central square-zero element and is therefore
non-regular and non-simple.

For each fixed level, extremality implies that the path distributions
from later levels converge, in weighted average, to the vertex weights
at that level.
The $\alpha$-aperiodicity condition makes the weighted rank of the
remainders from fitting fixed-size matrix blocks tend to zero.
Together, these estimates give maps between finite stages whose
compositions approximate the connecting maps in rank.
Choosing summable errors yields mutually inverse rank-preserving maps
between the completions.
The maps are constructed by copying matrix blocks, inserting zero
blocks, and conjugating by permutation matrices.

For $R=\mathbb C$, let $\mathcal A_B$ be the $C^*$-completion
of $A(B,\mathbb C)$.
This unital AF \(C^{*}\)-algebra is classified by its ordered $K_0$-group with
order unit \cite{Elliott1976}.
The harmonic function $\alpha$ determines a tracial state $\tau$
on $\mathcal A_B$.
For the associated GNS representation $\pi_\tau$, put
$M_\tau=\pi_\tau(\mathcal A_B)''$.
The trace $\tau$ induces a faithful normal tracial state on $M_\tau$,
and the hypotheses of the main theorem give $M_\tau\cong\mathcal R$.

The ring $\mathcal U(M_\tau)$ of closed densely defined operators
affiliated with $M_\tau$ contains $M_\tau$ and is von Neumann regular.
The normal trace on $M_\tau$, extended to matrices using the ordinary
matrix trace, defines the rank of a matrix over $\mathcal U(M_\tau)$
as the trace of its range projection.
Its pullback along $\pi_\tau$ defines a matrix rank on $\mathcal A_B$
that restricts to $\rk_\alpha$ on $A(B,\mathbb C)$.
By \Cref{prop:cstar-rank-affiliated}, the rank completion of
$\mathcal A_B$ is canonically isomorphic to $\mathcal U(M_\tau)$.
The rank completion of $A(B,\mathbb C)$ identifies canonically with
the rank closure of $\pi_\tau(A(B,\mathbb C))$ in $\mathcal U(M_\tau)$
and, under the hypotheses of the main theorem, is isomorphic to
$\mathcal M_{\mathbb C}$.
\Cref{ex:proper-core-rank-closure} shows that this rank closure can
be a proper subring of $\mathcal U(M_\tau)$.

The paper is organized as follows.
\Cref{sec:rank-completions} collects the preliminaries.
\Cref{sec:bratteli-ranks} constructs the weighted Bratteli algebras
and introduces the completion $\mathcal M_{R,\rk}$.
\Cref{sec:uniqueness} proves the uniqueness theorem.
\Cref{sec:consequences} develops its consequences, examines the role of
coefficient rings and ranks, and studies corner isomorphisms.
\Cref{sec:af-completions} compares $C^*$-completions, weak operator
closures, and rank completions through affiliated-operator rings.

\section*{Acknowledgments}
The author thanks Prof. Hanfeng Li for posing the factor-sequence question that
initiated this work.

\paragraph{\textbf{AI Use Statement.}}
This work was completed with the assistance of ChatGPT
(GPT-5.6 Sol and GPT-6 Astra) in developing and checking
the mathematical arguments.
The author takes full responsibility for all mathematical
statements, proofs, references, and editorial decisions
in the final version.

\section{Preliminaries}\label{sec:rank-completions}

Throughout this paper, the identity element of a unital ring \(R\) is denoted by \(1_R\).
For positive integers \(m,n\), we write \(\Mat_{m\times n}(R)\) for the set of
all \(m\times n\) matrices over \(R\), and put \(\Mat_n(R)=\Mat_{n\times
  n}(R)\).
The identity matrix in \(\Mat_n(R)\) is denoted by \(1_n\); by convention,
\(1_0\) denotes the \(0\times0\) empty matrix.
For $t\in\mathbb R$, $\lfloor t\rfloor$ denotes the greatest integer
not exceeding $t$.

For an abelian group $V$ and an integer $n\geq1$, we write $V^n$ for its
$n$-fold direct product.
When the distinction between rows and columns matters, we use \(V^{1\times n}\)
and \(V^{n\times1}\) for row and column vectors, respectively, with entries in
\(V\).

For matrices $A=(a_{ij})\in\Mat_{r\times s}(R)$ and
$B=(b_{kl})\in\Mat_{m\times n}(R)$, their \emph{Kronecker product} is
\[
A\otimes B=(a_{ij}B)_{i,j}\in\Mat_{rm\times sn}(R),
\qquad
(A\otimes B)_{(i,k),(j,l)}=a_{ij}b_{kl},
\]
where the row and column index pairs are ordered lexicographically.
Interchanging the indices in each row pair and each column pair gives the
alternative block form
\[
 P(A\otimes B)Q=(A b_{kl})_{k,l},
\]
where $P,Q$ are the corresponding row and column permutation matrices.
The resulting matrix equals $B\otimes A$ when every entry of $A$
commutes with every entry of $B$; in general the order of multiplication must be retained.
Identifying $a\in R$ with a $1\times1$ matrix, we have
\[
 1_d\otimes a=\diag(\underbrace{a,\ldots,a}_{d\text{ copies}}),\qquad
 1_d\otimes B=\underbrace{B\oplus\cdots\oplus B}_{d\text{ copies}}.
\]
We interpret $1_0\otimes B$ as an empty block.

\subsection{Sylvester matrix rank functions}
\label{subsec:sylvester-ranks}

Sylvester matrix rank functions were introduced by Malcolmson
\cite{Malcolmson1980} to study homomorphisms to division rings.
For further background, see
\cite{JaikinZapirain2019,JiangLi2021,Li2021} and
\cite[Part~I, Chapter~7]{Schofield1985}.

\Needspace{12\baselineskip}
\begin{definition}\label{def.MRank}
Let $R$ be a unital ring.
A \emph{Sylvester matrix rank function} on $R$ is an
$\mathbb R_{\geq0}$-valued function $\rk$ on the set of all finite rectangular
matrices over $R$ satisfying the following conditions:
\begin{enumerate}[label=(SM\arabic*)]
\item\label{item:sm1} $\rk(0)=0$ and $\rk(1_R)=1$;
\item\label{item:sm2} $\rk(AB)\leq\min\Set*{\rk(A),\rk(B)}$ for any matrices \(A\) and \(B\) of compatible sizes;
\item\label{item:sm3} \( \rk\begin{pmatrix}A&0\\0&B\end{pmatrix}
 =\rk(A)+\rk(B)\) for any matrices \(A\) and \(B\);
\item\label{item:sm4} \(\rk\begin{pmatrix}A&C\\0&B\end{pmatrix}
\geq\rk(A)+\rk(B)\) for matrices $A,B,C$ of appropriate sizes.
\end{enumerate}
\end{definition}

A Sylvester matrix rank function $\rk$ is \emph{faithful} if
$\rk(A)>0$ for every nonzero rectangular matrix $A$ over $R$.

For a division ring $D$, the ordinary matrix rank is denoted by
\[
 \operatorname{rank}_D(X):=\dim_D\bigl(XD^{s\times1}\bigr),
 \qquad X\in\Mat_{r\times s}(D),
\]
where dimension is taken over $D$ on the right.

Normalization \ref{item:sm1} and block additivity \ref{item:sm3} give
$\rk(1_n)=n$.
For $A\in\Mat_{m\times n}(R)$, the product axiom \ref{item:sm2} applied to $A=1_mA=A1_n$
therefore gives
\[
 0\leq\rk(A)\leq\min\{m,n\}.
\]
Applying \ref{item:sm2} to $PAQ$ and $A=P^{-1}(PAQ)Q^{-1}$ shows that
$\rk(PAQ)=\rk(A)$ whenever $P$ and $Q$ are invertible matrices of the
appropriate sizes.  In particular, $\rk(-A)=\rk(A)$.

For $A,B\in\Mat_{m\times n}(R)$, the factorization
\[
 A+B
 =\begin{pmatrix}1_m&1_m\end{pmatrix}
  \begin{pmatrix}A&0\\0&B\end{pmatrix}
  \begin{pmatrix}1_n\\1_n\end{pmatrix}
\]
gives $\rk(A+B)\leq\rk(A)+\rk(B)$.
Applying this to $A=(A-B)+B$ and $B=(B-A)+A$ yields
\[
 \lvert\rk(A)-\rk(B)\rvert\leq\rk(A-B).
\]

For $A=(a_{ij})\in\Mat_{m\times n}(R)$, multiplication by coordinate rows
and columns, together with subadditivity, gives
\[
 \rk(a_{ij})\leq\rk(A)\leq\sum_{i,j}\rk(a_{ij}).
\]
In particular, faithfulness is equivalent to $\rk(a)>0$ for every nonzero $a\in R$.

A ring carrying a Sylvester matrix rank has \emph{invariant basis number}
(IBN): $R^m\cong R^n$ as right $R$-modules implies $m=n$
\cite[Section~3]{JiangLi2021}.  Indeed, inverse module isomorphisms give
$A\in\Mat_{m\times n}(R)$ and $B\in\Mat_{n\times m}(R)$ with
$AB=1_m$ and $BA=1_n$, so
\[
 m=\rk(AB)\leq\rk(A)\leq n,\qquad
 n=\rk(BA)\leq\rk(B)\leq m.
\]

Rank is also additive on orthogonal idempotents.  For orthogonal
idempotents $e,f\in\Mat_n(R)$, set
$U=\begin{pmatrix}e&f\end{pmatrix}$ and
$V=\begin{pmatrix}e\\f\end{pmatrix}$.
Then $UV=e+f$, $VU=\diag(e,f)$, and
$UV=U(VU)V$, $VU=V(UV)U$.
The product axiom gives equality of the ranks of $UV$ and $VU$;
block additivity therefore gives $\rk(e+f)=\rk(e)+\rk(f)$.
Induction yields
\[
 \rk(e_1+\cdots+e_k)=\sum_{i=1}^k\rk(e_i)
\]
for pairwise orthogonal idempotents.  In particular, for $e^2=e\in R$,
\begin{equation}\label{eq:idempotent-complement-rank}
 \rk(1_R-e)=1-\rk(e).
\end{equation}

\begin{remark}
\label{rem:pseudo-rank-dimension}
A \emph{pseudo-rank function} on a unital ring $R$ is a map
$N\colon R\to[0,1]$ satisfying:
\begin{enumerate}
    \item \(N(0)=0\) and \(N(1_R)=1\);
    \item \(N(a+b)\leq N(a)+N(b)\) for all \(a,b\in R\);
    \item \(N(ab)\leq\min\{N(a),N(b)\}\) for all \(a,b\in R\);
    \item \(N(e+f)=N(e)+N(f)\) for all \(e,f\in R\) with \(e^2=e\), \(f^2=f\)
  and \(ef=fe=0\).
\end{enumerate}
This is the formulation for general unital rings in
\cite[Definition~2.1]{AraClaramunt2018}.  For regular rings, it agrees
with Goodearl's definition \cite[Chapter~16, p.~226]{Goodearl1991}:
subadditivity follows from the other axioms by
\cite[Proposition~16.1(d)]{Goodearl1991}.
In that terminology, a pseudo-rank $N$ is a \emph{rank function} if
$N(a)>0$ for every nonzero $a\in R$.
The preceding properties show that the scalar restriction
$N(a)=\rk(a)$ is a pseudo-rank; the associated rank pseudometric is
$d_N(a,b)=N(a-b)$.

Suppose that $R$ is von Neumann regular, that is, for each $a\in R$
there is $b\in R$ with $aba=a$.
Then every pseudo-rank $N$ extends uniquely to a Sylvester
matrix rank $\rk_N$ \cite[Proposition~1.3.9]{LopezAlvarez2021}.
Equivalently, $N$ determines a unique normalized
dimension $\dim_N$ on finitely generated projective right $R$-modules
\cite[Proposition~16.8]{Goodearl1991}.
It is nonnegative, invariant under isomorphisms, and satisfies
\[
 \dim_N(R)=1,\qquad
 \dim_N(P\oplus Q)=\dim_N(P)+\dim_N(Q).
\]
The correspondences are given by
\begin{equation}\label{eq:pseudo-rank-projective-dimension}
 N(a)=\dim_N(aR),\qquad
 \rk_N(X)=\dim_N(XR^{s\times1})
 \quad\bigl(X\in\Mat_{r\times s}(R)\bigr).
\end{equation}
Regularity ensures that these image modules are finitely generated
projective.
Thus the scalar and matrix formulations give the same rank completion
in the regular case.
\end{remark}

A \emph{rank ring} $(R,\rk)$ is a unital ring with a specified Sylvester
matrix rank function.
We assume neither regularity of $R$ nor
faithfulness of $\rk$.
For positive integers $r,s,p$, we use the
canonical identification
\[
 \Mat_{r\times s}(\Mat_p(R))
 \cong\Mat_{rp\times sp}(R).
\]
For $X\in\Mat_{r\times s}(\Mat_p(R))$, define
\begin{equation}\label{eq:normalized-matrix-rank}
 \rk_p(X):=\frac1p\rk(X).
\end{equation}
Then $\rk_p$ is a Sylvester matrix rank function on $\Mat_p(R)$.

Let $f\colon S\to T$ be a possibly nonunital ring homomorphism.
For $A=(a_{ij})\in\Mat_{r\times s}(S)$, we set
\[
 f(A):=(f(a_{ij}))\in\Mat_{r\times s}(T).
\]
If $f$ is unital and $\rk_T$ is a Sylvester matrix rank function on $T$, then
\[
 (f^*\rk_T)(A):=\rk_T(f(A))
\]
defines a Sylvester matrix rank function on $S$, called the \emph{pullback} of
$\rk_T$.
For rank rings $(S,\rk_S)$ and $(T,\rk_T)$, we call $f$
\emph{rank-preserving} if, for all $r,s\geq1$,
\[
 \rk_T(f(A))=\rk_S(A),
 \qquad \forall A\in\Mat_{r\times s}(S).
\]

\medskip
For a rank ring $(R,\rk)$, define
\[
 d_{\rk}(x,y):=\rk(x-y)\qquad(x,y\in R),
\]
which is a pseudometric on \(R\).
The set
\[
 \ker(\rk)=\Set*{a\in R\given\rk(a)=0}
\]
is a two-sided ideal.
The quotient $R/\ker(\rk)$ inherits a faithful Sylvester matrix rank, and $d_{\rk}$ induces its metric.
Subadditivity and the product axiom also give
\[
 \begin{aligned}
 d_{\rk}(x+y,x'+y')&\leq d_{\rk}(x,x')+d_{\rk}(y,y'),\\
 d_{\rk}(xy,x'y')&\leq d_{\rk}(x,x')+d_{\rk}(y,y').
 \end{aligned}
\]
Thus sums and products of Cauchy sequences define ring operations on the
metric completion, independently of the representatives.
The completion of $R/\ker(\rk)$ in this metric is called the
\emph{rank completion} of $R$.  We denote it by $\overline{R_{\rk}}$,
or by $\overline R$ when the rank is clear.

For a ring $S$ with a faithful Sylvester matrix rank \(\rk\), the bounds
\begin{equation}\label{eq:rank-continuity-bounds}
 \begin{aligned}
 |\rk(X)-\rk(Y)|&\leq\rk(X-Y),\\
 \max_{i,j}\rk(x_{ij}-y_{ij})
 &\leq\rk(X-Y)\leq\sum_{i,j}\rk(x_{ij}-y_{ij})
 \end{aligned}
\end{equation}
for $X,Y\in\Mat_{r\times s}(S)$ give a unique continuous extension
to matrices over $\overline S$:
\[
 \overline{\rk}(X):=\lim_k\rk(X_k),
 \qquad X_k\in\Mat_{r\times s}(S),\quad X_k\longrightarrow X
 \text{ entrywise}.
\]
The rank axioms pass to limits by continuity of matrix operations.
The extension is faithful by the entrywise lower bound and the identity
$\overline{\rk}(x)=d_{\rk}(x,0)$ on $\overline S$.
We henceforth omit the bar on rank.

Likewise, a rank-preserving homomorphism $f\colon S\to T$ between rings
with faithful ranks extends uniquely to a rank-preserving homomorphism
$\overline f\colon\overline S\to\overline T$, given by
\[
 \overline f\bigl(\lim_k x_k\bigr):=\lim_k f(x_k).
\]
Indeed, $f$ is an isometry, so this extension is well defined and unique
by density; continuity of ring operations and matrix ranks preserves
the homomorphism and rank-preservation identities.
For nonfaithful ranks, rank preservation gives
$f(\ker\rk_S)\subseteq\ker\rk_T$, and the same argument applies to
the induced map $S/\ker\rk_S\to T/\ker\rk_T$.

\subsection{Bratteli diagrams}
\label{subsec:bratteli-diagrams}

Bratteli \cite{Bratteli1972} introduced these diagrams to describe
inductive limits of finite-dimensional $C^*$-algebras.  Their connections
with dimension groups and topological dynamics are developed in
\cite{HermanPutnamSkau1992}.
The definition and terminology for Bratteli diagrams follow
\cite[Section~2.2]{BezuglyiKarpel2016}, with $E_n$ denoting the edges
from $V_{n-1}$ to $V_n$.
Unlike the standing assumption in that reference, we allow the infinite
path space $X_B$ to have isolated points.  This includes one-vertex diagrams
whose path spaces are finite.

\begin{definition}[Bratteli diagram]\label{def:bratteli}
A \emph{Bratteli diagram} is an infinite graph $B=(V,E)$ whose \emph{vertex} and \emph{edge}
sets are partitioned as
\[
 V=\bigsqcup_{n\geq0}V_n,
 \qquad
 E=\bigsqcup_{n\geq1}E_n,
\]
where $V_0=\Set*{v_0}$, every $V_n$ and $E_n$ is finite and non-empty, and there
are \emph{source} and \emph{range} maps $s,r\colon E\to V$ satisfying
\[
 s(E_n)=V_{n-1},
 \qquad
 r(E_n)=V_n
 \qquad(n\geq1).
\]
\end{definition}

Given a Bratteli diagram \(B=(V,E)\), every vertex emits an edge, and every
vertex other than $v_0$ receives an edge.
The set $V_n$ is the $n$th level of $B$.

A \emph{finite path} from level $m$ to level $n>m$ is a sequence
$(e_{m+1},\ldots,e_n)$ with $e_i\in E_i$ and
$r(e_i)=s(e_{i+1})$ for $m<i<n$.
An \emph{infinite path} starting at level $m$ is a sequence
$(e_i)_{i\geq m+1}$ with $e_i\in E_i$ and
$r(e_i)=s(e_{i+1})$ for $m<i$.
For $0 \leq m < n$, let $E_{m,n}$ denote the set of all paths from
$V_m$ to $V_n$.
Explicitly,
\[
E_{m,n} = \Set*{(e_{m+1}, e_{m+2}, \dots, e_n) \given
e_i \in E_i, \; m < i \leq n, \;
r(e_i) = s(e_{i+1}), \; m < i < n}.
\]
Thus $E_{n-1,n}=E_n$ for every $n\geq1$.
For $p = (e_{m+1}, e_{m+2}, \dots, e_n)$ in $E_{m,n}$, we let $s(p) =
s(e_{m+1})$ and $r(p) = r(e_n)$.
If $p\in E_{l,m}$ and $q\in E_{m,n}$ satisfy $r(p)=s(q)$, their \emph{concatenation}
is denoted by $pq\in E_{l,n}$.

For $v\in V_n$, let $1_v$ denote the \emph{empty path} at $v$.
We put $E_{n,n}=\Set*{1_v \given v\in V_n}$ and set
$s(1_v)=r(1_v)=v$.  Concatenation extends to empty paths by
$1_{s(p)}p=p=p1_{r(p)}$.

Let
\[
X_B = \Set*{(e_1, e_2, \dots) \given
e_n \in E_n, \; r(e_n) = s(e_{n+1}), \; n \geq 1},
\]
denote the set of infinite paths starting at the vertex \(v_{0}\).
We endow \(X_B\) with the topology generated by cylinder sets
\[
C(p) = \Set*{x \in X_B \given (x_1, x_2, \dots, x_n) = p}
\]
where $n \geq 1$ and $p\in E_{0,n}$.
The path space is a closed subspace of the product of the finite
discrete sets $E_n$, and every finite path extends to an infinite path.
Thus $X_B$ is non-empty, compact and metrizable.
The cylinders form
a countable clopen basis and generate the Borel $\sigma$-algebra
of $X_B$.

For $0\leq m\leq n$, $v\in V_m$, and $w\in V_n$, set
\[
 a_{w,v}^{(n,m)}
 =\bigl|\Set*{ p\in E_{m,n}  \given s(p)=v,\ r(p)=w }\bigr|.
\]
Here $|A|$ denotes the cardinality of a set $A$.
Thus $a_{w,v}^{(n,n)}=\delta_{w,v}$.
Define the \emph{incidence matrix} from \(V_{m}\) to $V_{n}$ by
\[
 F_{n,m}
 =\bigl(a_{w,v}^{(n,m)}\bigr)_{w\in V_n,\,v\in V_m}
 \in \Mat_{|V_n|\times |V_m|}(\setZ).
\]
Fix an ordering of each finite vertex set $V_n$, used for incidence
matrices and block sums indexed by vertices throughout the paper.  The rows of $F_{n,m}$ are indexed by $V_n$, and its columns by
$V_m$.
In particular, $F_{n,n} = 1_{|V_n|}$ is the identity matrix.
We write $F_n=F_{n+1,n}$ for the one-step incidence matrix; this agrees with
the convention of \cite[Section~2.2]{BezuglyiKarpel2016}.
For $v\in V_n$ and $w\in V_{n+1}$, we abbreviate $a_{w,v}=a_{w,v}^{(n+1,n)}$.

Every path from $u\in V_\ell$ to $w\in V_n$ has a unique vertex
$v\in V_m$ at level $m$ and a unique decomposition into a path from $u$
to $v$ and one from $v$ to $w$.  Hence
\[
 a_{w,u}^{(n,\ell)}
 =\sum_{v\in V_m}a_{w,v}^{(n,m)}a_{v,u}^{(m,\ell)}.
\]
This is the entrywise formula for
$F_{n,\ell}=F_{n,m}F_{m,\ell}$, including the endpoint cases in view of
the empty-path convention.

For $v\in V_n$, let
\[
 p_n(v)=a_{v,v_0}^{(n,0)}
\]
be the number of paths from the root to $v$.
Every nonroot vertex receives an edge, so induction on $n$ gives a
path from $v_0$ to every $v\in V_n$.  Thus $p_n(v)>0$, and the
empty-path convention gives $p_0(v_0)=1$.  Write
\[
 p^{(n)}=(p_n(v))_{v\in V_n}\in\setZ^{|V_n|\times1},
\]
where the entries are arranged as a column vector in the same order as the
rows of $F_{n,m}$.
For every $0\leq m\leq n$ we then have
\begin{equation}\label{eq:path-count}
 p^{(n)}=F_{n,m}p^{(m)},
 \qquad\text{equivalently}\qquad
 p_n(w)=\sum_{v\in V_m}a_{w,v}^{(n,m)}p_m(v)
 \quad(w\in V_n).
\end{equation}
In particular, $p^{(n+1)}=F_np^{(n)}$.

The \emph{tail equivalence relation} $\mathcal E_B$ on $X_B$ is given by
\[
  x\mathrel{\mathcal E_B}y
  \quad\Longleftrightarrow\quad
  \text{there is }N\text{ such that }x_j=y_j\text{ for every }j\geq N.
\]
This is the tail-equivalence convention of
\cite[Section~2.2]{BezuglyiKarpel2016}; its equivalence class at $x$ is denoted
by $[x]_{\mathcal E_B}$.

\subsection{Direct limits}

We use \emph{direct systems} $(R_n,\phi_{m,n})_{m\geq n\geq0}$ of unital
rings and unital ring homomorphisms $\phi_{m,n}\colon R_n\to R_m$, with
\[
 \phi_{n,n}=\id_{R_n},\qquad
 \phi_{m,n}=\phi_{m,k}\circ\phi_{k,n}\quad(m\geq k\geq n).
\]
Writing $\phi_n=\phi_{n,n-1}$ for $n\geq1$, we also denote this system
by $(R_n,\phi_n)$ and display it as
\[
 R_0\xrightarrow{\phi_1}R_1\xrightarrow{\phi_2}R_2\longrightarrow\cdots,
 \qquad \phi_{m,n}=\phi_m\circ\cdots\circ\phi_{n+1}\quad(m>n).
\]
Unitality here concerns the connecting maps; the auxiliary maps used in
\Cref{sec:uniqueness} need not be unital.
The \emph{algebraic direct limit} is
\[
 R_\infty=\varinjlim(R_n,\phi_{m,n})
 =\left(\bigsqcup_{n\geq0}(R_n\times\{n\})\right)\big/\sim,
\]
where
\[
 (x,n)\sim(y,m)
 \iff \phi_{k,n}(x)=\phi_{k,m}(y)\text{ for some }k\geq n,m.
\]
Write $[x,n]$ for the class of $(x,n)$.  The canonical maps satisfy
\[
 \phi_{\infty,n}\colon R_n\to R_\infty,\quad x\mapsto[x,n],
 \qquad
 \phi_{\infty,m}\phi_{m,n}=\phi_{\infty,n}\quad(m\geq n).
\]

Every finite family in $R_\infty$ has representatives at a common stage,
where its sums and products are computed.

\begin{proposition}\label{prop:direct-limit-rank}
Suppose each $R_n$ carries a Sylvester matrix rank $\rk_{R_n}$
preserved by the connecting maps.
Then $R_\infty$ carries the unique Sylvester matrix rank satisfying
\[
 \rk_{R_\infty}(\phi_{\infty,n}(X))=\rk_{R_n}(X),
 \qquad \forall X\in\Mat_{r\times s}(R_n).
\]
It is faithful if every $\rk_{R_n}$ is faithful.
\end{proposition}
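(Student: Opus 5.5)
The plan is to define the rank by lifting matrices to a common stage, then check in turn that it is well defined, satisfies the axioms, is unique, and is faithful when each $\rk_{R_n}$ is. Fix $X\in\Mat_{r\times s}(R_\infty)$. Since $X$ has finitely many entries, there are a stage $n$ and a matrix $Y\in\Mat_{r\times s}(R_n)$ with $\phi_{\infty,n}(Y)=X$ entrywise. Put
\[
 \rk_{R_\infty}(X):=\rk_{R_n}(Y).
\]

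The first step is well-definedness. Suppose $Y\in\Mat_{r\times s}(R_n)$ and $Y'\in\Mat_{r\times s}(R_m)$ both represent $X$. By the definition of $\sim$, each entry pair agrees after pushing to some later stage. Taking the maximum over the finitely many entries gives one $k\geq n,m$ with $\phi_{k,n}(Y)=\phi_{k,m}(Y')$. Rank preservation by the connecting maps then gives
\[
 \rk_{R_n}(Y)=\rk_{R_k}(\phi_{k,n}(Y))=\rk_{R_k}(\phi_{k,m}(Y'))=\rk_{R_m}(Y').
\]
The displayed identity $\rk_{R_\infty}(\phi_{\infty,n}(X))=\rk_{R_n}(X)$ then holds by construction. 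It also forces uniqueness, because every matrix over $R_\infty$ lies in the image of some $\phi_{\infty,n}$.

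The second step is to verify \ref{item:sm1}--\ref{item:sm4}. Any finite collection of matrices appearing in one axiom, for instance $A,B$ in \ref{item:sm2}, or $A,B,C$ in \ref{item:sm4}, can be lifted to a single stage $n$. This uses the remark that finite families have representatives at a common stage, after pushing forward by connecting maps. The map $\phi_{\infty,n}$ is a ring homomorphism applied entrywise, so it commutes with matrix products, block-diagonal sums, and block upper-triangular assembly. It also sends $0$ to $0$, and it sends $1_{R_n}$ to $1_{R_\infty}$ because the connecting maps are unital. Each axiom for $\rk_{R_\infty}$ is therefore the same axiom for $\rk_{R_n}$ applied to the lifts.

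The last step is faithfulness. Assume every $\rk_{R_n}$ is faithful, and let $x=[y,n]\neq0$. Then $y\neq0$, since otherwise $x=[0,n]=0$. Hence
\[
 \rk_{R_\infty}(x)=\rk_{R_n}(y)>0.
\]
By the scalar criterion in the preliminaries, positivity on nonzero scalars is equivalent to faithfulness. Equivalently, one can apply the same argument directly to a nonzero matrix. The only mildly delicate point in the whole argument is choosing a common stage for several entries at once in the well-definedness step, and that is handled by taking a maximum over finitely many indices.
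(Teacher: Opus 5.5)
Your proposal is correct and follows essentially the same route as the paper: well-definedness comes from a common later stage together with rank preservation by the connecting maps, and the axioms, uniqueness, and faithfulness are all checked on representatives at a common stage. Taking the maximum over the finitely many entries simply spells out the matrix version of the common-stage criterion, which the paper's proof uses without comment.
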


\begin{proof}
For \(X\in\Mat_{r\times s}(R_n)\) and \(Y\in\Mat_{r\times s}(R_m)\), we have
\[
 \begin{aligned}
 \phi_{\infty,n}(X)=\phi_{\infty,m}(Y)
 &\implies \phi_{k,n}(X)=\phi_{k,m}(Y)\quad\text{for some }k\geq n,m,\\
 &\implies \rk_{R_n}(X)=\rk_{R_k}(\phi_{k,n}(X))
   =\rk_{R_m}(Y).
 \end{aligned}
\]
Thus the formula is well defined.  The rank axioms and, when applicable,
faithfulness are checked on representatives at a common stage;
these representatives also give uniqueness.
\end{proof}

\begin{proposition}\label{prop:cofinal-direct-limit}
For any infinite subsequence $n_0<n_1<\cdots$, the canonical map
\[
 \eta\colon\varinjlim_i(R_{n_i},\phi_{n_{i+1},n_i})
 \xrightarrow{\ \cong\ }R_\infty,
 \qquad [x,i]\longmapsto[x,n_i],
\]
is a ring isomorphism.  Under the hypotheses of
\Cref{prop:direct-limit-rank}, it preserves rank and induces an
isomorphism of rank completions.
\end{proposition}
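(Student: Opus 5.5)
We first check that $\eta$ is well defined and a ring homomorphism. Write $S_i=R_{n_i}$ and $\psi_{j,i}=\phi_{n_j,n_i}$ for $j\geq i$. Since $\psi_{j,i}\psi_{i,l}=\psi_{j,l}$, the $S_i$ form a direct system. Suppose $[x,i]=[y,j]$ in $S_\infty$. Then $\psi_{k,i}(x)=\psi_{k,j}(y)$ for some $k$, that is, $\phi_{n_k,n_i}(x)=\phi_{n_k,n_j}(y)$, which is exactly the relation $(x,n_i)\sim(y,n_j)$ in $R_\infty$. So $\eta$ is well defined. It is additive, multiplicative and unital, because sums and products of finitely many elements are computed at a common stage $i$, and there $\eta$ is induced by the identity map of $R_{n_i}$.

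Next we show $\eta$ is bijective. For injectivity, suppose $\eta[x,i]=0$. Then $\phi_{m,n_i}(x)=0$ for some $m\geq n_i$. Since $n_k\to\infty$, we can choose $k$ with $n_k\geq m$, and then $\psi_{k,i}(x)=\phi_{n_k,m}\phi_{m,n_i}(x)=0$, so $[x,i]=0$. For surjectivity, take a class $[x,n]\in R_\infty$ and choose $i$ with $n_i\geq n$. Then $[x,n]=[\phi_{n_i,n}(x),n_i]=\eta[\phi_{n_i,n}(x),i]$.

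For rank preservation, assume the hypotheses of \Cref{prop:direct-limit-rank}. The maps $\psi_{j,i}$ are composites of rank-preserving maps and so preserve rank. Hence \Cref{prop:direct-limit-rank} gives a rank on $S_\infty$ with $\rk(\psi_{\infty,i}(X))=\rk_{R_{n_i}}(X)$. Any matrix over $S_\infty$ has a representative $X$ at some common stage $i$, and $\eta$ sends it to $\phi_{\infty,n_i}(X)$, which has the same rank $\rk_{R_{n_i}}(X)$. So $\eta$ preserves rank on all rectangular matrices.

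Finally, a bijective rank-preserving homomorphism has a rank-preserving inverse. By the extension paragraph of \S\ref{subsec:sylvester-ranks}, applied to the induced maps on quotients by $\ker\rk$, both $\eta$ and $\eta^{-1}$ extend to the rank completions. These extensions are mutually inverse by density and uniqueness of continuous extensions. No step is a genuine obstacle; the only point needing care is using $n_i\to\infty$ for cofinality, both in the injectivity argument and in the surjectivity argument.
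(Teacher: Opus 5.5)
Your proof is correct and follows essentially the same route as the paper. Cofinality of $(n_i)$ gives bijectivity; the paper writes down the inverse $[x,n]\mapsto[\phi_{n_i,n}(x),i]$ for $n_i\geq n$ (your surjectivity formula) rather than checking injectivity separately. Rank preservation on stage representatives then lets $\eta$ and its inverse extend to mutually inverse rank-preserving maps of the completions.
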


\begin{proof}
Cofinality gives an inverse
\[
 \eta^{-1}([x,n])=[\phi_{n_i,n}(x),i]\qquad(n_i\geq n),
\]
independent of the representative and of $i$ by the common-stage
criterion.  Both maps preserve operations and the specified ranks on
stage representatives, so they extend to mutually inverse
rank-preserving maps on the completions.
\end{proof}

\subsection{\texorpdfstring{$C^*$}{C*}-algebras}
\label{subsec:cstar-preliminaries}

A \emph{$C^*$-algebra} is a complex Banach $*$-algebra $\mathcal A$
satisfying $\|a^*a\|=\|a\|^2$ for every $a\in\mathcal A$.
An element is \emph{positive} if it has the form $b^*b$ for some \(b\in\mathcal A\).
A \emph{state} on a unital $C^*$-algebra is a linear functional
$\tau\colon\mathcal A\to\mathbb C$ with $\tau(1)=1$ and
$\tau(a^*a)\geq0$ for all $a$.
It is \emph{tracial} if $\tau(ab)=\tau(ba)$ for all $a,b$, and
\emph{faithful} if $\tau(a^*a)=0$ implies $a=0$.
A \emph{$*$-representation} of $\mathcal A$ on a Hilbert space $H$ is a
$*$-homomorphism $\pi\colon\mathcal A\to\mathcal B(H)$, where
$\mathcal B(H)$ is the \(C^{*}\)-algebra of bounded linear operators on $H$.

Injective $*$-homomorphisms between $C^*$-algebras are isometric.
For a direct system $(\mathcal A_n,\phi_n)$ with injective
$*$-homomorphisms, the algebraic direct limit has a well-defined norm
\[
 \|\phi_{\infty,n}(a)\|=\|a\|_{\mathcal A_n}.
\]
The norm completion of this algebraic direct limit is the
\emph{$C^*$-algebraic inductive limit}.
A separable $C^*$-algebra is \emph{approximately finite-dimensional}, or
\emph{AF}, if it is the norm closure of an increasing union of
finite-dimensional $C^*$-subalgebras \cite{Bratteli1972}.
An infinite-dimensional unital $C^*$-algebra is \emph{uniformly
hyperfinite} (UHF) if it is the norm closure of an increasing union
of unital full matrix subalgebras \cite{Glimm1960}.

For a unital $C^*$-algebra $\mathcal A$ with tracial state $\tau$,
let $(\pi_\tau,H_\tau,\xi_\tau)$ be its GNS representation and put
\begin{equation}\label{eq:gns-normal-trace}
 M_\tau=\pi_\tau(\mathcal A)'',\qquad
 \widetilde\tau(T)=\langle T\xi_\tau,\xi_\tau\rangle.
\end{equation}
Then $\widetilde\tau$ is a faithful normal tracial state and
\[
 \widetilde\tau(\pi_\tau(a))=\tau(a)\qquad(a\in\mathcal A).
\]
The vector state $\widetilde\tau$ is normal, and the trace identity
extends to $M_\tau$ by separate ultraweak continuity.
The bounded right multiplication operators lie in $M_\tau'$
and have a dense orbit of $\xi_\tau$.
Thus $\xi_\tau$ is separating for $M_\tau$, so
$\widetilde\tau$ is faithful.
Consequently, $M_\tau$ is finite, even when $\tau$ is not faithful on
$\mathcal A$; see \cite[Section~2.4]{JiangLi2021}.

Since $\pi_\tau(\mathcal A)$ is ultraweakly dense in $M_\tau$,
$\widetilde\tau$ is the unique normal tracial state on $M_\tau$
whose pullback along $\pi_\tau$ is $\tau$.

Let $\mathcal A$ be a unital $C^*$-algebra and $\tau$ a tracial state.
For $A = (a_{ij})\in\Mat_k(\mathcal A)$, put
\[
 \tau_k(A):=(\operatorname{Tr}_k\otimes\tau)(A)
 =\sum_{j=1}^k\tau(a_{jj}),
\]
where $\operatorname{Tr}_k$ is the ordinary (unnormalized) matrix trace.
On $\Mat_k(\mathbb C)$, write
$\operatorname{tr}_k=k^{-1}\operatorname{Tr}_k$ for the normalized trace.
For
$X\in\Mat_{r\times s}(\mathcal A)$, let
$|X|=(X^*X)^{1/2}\in\Mat_s(\mathcal A)$ and define
\begin{equation}\label{eq:trace-root-rank}
 \rk_\tau(X):=\lim_{k\to\infty}\tau_s(|X|^{1/k}).
\end{equation}

By \cite[Proposition~2.4]{JiangLi2021}, the limit in
\eqref{eq:trace-root-rank} exists and defines a Sylvester matrix
rank function on $\mathcal A$.
For a projection $e\in\Mat_k(\mathcal A)$, that is,
$e=e^*=e^2$, functional calculus gives
$|e|^{1/j}=e$ for every $j\geq1$, and hence
\[
  \rk_\tau(e)=\tau_k(e).
\]

\section{\texorpdfstring{$R$}{R}-algebras associated with Bratteli diagrams and their ranks}
\label{sec:bratteli-ranks}

Throughout this section and \Cref{sec:uniqueness}, $(R,\rk)$ is a fixed
rank ring.

\subsection{The associated ultramatricial \texorpdfstring{$R$}{R}-algebra}
\label{subsec:associated-algebra}

An \emph{$R$-algebra} is a ring $S$ with a specified unital
homomorphism $\iota_S\colon R\to S$; the image is not required to be central.
An $R$-algebra homomorphism is a ring homomorphism $f\colon S\to T$
satisfying $f\iota_S=\iota_T$; in particular, it is unital.
The diagonal homomorphism
\[
 \iota_{\Mat_p(R)}\colon R\longrightarrow\Mat_p(R),\qquad
 r\longmapsto1_p\otimes r,
\]
defines the standard $R$-algebra structure on $\Mat_p(R)$.

An $R$-algebra is \emph{matricial} if it is isomorphic as an $R$-algebra to
$\bigoplus_{j=1}^t\Mat_{d_j}(R)$ for some positive integers
$t,d_1,\ldots,d_t$, with structure map
\[
 \iota\colon R\longrightarrow\bigoplus_{j=1}^t\Mat_{d_j}(R),
 \qquad r\longmapsto\bigl(1_{d_j}\otimes r\bigr)_{j=1}^t.
\]
It is \emph{ultramatricial} if it is isomorphic as an \(R\)-algebra to an
algebraic direct limit of a sequence of matricial \(R\)-algebras.
These definitions extend the notions of matricial and ultramatricial
$D$-rings in \cite[Section~3]{AraClaramunt2018} from division rings
to arbitrary unital coefficient rings.

Let $B=(V,E)$ be a Bratteli diagram, with notation as in
\Cref{subsec:bratteli-diagrams}.  For each $n\geq0$, define the finite
product of matrix rings
\begin{equation}\label{eq:level-algebra}
 A_n(B,R)=\bigoplus_{v\in V_n}\Mat_{p_n(v)}(R).
\end{equation}
Equip $A_n(B,R)$ with the componentwise $R$-algebra structure above,
and denote its structure map by $\iota_n$.
In particular, $A_0(B,R)=R$.  We abbreviate $A_n(B,R)$ to $A_n$ when the
diagram \(B\) and the ring \(R\) are fixed.

Fix a total order on each edge set $E_n$.
For $x=(x_v)_{v\in V_n}\in A_n$ and $w\in V_{n+1}$, the blocks
$x_{s(e)}$, with
$e\in E_{n+1}$ and $r(e)=w$, inherit this order.  Since
\[
 \sum_{\substack{e\in E_{n+1}\\r(e)=w}}p_n(s(e))
 =\sum_{v\in V_n}a_{w,v}p_n(v)=p_{n+1}(w),
\]
the ordered block sum defines a map
\begin{equation}\label{eq:bratteli-connecting-map}
 \phi_{n+1}\colon A_n \to A_{n+1},\qquad (x_v)_{v\in V_n} \mapsto
 \Bigg(\bigoplus_{\substack{e\in E_{n+1}\\r(e)=w}}x_{s(e)}\Bigg)_{w\in V_{n+1}}.
\end{equation}
Regrouping the copies by their source vertices, in the fixed order on
$V_n$, gives
\[
 C_w\bigl(\phi_{n+1}(x)\bigr)_w C_w^{-1}
 =\bigoplus_{v\in V_n}(1_{a_{w,v}}\otimes x_v)
\]
for a permutation matrix $C_w$ independent of $x$; terms with
$a_{w,v}=0$ are omitted.
Block diagonal addition and multiplication yield
\[
 \phi_{n+1}(x+y)=\phi_{n+1}(x)+\phi_{n+1}(y)\text{ and }
 \phi_{n+1}(xy)=\phi_{n+1}(x)\phi_{n+1}(y).
\]
Moreover, the path-count identity implies, for $r\in R$,
\[
 \bigl(\phi_{n+1}(\iota_n(r))\bigr)_w
 =\bigoplus_{\substack{e\in E_{n+1}\\r(e)=w}}
   (1_{p_n(s(e))}\otimes r)
 =1_{p_{n+1}(w)}\otimes r.
\]
Hence $\phi_{n+1}\iota_n=\iota_{n+1}$; taking $r=1_R$ also gives
$\phi_{n+1}(1_{A_n})=1_{A_{n+1}}$.
Thus $\phi_{n+1}$ is a unital $R$-algebra homomorphism.
It is injective because every vertex emits an edge, so every $x_v$
occurs as a diagonal block of $\phi_{n+1}(x)$.
These maps define a direct system of unital $R$-algebras
\[
 R=A_0\xrightarrow{\phi_1}A_1\xrightarrow{\phi_2}A_2
 \xrightarrow{\phi_3}\cdots.
\]
For $m<n$, put $\phi_{n,m}=\phi_n\circ\cdots\circ\phi_{m+1}$.
Order $E_{m,n}$ lexicographically by the reversed edge sequences
$(e_n,\ldots,e_{m+1})$, using the fixed orders on the edge sets.  Then
\begin{equation}\label{eq:bratteli-composite-map}
 \bigl(\phi_{n,m}(x)\bigr)_w
 =\bigoplus_{\substack{p\in E_{m,n}\\r(p)=w}}x_{s(p)}.
\end{equation}
Indeed, composition with $\phi_{n+1}$ orders the blocks first by the
last edge and then by the preceding path, which proves the formula
by induction.  The $w$-component therefore contains
$a_{w,v}^{(n,m)}$ copies of $x_v$.  Grouping them by source vertex
requires a permutation conjugation independent of $x$.
Define
\[
 A(B,R)=\varinjlim(A_n(B,R),\phi_{n,m}).
\]
The compatible maps $\iota_n$ give the limit its specified $R$-algebra
structure.  Injectivity of the connecting maps identifies this limit with the
increasing union of the canonical images of the $A_n$.
It is the ultramatricial $R$-algebra associated with $B$.

Changing the orders on the edge sets gives an isomorphic direct system.
If $\phi'_{n+1}$ denotes the new map, reordering the copies
gives a blockwise permutation matrix $U_{n+1}\in A_{n+1}$ such that
\[
 \phi'_{n+1}=\Ad(U_{n+1})\phi_{n+1},\qquad
 \Ad(U)(x):=UxU^{-1}.
\]
Set $P_0=1_R$ and $P_{n+1}=U_{n+1}\phi_{n+1}(P_n)$.  Each $P_n$ is
blockwise a permutation matrix, and
\[
 \Ad(P_{n+1})\phi_{n+1}=\phi'_{n+1}\Ad(P_n).
\]
These conjugacies fix the scalar diagonal matrices, hence are
$R$-algebra isomorphisms and induce an isomorphism of the direct limits.

\subsection{Harmonic functions and weighted ranks}

For $X=(x_{ij})\in\Mat_{r\times s}(A_n)$, write
$x_{ij}=(x_{ij,v})_{v\in V_n}$ and for \(v\in V_n\) set
\[
 X_v:=(x_{ij,v})_{i,j}
 \in\Mat_{r\times s}\bigl(\Mat_{p_n(v)}(R)\bigr).
\]

Fix $n\geq1$ and $\lambda\colon V_n\to[0,1]$ such that $\sum_{v\in V_n}\lambda(v)=1$.
For $X\in\Mat_{r\times s}(A_n)$, define its weighted rank by
\begin{equation}\label{eq:level-weighted-rank}
 \rk_{\lambda,n}(X)
 :=\sum_{v\in V_n}\lambda(v)\rk_{p_n(v)}(X_v)
 =\sum_{v\in V_n}\frac{\lambda(v)}{p_n(v)}\rk(X_v).
\end{equation}
As a convex combination of the normalized component ranks,
$\rk_{\lambda,n}$ is a Sylvester matrix rank function on $A_n$.
For $v\in V_n$, define $e_{n,v}\in A_n$ by
\[
 (e_{n,v})_w=
 \begin{cases}
  1_{p_n(v)},&w=v,\\
  0,&w\ne v.
 \end{cases}
\]
The element $e_{n,v}$ is the identity of the $v$-summand, regarded
as an element of $A_n$, and
\[
 \rk_{p_n(v)}(1_{p_n(v)})=1,
 \qquad \rk_{\lambda,n}(e_{n,v})=\lambda(v).
\]
In particular, distinct weights give distinct ranks.

For a family of probability weights $(\lambda_n)_{n\geq1}$,
compatibility of the associated ranks with $\phi_{n+1}$, evaluated at
$e_{n,v}$, requires
\[
 \lambda_n(v)=\sum_{w\in V_{n+1}}
 \frac{p_n(v)a_{w,v}}{p_{n+1}(w)}\lambda_{n+1}(w).
\]
These are the harmonicity equations for the vertex weights.

\begin{definition}\label{def:harmonic}
Let $B=(V,E)$ be a Bratteli diagram.
A \emph{harmonic function} on $B$ is a family
$\alpha=(\alpha_n)_{n\geq1}$, where
$\alpha_n\colon V_n\to[0,1]$, such that, for every $n\geq1$,
\[
 \sum_{v\in V_n}\alpha_n(v)=1,
\]
and, for every $v\in V_n$,
\begin{equation}\label{eq:harmonic}
  \alpha_n(v)=\sum_{w\in V_{n+1}}
  \frac{p_n(v)a_{w,v}}{p_{n+1}(w)}\alpha_{n+1}(w).
\end{equation}
\end{definition}

This normalization agrees with that of Elek
\cite[Section~4]{Elek2013}.
We set $\alpha_0(v_0)=1$; then \eqref{eq:harmonic} also holds for $n=0$.
We denote the set of harmonic functions by $\mathcal H(B)$ and give it the
product topology.
By \Cref{lem:harmonic-simplex-nonempty}, it is non-empty and compact.
The function $\alpha$ is \emph{extreme} if an equality
$\alpha=t\beta+(1-t)\gamma$, with $\beta,\gamma\in\mathcal H(B)$ and $0<t<1$, forces
$\beta=\gamma=\alpha$.
It is \emph{strictly positive} if $\alpha_n(v)>0$ for all $n\geq1$ and
$v\in V_n$.

For \(1\leq m<n\), iterating \eqref{eq:harmonic} gives
\begin{equation}\label{eq:harmonic-long}
 \alpha_m(v)
 =\sum_{w\in V_n}
 \frac{p_m(v)a_{w,v}^{(n,m)}}{p_n(w)}\alpha_n(w).
\end{equation}
The induction step follows by substitution and path concatenation:
\[
 \sum_{w\in V_n}
 \frac{p_m(v)a_{w,v}^{(n,m)}}{p_n(w)}
 \frac{p_n(w)a_{z,w}^{(n+1,n)}}{p_{n+1}(z)}
 =\frac{p_m(v)}{p_{n+1}(z)}a_{z,v}^{(n+1,m)}.
\]

Fix $\alpha\in\mathcal H(B)$.  At level $n$, denote the rank in
\eqref{eq:level-weighted-rank} with $\lambda=\alpha_n$ by
$\rk_{\alpha,n}$.  Thus, for
$X\in\Mat_{r\times s}(A_n)$,
\begin{equation}\label{eq:weighted-matrix-rank}
 \rk_{\alpha,n}(X)
 :=\sum_{v\in V_n}\alpha_n(v)\rk_{p_n(v)}(X_v)
 =\sum_{v\in V_n}\frac{\alpha_n(v)}{p_n(v)}\rk(X_v).
\end{equation}
\begin{proposition}\label{prop:rank-compatible}
For every $n\geq1$, the function $\rk_{\alpha,n}$ is a Sylvester matrix rank
function on $A_n(B,R)$.  These ranks are compatible with the connecting maps
and therefore induce a Sylvester matrix rank function on $A(B,R)$.
\end{proposition}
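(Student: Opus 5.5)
The plan has three steps: check that each $\rk_{\alpha,n}$ is a Sylvester rank, prove compatibility with $\phi_{n+1}$ on all rectangular matrices, and then pass to the limit with \Cref{prop:direct-limit-rank}.

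\emph{Step 1: each $\rk_{\alpha,n}$ is a Sylvester rank.} For each $v\in V_n$, the map $x\mapsto x_v$ from $A_n$ to $\Mat_{p_n(v)}(R)$ is a unital ring homomorphism. The normalized rank $\rk_{p_n(v)}$ is a Sylvester matrix rank on $\Mat_{p_n(v)}(R)$, as noted after \eqref{eq:normalized-matrix-rank}. Its pullback along this projection is therefore a Sylvester matrix rank on $A_n$, and it sends $X$ to $\rk_{p_n(v)}(X_v)$. A convex combination of Sylvester matrix ranks is again one, because each of \ref{item:sm1}--\ref{item:sm4} is preserved under nonnegative combinations with total weight $1$. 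The inequality in \ref{item:sm2} needs a little care here. Applied to each summand, it gives $\sum\lambda_v\rk_v(AB)\le\sum\lambda_v\rk_v(A)$, and likewise with $B$ in place of $A$. This bounds the sum by the minimum of the two sums, which is what \ref{item:sm2} requires. Since $\sum_v\alpha_n(v)=1$, this shows that $\rk_{\alpha,n}$ is a Sylvester matrix rank.

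\emph{Step 2: compatibility.} I would show $\rk_{\alpha,n+1}(\phi_{n+1}(X))=\rk_{\alpha,n}(X)$ for every $X\in\Mat_{r\times s}(A_n)$. By \eqref{eq:bratteli-connecting-map} and the permutation $C_w$, the $w$-component of $\phi_{n+1}(X)$ equals $\bigoplus_v(1_{a_{w,v}}\otimes X_v)$ up to conjugation by permutation matrices. This uses the identification $\Mat_{r\times s}(\Mat_p(R))\cong\Mat_{rp\times sp}(R)$, so it amounts to row and column permutation matrices of sizes $r\,p_{n+1}(w)$ and $s\,p_{n+1}(w)$. The conjugation is entrywise, so for a rectangular $X$ the same $C_w$ acts on each entry. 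That turns it into left and right multiplication by invertible matrices, which leave the rank unchanged. Block additivity \ref{item:sm3} then gives
\[
 \rk\bigl((\phi_{n+1}(X))_w\bigr)=\sum_{v\in V_n}a_{w,v}\rk(X_v).
\]
Substituting this into \eqref{eq:weighted-matrix-rank} and exchanging the order of summation yields
\[
 \rk_{\alpha,n+1}(\phi_{n+1}(X))=\sum_{v}\rk(X_v)\sum_{w}\frac{a_{w,v}\alpha_{n+1}(w)}{p_{n+1}(w)}=\sum_v\frac{\alpha_n(v)}{p_n(v)}\rk(X_v)
\]
by the harmonicity equation \eqref{eq:harmonic}. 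The case $n=0$ is covered by the convention $\alpha_0(v_0)=1$, since $\rk_{\alpha,0}=\rk$ on $A_0=R$.

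\emph{Step 3: the limit rank.} Iterating Step 2 gives compatibility with every composite $\phi_{m,n}$. \Cref{prop:direct-limit-rank} then provides the induced Sylvester matrix rank on $A(B,R)$.

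The only delicate point is the bookkeeping in Step 2. One has to check that the block regrouping of rectangular matrices over $A_n$ is realized by invertible permutation matrices on the $R$-level, and that \ref{item:sm3} applies to the resulting block-diagonal form. Everything else is routine.
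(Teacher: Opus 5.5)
Your proposal is correct and follows essentially the same route as the paper: the first assertion comes from the convex combination of pulled-back normalized component ranks, compatibility comes from block additivity and permutation invariance combined with harmonicity, and the limit rank comes from \Cref{prop:direct-limit-rank}. The only difference is bookkeeping. You verify one step with \eqref{eq:harmonic}, absorbing level $0$ through the convention $\alpha_0(v_0)=1$, and then iterate. The paper instead treats $\phi_{n,m}$ directly via \eqref{eq:harmonic-long} and checks $\phi_1$ separately.
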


\begin{proof}
The first assertion follows from the construction in
\eqref{eq:level-weighted-rank}.

To prove compatibility, fix $1\leq m<n$ and $X\in\Mat_{r\times s}(A_m)$.
For each $w\in V_n$, the map $\phi_{n,m}$ repeats the $v$-block
$a_{w,v}^{(n,m)}$ times.
Applying this map entrywise, block
additivity and permutation invariance give
\[
 \rk\bigl((\phi_{n,m}(X))_w\bigr)
 =\sum_{v\in V_m}a_{w,v}^{(n,m)}\rk(X_v).
\]
Together with \eqref{eq:harmonic-long}, this yields
\begin{align*}
 \rk_{\alpha,n}(\phi_{n,m}(X))
 &=\sum_w\frac{\alpha_n(w)}{p_n(w)}
   \sum_v a_{w,v}^{(n,m)}\rk(X_v)\\
 &=\sum_v\frac{\rk(X_v)}{p_m(v)}
   \sum_w\frac{p_m(v)a_{w,v}^{(n,m)}}{p_n(w)}\alpha_n(w)\\
 &=\rk_{\alpha,m}(X).
\end{align*}
Equip $A_0=R$ with its fixed rank $\rk$.  For a rectangular
matrix $X$ over $R$, the $v$-block of $\phi_1(X)$ has $p_1(v)$ copies
of $X$, up to row and column permutations.  Hence
\[
 \rk_{\alpha,1}(\phi_1(X))
 =\sum_{v\in V_1}\alpha_1(v)\rk(X)=\rk(X).
\]
This proves compatibility at all levels.
\end{proof}

Let $\phi_{\infty,n}\colon A_n(B,R)\to A(B,R)$ be the canonical map.
Every $X\in\Mat_{r\times s}(A(B,R))$ has a representative
$X_n\in\Mat_{r\times s}(A_n(B,R))$ for some \(n\).
By \Cref{prop:direct-limit-rank},
\[
 \rk_\alpha(X):=\rk_{\alpha,n}(X_n),
 \qquad X=\phi_{\infty,n}(X_n),
\]
is independent of the representative and defines a Sylvester matrix
rank function on $A(B,R)$.

We write
\[
 \overline A_\alpha(B,R)
 =\overline{\bigl(A(B,R)/\ker(\rk_\alpha)\bigr)}
\]
for the completion in the induced rank metric.
The permutation conjugacies associated with a change of edge orders
preserve \eqref{eq:weighted-matrix-rank} by invariance of rank under
invertible row and column operations.  Thus the ranked $R$-algebras
$A(B,R)$ and $\overline A_\alpha(B,R)$ are independent of these
orders up to rank-preserving $R$-algebra isomorphism.

The existence of harmonic functions follows from compactness.
For a non-empty finite set $W$, write
\[
 \Delta(W)
 :=\Set*{x\colon W\to[0,1] \given \sum_{v\in W}x(v)=1}
\]
for the simplex on $W$.

\begin{lemma}\label{lem:harmonic-simplex-nonempty}
For every Bratteli diagram $B=(V,E)$, the set $\mathcal H(B)$ is non-empty, compact,
and convex.
Consequently, $\mathcal H(B)$ has extreme points.
\end{lemma}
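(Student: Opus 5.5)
We will regard $\mathcal H(B)$ as a subset of the product space
\[
 \Pi:=\prod_{n\geq1}\Delta(V_n),
\]
and derive the three properties from the finite-dimensional structure of each factor. Each $V_n$ is finite, so $\Delta(V_n)$ is a compact convex subset of $\mathbb R^{V_n}$. By Tychonoff's theorem, $\Pi$ is compact in the product topology. It is also metrizable, being a countable product of compact metric spaces.

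\emph{Closedness and convexity.} For fixed $n\geq1$ and $v\in V_n$, the harmonicity condition \eqref{eq:harmonic} is a linear equation involving only the coordinates $\alpha_n(v)$ and $(\alpha_{n+1}(w))_{w\in V_{n+1}}$. Its coefficients $p_n(v)a_{w,v}/p_{n+1}(w)$ are finite and nonnegative, since $p_{n+1}(w)>0$. Hence its solution set is a closed affine condition in $\Pi$. We conclude:
\begin{itemize}
\item $\mathcal H(B)$ is the intersection of these closed convex sets with the closed convex set $\Pi$;
\item it is therefore closed, and hence compact;
\item it is convex.
\end{itemize}

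\emph{Non-emptiness, the main step.} We will use a finite-intersection argument. For $N\geq1$, let $H_N\subseteq\Pi$ be the set of families satisfying \eqref{eq:harmonic} for all $n<N$. Each $H_N$ is closed, and the sets decrease in $N$. By compactness, it suffices to show every $H_N$ is non-empty.

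To build an element of $H_N$, choose any $\beta_N\in\Delta(V_N)$, for instance a point mass. Then define $\beta_m$ for $m<N$ by
\[
 \beta_m(v)=\sum_{w\in V_N}\frac{p_m(v)a_{w,v}^{(N,m)}}{p_N(w)}\beta_N(w),
\]
which is the form \eqref{eq:harmonic-long} dictates. For $n>N$, choose the $\beta_n$ arbitrarily.

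Two things must be checked.
\begin{itemize}
\item \emph{Each $\beta_m$ is a probability vector.} Summing over $v$ and using the path-count identity \eqref{eq:path-count},
\[
 \sum_{v\in V_m} p_m(v)a^{(N,m)}_{w,v}=p_N(w),
\]
so $\sum_{v\in V_m}\beta_m(v)=\sum_{w\in V_N}\beta_N(w)=1$. Nonnegativity is clear from the formula.
\item \emph{The family satisfies \eqref{eq:harmonic} for $n<N$.} This follows from the concatenation identity $a^{(N,m)}=\sum_{u\in V_{m+1}}a^{(N,m+1)}a^{(m+1,m)}$, by the same substitution used to prove \eqref{eq:harmonic-long}.
\end{itemize}
Hence $\bigcap_N H_N=\mathcal H(B)\neq\emptyset$.

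\emph{Extreme points.} $\mathcal H(B)$ is a non-empty compact convex subset of the locally convex space $\prod_n\mathbb R^{V_n}$. The Krein--Milman theorem therefore gives extreme points.

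The only step needing care is the non-emptiness verification above; the remaining steps are routine.
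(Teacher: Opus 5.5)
Your proof is correct and follows essentially the same route as the paper: compactness of $\prod_n\Delta(V_n)$, a decreasing sequence of closed sets each made non-empty by pushing a probability vector down from a fixed level, then affine equations for convexity and Krein--Milman for extreme points. The only cosmetic difference is that you push down in one step with the multi-level formula \eqref{eq:harmonic-long}, while the paper iterates the one-step maps $K_n$; this is the same computation.
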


\begin{proof}
For $n\geq1$, define
\[
 K_n\colon\Delta(V_{n+1})\longrightarrow\Delta(V_n)
\]
by
\[
 (K_nx)(v)
 =\sum_{w\in V_{n+1}}
   \frac{p_n(v)a_{w,v}}{p_{n+1}(w)}x(w).
\]
Moreover, \eqref{eq:path-count} gives, for every $w\in V_{n+1}$,
\[
 \sum_{v\in V_n}
 \frac{p_n(v)a_{w,v}}{p_{n+1}(w)}=1.
\]
Thus $K_n$ is a well-defined continuous affine map between simplices.

The product $X=\prod_{n\geq1}\Delta(V_n)$ is compact because each factor is a
non-empty compact simplex.
For $N\geq1$, let
\[
 F_N=
 \Set*{(x_n)_{n\geq1}\in X \given
 x_n=K_nx_{n+1}\text{ for }1\leq n\leq N}.
\]
Each $F_N$ is closed.  It is non-empty: for any
$x_{N+1}\in\Delta(V_{N+1})$, set
$x_n=K_n\cdots K_Nx_{N+1}$ for $1\leq n\leq N$ and choose the
coordinates $x_n$ with $n>N+1$ arbitrarily.
Since
\[
 F_1\supseteq F_2\supseteq\cdots,
\]
compactness gives \(\bigcap_{N\geq1}F_N\neq\emptyset\).
Since $\mathcal H(B)=\bigcap_{N\geq1}F_N$, it is non-empty and compact.
Its defining equations are affine, so it is convex.
The ambient product $\prod_{n\geq1}\mathbb R^{V_n}$ is a Hausdorff
locally convex space, so the Krein--Milman theorem gives extreme points.
\end{proof}

\subsection{Central measures, ergodicity, and aperiodicity}
\label{subsec:central-measures}

Following \cite[Section~2.3, Definition~1]{Vershik2014},
a Borel probability measure $\mu$ on $X_B$ is \emph{central} if
\[
 \mu(C(p))=\mu(C(q))
\]
whenever $p,q\in E_{0,n}$ and $r(p)=r(q)$.
A central measure is \emph{tail ergodic} if every Borel union of tail classes
has measure $0$ or $1$; see \cite[Section~2.3]{Vershik2014}.

\medskip
Let $\alpha\in\mathcal H(B)$.
For $p\in E_{0,n}$ with $r(p)=v\in V_n$, set
\begin{equation}\label{eq:cylinder-measure}
 \mu_\alpha(C(p))
 :=\frac{\alpha_n(v)}{p_n(v)}.
\end{equation}
Harmonicity makes these masses consistent under cylinder refinement,
and their sum at each level is one.  The measure extension theorem
therefore gives a unique Borel probability measure $\mu_\alpha$ on $X_B$.
Since the value in \eqref{eq:cylinder-measure} depends only on the terminal
vertex of $p$, the measure $\mu_\alpha$ is central.

We write
\[
 \operatorname{Prob}_{\mathrm{cent}}(X_B)
\]
for the convex set of central Borel probability measures on $X_B$, equipped
with the weak-* topology.

The correspondence between central measures and the inverse limit of
the vertex simplices takes the following form in this normalization;
see \cite[Proposition~1 and the discussion following it]{Vershik2014}.
For structural results on tail-invariant measures when the sequence
$(|V_n|)$ is bounded, see \cite{BezuglyiKwiatkowskiMedynetsSolomyak2013}.

\begin{proposition}
\label{prop:harmonic-central-measure}
Let $B=(V,E)$ be a Bratteli diagram.
The map
\[
 \mathcal H(B)\longrightarrow
 \operatorname{Prob}_{\mathrm{cent}}(X_B),
 \qquad
 \alpha\longmapsto\mu_\alpha,
\]
is an affine homeomorphism.
Under this correspondence, extreme harmonic functions correspond precisely to
tail-ergodic central measures, and strictly positive harmonic functions
correspond precisely to central measures with full support.
\end{proposition}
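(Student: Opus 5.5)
I will prove the three assertions in turn: bijectivity onto central measures, the homeomorphism, and then the extreme-point and support correspondences.

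\emph{Injectivity and affinity.} By \eqref{eq:cylinder-measure}, $\mu_\alpha(C(p))$ is an affine function of $\alpha_n$. Since measures are determined by their values on the cylinder $\pi$-system, $\alpha\mapsto\mu_\alpha$ is affine. Summing \eqref{eq:cylinder-measure} over the $p_n(v)$ paths ending at $v$ gives $\mu_\alpha(\bigcup_{r(p)=v}C(p))=\alpha_n(v)$. Hence $\alpha$ is recovered from $\mu_\alpha$, and the map is injective.

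\emph{Surjectivity.} Given a central $\mu$, define
\[
 \alpha_n(v):=\mu\Bigl(\bigcup_{p\in E_{0,n},\,r(p)=v}C(p)\Bigr).
\]
Centrality gives $\mu(C(p))=\alpha_n(v)/p_n(v)$ for each $p$ with $r(p)=v$. Harmonicity \eqref{eq:harmonic} then follows from the refinement identity: $C(p)$ is the disjoint union of the cylinders $C(pe)$ with $s(e)=v$. Summing over $w$, each such $e$ with $r(e)=w$ contributes $\alpha_{n+1}(w)/p_{n+1}(w)$. Multiplying by $p_n(v)$ yields \eqref{eq:harmonic}, so $\mu=\mu_\alpha$ by uniqueness of the extension.

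\emph{Homeomorphism.} The cylinders are clopen, so $\mu\mapsto\mu(C(p))$ is weak-* continuous, and the inverse map is continuous into the product topology. Conversely, indicator functions of cylinders span a dense subspace of $C(X_B)$ by Stone--Weierstrass. Therefore coordinatewise convergence of $\alpha$, together with uniform boundedness of the total mass, gives weak-* convergence of $\mu_\alpha$. Both spaces are compact Hausdorff, so the continuous bijection is a homeomorphism. An affine bijection preserves extreme points, so extreme harmonic functions correspond to extreme central measures.

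\emph{Extreme points versus tail ergodicity.} This is the main step.

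Suppose first that $\mu$ is central but not ergodic. Choose a tail-invariant Borel set $S$ with $0<\mu(S)<1$. I claim that $\mu|_S/\mu(S)$ is again central. Let $p,q\in E_{0,n}$ with $r(p)=r(q)$, and let $\theta$ be the homeomorphism $C(p)\to C(q)$ that replaces the initial segment $p$ by $q$. Centrality applied to all refinements shows $\theta_*(\mu|_{C(p)})=\mu|_{C(q)}$. Tail invariance gives $\theta(S\cap C(p))=S\cap C(q)$. Combining these, $\mu(S\cap C(p))=\mu(S\cap C(q))$, so the normalized restriction to $S$, and likewise to its complement, is central. This writes $\mu$ as a nontrivial convex combination of distinct central measures, so $\mu$ is not extreme.

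Conversely, suppose $\mu$ is ergodic and $\mu=t\nu_1+(1-t)\nu_2$ with $\nu_i$ central and $0<t<1$. Then $\nu_1\ll\mu$. I will show that $f=d\nu_1/d\mu$ is invariant under every $\theta$ above, using the same pushforward identity for both $\mu$ and $\nu_1$. These maps generate $\mathcal E_B$, so $f$ is tail invariant $\mu$-a.e. The most careful point is to pass to a genuinely invariant version of $f$, using countability of the family of maps $\theta$. Ergodicity then makes $f$ constant, so $\nu_1=\mu$.

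\emph{Full support.} The measure $\mu_\alpha$ has full support iff every cylinder has positive mass. By \eqref{eq:cylinder-measure} this holds iff $\alpha_n(v)>0$ for all $n$ and $v$, i.e.\ iff $\alpha$ is strictly positive.
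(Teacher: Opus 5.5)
Your proposal is correct. The bijection, affinity, homeomorphism and full-support parts match the paper's proof almost step for step. The paper proves continuity in both directions using sequences and metrizability; your additional compactness argument is harmless but redundant once both directions are proved.

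The real difference is in the equivalence between extremality and tail ergodicity. The paper observes that central measures are exactly those invariant under the countable group of finite prefix permutations, whose orbits are the tail classes. It then invokes the standard equivalence between extreme invariant measures and ergodic ones, citing Vershik. You instead prove that equivalence directly, using the partial prefix-replacement homeomorphisms $\theta\colon C(p)\to C(q)$. In one direction you normalize the restriction to a nontrivial tail-invariant set. In the other you show tail invariance of the Radon--Nikodym derivative $f=d\nu_1/d\mu$.

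The delicate point you flag does go through:
\begin{enumerate}
\item There are only countably many maps $\theta$.
\item Each maps $\mu$-null sets to $\mu$-null sets, because $\theta_*(\mu|_{C(p)})=\mu|_{C(q)}$.
\item Hence the saturation of the exceptional null set is a countable union of Borel images, so it is a Borel, tail-invariant null set.
\item Off this set $f$ is genuinely constant on tail classes. Redefining $f$ to be $1$ on it gives a tail-invariant version.
\item Ergodicity applied to the superlevel sets of this version, together with $\int f\,d\mu=1$, forces $f=1$ almost everywhere.
\end{enumerate}

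The paper's route is shorter and places the result in the general theory of invariant measures. Yours is self-contained and makes explicit which properties of the prefix-replacement maps are actually used.
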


\begin{proof}
For $\mu\in\operatorname{Prob}_{\mathrm{cent}}(X_B)$ and $v\in V_n$, define
\begin{equation}\label{eq:harmonic-vector-from-measure}
 \alpha_n^\mu(v):=\mu\Bigg(\bigsqcup_{\substack{p\in E_{0,n}\\r(p)=v}}C(p)\Bigg).
\end{equation}
These sets, indexed by $v\in V_n$, partition $X_B$, so
$\sum_v\alpha_n^\mu(v)=1$.
Centrality gives, for $r(p)=v$,
\begin{equation}\label{eq:central-cylinder-from-tower}
 \mu(C(p))=\frac{\alpha_n^\mu(v)}{p_n(v)}.
\end{equation}
Refinement by one edge therefore yields
\[
 \frac{\alpha_n^\mu(v)}{p_n(v)}
 =\sum_{w\in V_{n+1}}a_{w,v}
   \frac{\alpha_{n+1}^\mu(w)}{p_{n+1}(w)},
\]
which is \eqref{eq:harmonic} after multiplication by $p_n(v)$.
Thus $\alpha^\mu\in\mathcal H(B)$.
The cylinder formulas and uniqueness of the measure give
\[
 \alpha^{\mu_\alpha}=\alpha,
 \qquad \mu_{\alpha^\mu}=\mu.
\]
Both constructions are affine.

For a sequence $\alpha^{(j)}\in\mathcal H(B)$, the product topology and
\eqref{eq:cylinder-measure} give
\[
 \begin{aligned}
 \alpha^{(j)}\longrightarrow\alpha
 &\iff \alpha_n^{(j)}(v)\longrightarrow\alpha_n(v)
       \quad\text{for every }n\geq1,\ v\in V_n\\
 &\iff \mu_{\alpha^{(j)}}(C(p))\longrightarrow\mu_\alpha(C(p))
       \quad\text{for every finite path }p\\
 &\iff \mu_{\alpha^{(j)}}\xrightarrow{\mathrm{w}^*}\mu_\alpha.
 \end{aligned}
\]
The last equivalence holds because cylinder indicators are continuous and
their linear span is uniformly dense in $C(X_B)$.  Both spaces are
metrizable, so these equivalences prove that the correspondence is a
homeomorphism.

Central measures are precisely the measures invariant under finite prefix
permutations: these permutations exchange paths ending at the same vertex
and leave all subsequent edges unchanged.
Their orbits are the tail classes.
The standard equivalence between extremality and ergodicity for
these invariant measures therefore gives
\[
 \alpha\in\operatorname{ext}\mathcal H(B)
 \iff \mu_\alpha\in\operatorname{ext}
       \operatorname{Prob}_{\mathrm{cent}}(X_B)
 \iff \mu_\alpha\text{ is tail ergodic};
\]
see \cite[Section~2.3]{Vershik2014}.
Finally, since the cylinders form a basis and $p_n(v)>0$,
\[
 \alpha_n(v)>0\ \text{for all }n,v
 \iff \mu_\alpha(C(p))>0\ \text{for every finite path }p
 \iff \operatorname{supp}\mu_\alpha=X_B.
\]
\end{proof}

Following \cite[Section~2.2]{BezuglyiKarpel2016}, the diagram is
\emph{aperiodic} if every $\mathcal E_B$-class is infinite.
It is \emph{simple} if for every $n\geq0$ there exists $m>n$ such that
\[
 a_{w,v}^{(m,n)}>0,
 \qquad \forall v\in V_n,\ w\in V_m.
\]

The corresponding aperiodicity condition for a harmonic function is as follows.

\begin{definition}\label{def:aperiodic}
For $\alpha\in\mathcal H(B)$, the pair $(B,\alpha)$ is
\emph{$\alpha$-aperiodic} if, for every $L\geq1$,
\begin{equation}\label{eq:alpha-aperiodic}
  \lim_{n\to\infty}
  \sum_{\substack{v\in V_n\\p_n(v)<L}}\alpha_n(v)=0.
\end{equation}

\end{definition}

The tail relation is also the AF-equivalence relation of \cite[Definition~3.8
and Proposition~3.9]{Putnam2018}.
Moreover, as noted in \cite[Section~2.2]{BezuglyiKarpel2016}, the diagram is
simple if and only if $\mathcal E_B$ is minimal, meaning that every tail class
is dense in $X_B$.

\begin{remark}
  Aperiodicity is equivalent to
\[
 \min_{v\in V_n}p_n(v)\longrightarrow\infty.
\]
For $x=(x_1,x_2,\ldots)\in X_B$, let $[x]_n$ denote the paths agreeing
with $x$ after level $n$.
Then \(|[x]_n|=p_n(r(x_n))\) and \([x]_n\subseteq[x]_{n+1}\).
Thus $[x]_{\mathcal E_B} =\bigcup_{n\geq1}[x]_n$ is infinite if and only if
$p_n(r(x_n))\to\infty$.

For $L\geq1$, the clopen sets
\[
  U_n(L)=\Set*{ x\in X_B  \given p_n(r(x_n))\geq L  }
\]
are increasing in $n$.  If every tail class is infinite, then
$\bigcup_n U_n(L)=X_B$; compactness gives $U_N(L)=X_B$ for some $N$.
Since every vertex lies on an infinite path,
\[
 \min_{v\in V_n}p_n(v)\geq L\qquad \forall n\geq N.
\]
Conversely, this uniform divergence implies $|[x]_n|\to\infty$ for every
$x\in X_B$.
\end{remark}

\begin{remark}
  Fix an integer $L\geq1$ and put
\[
 D_n(L):=\Set*{x\in X_B\given p_n(r(x_n))<L}.
\]

  The set of paths passing through $v\in V_n$ has
  $\mu_\alpha$-measure $\alpha_n(v)$.
  Hence
\[
  \mu_\alpha\Set*{x\in X_B \given p_n(r(x_n))<L}
  =\sum_{\substack{v\in V_n\\p_n(v)<L}}\alpha_n(v).
\]
The sets $D_n(L)$ decrease with $n$.
Their intersection consists exactly of paths whose full tail class has
fewer than $L$ elements, since $[x]_n$ increases to $[x]_{\mathcal E_B}$.
Continuity of a finite measure from above therefore gives
\[
 \lim_n\sum_{\substack{v\in V_n\\p_n(v)<L}}\alpha_n(v)
 =\mu_\alpha\Set*{x\in X_B\given |[x]_{\mathcal E_B}|<L}.
\]
Since the finite tail classes are exhausted by the integer bounds $L$,
\[
 (B,\alpha)\text{ is }\alpha\text{-aperiodic}
 \iff |[x]_{\mathcal E_B}|=\infty
       \quad\text{for }\mu_\alpha\text{-almost every }x.
\]
\end{remark}

By \Cref{prop:harmonic-central-measure}, the hypotheses of the main
theorem are equivalent to
\[
 \mu_\alpha\text{ is tail ergodic},\qquad
 |[x]_{\mathcal E_B}|=\infty
       \quad\text{for }\mu_\alpha\text{-almost every }x.
\]

\begin{proposition}\label{prop:simple-positive}
If $B$ is simple, then every harmonic function is strictly positive.
If $B$ is aperiodic, then $(B,\alpha)$ is $\alpha$-aperiodic for every
$\alpha\in\mathcal H(B)$.
\end{proposition}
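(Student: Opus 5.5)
For the first assertion I will use simplicity together with harmonicity, and in particular the iterated identity \eqref{eq:harmonic-long}. Fix $n\geq1$ and $v\in V_n$. Simplicity gives $m>n$ with $a_{w,v}^{(m,n)}>0$ for every $w\in V_m$. By \eqref{eq:harmonic-long},
\[
 \alpha_n(v)=\sum_{w\in V_m}\frac{p_n(v)a_{w,v}^{(m,n)}}{p_m(w)}\alpha_m(w).
\]
Every coefficient $p_n(v)a^{(m,n)}_{w,v}/p_m(w)$ is strictly positive, because $p_n(v)>0$ and $p_m(w)>0$. The weights $\alpha_m(w)$ are nonnegative and sum to one, so at least one of them is positive. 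Hence $\alpha_n(v)>0$. The case $n=0$ is trivial, since $\alpha_0(v_0)=1$.

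For the second assertion I will use the first remark following \Cref{def:aperiodic}. There it is shown that aperiodicity of $B$ implies that for every $L\geq1$ there is $N$ with $\min_{v\in V_n}p_n(v)\geq L$ for all $n\geq N$. For such $n$ the index set $\Set*{v\in V_n\given p_n(v)<L}$ is empty. The sum in \eqref{eq:alpha-aperiodic} is therefore $0$ for all $n\geq N$, and the limit is $0$. This holds for every $\alpha\in\mathcal H(B)$, because the vanishing comes from the diagram alone and does not depend on the weights.

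I do not expect a real obstacle in either part. The only point that needs care is that the compactness argument in that remark uses that every vertex lies on an infinite path, and the paper's conventions supply this. I will rely on the remark as stated rather than reprove it. Alternatively, one could appeal to the second remark: under aperiodicity every tail class is infinite, so the set $\Set*{x\given|[x]_{\mathcal E_B}|<L}$ is empty and has $\mu_\alpha$-measure zero.
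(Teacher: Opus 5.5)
Your proof is correct and follows the paper's argument. Simplicity makes every coefficient in \eqref{eq:harmonic-long} positive at a later level, some weight at that level is positive, and so $\alpha_n(v)>0$; aperiodicity gives $\min_{v\in V_n}p_n(v)\to\infty$, so the sum in \eqref{eq:alpha-aperiodic} is eventually empty, and your explicit citation of that remark is simply what the paper's one-line ``eventually empty'' leaves implicit.
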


\begin{proof}
Fix $m\geq1$ and $v\in V_m$, and choose $n>m$ such that
$a_{w,v}^{(n,m)}>0$ for all $w\in V_n$.
Since $\sum_w\alpha_n(w)=1$, there is $w_0$ with $\alpha_n(w_0)>0$.
Nonnegativity of the terms in \eqref{eq:harmonic-long} gives
\[
 \alpha_m(v)\geq
 \frac{p_m(v)a_{w_0,v}^{(n,m)}}{p_n(w_0)}\alpha_n(w_0)>0.
\]
The second assertion follows because, for fixed $L$, the sum in
\eqref{eq:alpha-aperiodic} is eventually empty.
\end{proof}

The Pascal diagram below admits strictly positive extreme harmonic functions
satisfying the corresponding aperiodicity condition, although the diagram
is neither simple nor aperiodic.  For background and recent developments,
see \cite[Section~5.2]{Vershik2014} and \cite{BezuglyiDudkoKarpel2026}.

\begin{example}\label{ex:pascal-harmonic}
For the Pascal diagram, write $V_n=\{0,\ldots,n\}$ in level coordinates,
and join $k\in V_n$ to $k$ and $k+1$ in $V_{n+1}$ by one edge each.
\begin{center}
  \begin{tikzpicture}[x=1.65cm,y=.5cm]
    \foreach \n in {0,1,2,3,4} {
      \foreach \k in {0,...,\n} {
        \fill (\n,\n-2*\k) circle [radius=1pt] coordinate (v\n-\k)
          node[above=2pt,font=\scriptsize] {$\k$};
      }
    }
    \foreach \n/\m in {0/1,1/2,2/3,3/4} {
      \foreach \k in {0,...,\n} {
        \pgfmathtruncatemacro{\l}{\k+1}
        \draw[bratteli edge] (v\n-\k) -- (v\m-\k);
        \draw[bratteli edge] (v\n-\k) -- (v\m-\l);
      }
    }
    \foreach \n in {0,1,2,3,4} {
      \node[below] at (\n,-4.8) {$V_{\n}$};
    }
    \node at (4.55,0) {$\cdots$};
  \end{tikzpicture}
\end{center}
Then
\[
 p_n(k)=\binom nk.
\]
For every $t\in[0,1]$, put
\[
 \alpha_n^t(k)=\binom nk t^k(1-t)^{n-k},
\]
with the convention $0^0=1$ in this formula.
The weights $\alpha_n^t$ have total mass one by the binomial theorem.
The only successors of $k$ are $k$ and $k+1$, and
\[
 \frac{\alpha_{n+1}^t(k)}{p_{n+1}(k)}
 +\frac{\alpha_{n+1}^t(k+1)}{p_{n+1}(k+1)}
 =t^k(1-t)^{n-k}
 =\frac{\alpha_n^t(k)}{p_n(k)}.
\]
Multiplying this equality by $p_n(k)$ gives the harmonic equation, so
$\alpha^t\in\mathcal H(B)$.
Identify a path with its sequence of increments in $\{0,1\}$.
The associated central measure is the Bernoulli measure with parameter $t$.
Tail equivalence coincides with equivalence under finite permutations
of the increments.  Indeed, paths with the same tail after level $n$
have the same number of ones among their first $n$ increments, which
therefore differ by a permutation.  Conversely, a permutation supported
on the first $n$ coordinates preserves their sum and all later
increments, hence preserves the path after level $n$.
The Hewitt--Savage zero--one law for the independent, identically
distributed increments implies tail ergodicity.  Extremality of
$\alpha^t$ follows from \Cref{prop:harmonic-central-measure}.
For $0<t<1$ and $n\geq\max\{2,L\}$, the inequality
$\binom nk\geq n$ for $1\leq k\leq n-1$ gives
\[
 \sum_{\substack{0\leq k\leq n\\p_n(k)<L}}\alpha_n^t(k)
 \leq t^n+(1-t)^n\longrightarrow0.
\]
Thus $(B,\alpha^t)$ is $\alpha^t$-aperiodic.
However, $p_n(0)=p_n(n)=1$ for every $n$, so $B$ itself is not aperiodic.
For $t\in\{0,1\}$, the measure is supported on a single path with
$p_n=1$ at every level, so the aperiodicity condition fails for $L>1$.

For $0<t<1$, all weights are positive, but $B$ is not simple:
for every $m>1$, the vertex $1\in V_1$ cannot reach $0\in V_m$.
Thus strict positivity and extremality of a harmonic function do not
imply simplicity of the diagram.
\end{example}

\subsection{Support reduction and faithfulness}

The \emph{support subdiagram} of $\alpha$ is denoted by
$B_\alpha=(V^\alpha,E^\alpha)$, where
\[
  V_n^\alpha=\Set*{v\in V_n \given \alpha_n(v)>0},
  \qquad \forall n\geq0.
\]
In particular, $V_0^\alpha=V_0$ by the convention $\alpha_0(v_0)=1$.
Its edges are
\[
 E_n^\alpha=\Set*{e\in E_n\given r(e)\in V_n^\alpha}.
\]
Equip $E_n^\alpha$ with the restriction of the fixed order on $E_n$.

For $n\geq1$, $v\in V_{n-1}$, and $w\in V_n$, harmonicity gives
\[
 a_{w,v}^{(n,n-1)}\alpha_n(w)>0
 \quad\Longrightarrow\quad\alpha_{n-1}(v)>0.
\]
Conversely, for $v\in V_n$, $\alpha_n(v)>0$ implies
$a_{w,v}^{(n+1,n)}\alpha_{n+1}(w)>0$ for some $w\in V_{n+1}$.
Every support vertex is therefore reachable from the root and emits an
edge in $B_\alpha$, so $B_\alpha$ is a Bratteli diagram.

\begin{proposition}\label{prop:support-reduction}
Let $B_\alpha$ be the support subdiagram of $\alpha$.
\begin{enumerate}[label=(\alph*), ref=\alph*]
\item\label{item:support-harmonic} The restriction $\alpha|_{B_\alpha}$ is a strictly positive harmonic
  function, and the path counts of vertices in $B_\alpha$ are the same
  as their path counts in $B$.
\item\label{item:support-rank} Deleting the zero-weight blocks induces a rank-preserving surjection
  \[
    A(B,R)\longrightarrow A(B_\alpha,R)
  \]
  and a rank-preserving isomorphism
  \begin{equation}\label{eq:support-completion}
    \overline A_\alpha(B,R)
    \cong
    \overline A_{\alpha|_{B_\alpha}}(B_\alpha,R).
  \end{equation}
\item\label{item:support-face} Extension by zero identifies $\mathcal H(B_\alpha)$ with the closed face
  \[
    F_\alpha=
    \Set*{\beta\in\mathcal H(B) \given
      \beta_n(v)=0\text{ whenever }\alpha_n(v)=0}.
  \]
  In particular,
  \[
    \alpha\in\operatorname{ext}\mathcal H(B)
    \quad\Longleftrightarrow\quad
    \alpha|_{B_\alpha}\in\operatorname{ext}\mathcal H(B_\alpha).
  \]
\item\label{item:support-aperiodic} The pair $(B,\alpha)$ is $\alpha$-aperiodic if and only if
  $(B_\alpha,\alpha|_{B_\alpha})$ is $\alpha|_{B_\alpha}$-aperiodic.
\end{enumerate}
\end{proposition}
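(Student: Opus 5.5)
I would prove the four parts in order, starting from the observations just before the statement that $B_\alpha$ is a Bratteli diagram.

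For \eqref{item:support-harmonic}, I would first check by induction on $n$ that every path in $B$ from $v_0$ to a vertex $w\in V_n^\alpha$ lies entirely in $B_\alpha$. If $e\in E_n$ has $r(e)=w$ with $\alpha_n(w)>0$, then the displayed implication gives $\alpha_{n-1}(s(e))>0$, so $s(e)\in V_{n-1}^\alpha$ and $e\in E_n^\alpha$. Hence $a^{(n,m)}_{w,v}$ computed in $B_\alpha$ equals that in $B$ whenever $w\in V^\alpha_n$, and in particular the path counts $p_n(w)$ agree. Harmonicity of the restriction then follows from \eqref{eq:harmonic} for $v\in V_n^\alpha$: the terms with $w\notin V_{n+1}^\alpha$ vanish, and the remaining coefficients are unchanged. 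Since the weights on $V_n^\alpha$ still sum to one, the restriction is strictly positive and harmonic.

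For \eqref{item:support-rank}, I would define $\pi_n\colon A_n(B,R)\to A_n(B_\alpha,R)$ by deleting the blocks indexed by $V_n\setminus V_n^\alpha$. This is a surjective unital $R$-algebra homomorphism, since the block sizes agree by part (a).

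\begin{itemize}
\item Compatibility: for $w\in V^\alpha_{n+1}$, the $w$-block of $\phi_{n+1}(x)$ is the ordered sum of $x_{s(e)}$ over edges $e$ into $w$. All these edges lie in $E_{n+1}^\alpha$ with the restricted order, so $\pi_{n+1}\phi_{n+1}=\phi^\alpha_{n+1}\pi_n$.
\item Rank preservation: $\rk_{\alpha,n}(X)$ only involves the blocks with $\alpha_n(v)>0$, so $\rk_{\alpha,n}(X)=\rk_{\alpha|,n}(\pi_n X)$.
\end{itemize}

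Passing to the limit gives a rank-preserving surjection $\pi\colon A(B,R)\to A(B_\alpha,R)$. A rank-preserving map satisfies $\pi^{-1}(\ker)=\ker$, so it induces an isometric isomorphism of $A(B,R)/\ker\rk_\alpha$ onto $A(B_\alpha,R)/\ker\rk_{\alpha|}$. Completing this isomorphism, as in \Cref{sec:rank-completions}, gives \eqref{eq:support-completion}.

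For \eqref{item:support-face}, extension by zero sends $\mathcal H(B_\alpha)$ into $\mathcal H(B)$. For $v\notin V_n^\alpha$, every $w$ with $a_{w,v}>0$ has $w\notin V^\alpha_{n+1}$ (by the contrapositive of the implication used above), so both sides of \eqref{eq:harmonic} are $0$. For $v\in V_n^\alpha$ the equation is the one in $B_\alpha$, by part (a). Conversely, restricting any $\beta\in F_\alpha$ gives an element of $\mathcal H(B_\alpha)$ by the same computation as in part (a). The two maps are mutually inverse, affine and continuous.

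$F_\alpha$ is closed, being cut out by coordinate equations. It is a face: if $t\beta+(1-t)\gamma\in F_\alpha$ with $0<t<1$, then nonnegativity forces $\beta_n(v)=\gamma_n(v)=0$ wherever the combination vanishes. Since $\alpha\in F_\alpha$, the extreme points of $\mathcal H(B)$ lying in $F_\alpha$ are exactly the extreme points of $F_\alpha$. This gives the stated equivalence.

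For \eqref{item:support-aperiodic}, the sums in \eqref{eq:alpha-aperiodic} for $B$ and for $B_\alpha$ coincide term by term. The omitted vertices have weight zero, and the path counts agree by part (a).

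I expect the main obstacle to be the compatibility of the deletion maps with the connecting maps in part (b). The key point is that no edge into a support vertex comes from a non-support vertex; this is exactly what makes the restricted edge order reproduce the block sums.
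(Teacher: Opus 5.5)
Your proposal is correct and follows the paper's proof essentially step for step. The same four ingredients appear in both: the backward positivity implication showing that root paths to support vertices stay in $B_\alpha$; deletion of zero-weight blocks as a rank-preserving surjection that passes to the rank-zero quotients and then to the completions; extension by zero onto the closed face $F_\alpha$; and the term-by-term comparison of the aperiodicity sums. The only cosmetic difference is in part (b), where the paper phrases deletion as the quotient by the ideals $I_n^\alpha$, which satisfy $\phi_{n+1}(I_n^\alpha)\subseteq I_{n+1}^\alpha$, whereas you check the compatibility with the restricted edge orders directly.
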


\begin{proof}
  If \(a_{w,v}>0\) and \(\alpha_{n+1}(w)>0\), then by harmonicity,
\[
 \alpha_n(v)\geq
 \frac{p_n(v)a_{w,v}}{p_{n+1}(w)}\alpha_{n+1}(w)>0.
\]
Iteration shows that, for every $w\in V_m^\alpha$, all paths from
the root to $w$ lie in $B_\alpha$.
Thus the path counts are unchanged, and deleting zero-weight terms
from the harmonic equations proves (\ref{item:support-harmonic}).

\medskip
For $n\geq1$, let
\[
 I_n^\alpha=
 \bigoplus_{\substack{v\in V_n\\\alpha_n(v)=0}}\Mat_{p_n(v)}(R).
\]
Since no edge joins a zero-weight vertex to a positive-weight vertex,
$\phi_{n+1}(I_n^\alpha)\subseteq I_{n+1}^\alpha$.
Thus the quotient maps
\[
 A_n(B,R)\longrightarrow A_n(B,R)/I_n^\alpha
 \cong A_n(B_\alpha,R)
\]
commute with the connecting maps.
These maps induce a surjective $R$-algebra homomorphism
$q\colon A(B,R)\to A(B_\alpha,R)$.
Deletion of zero-weight components leaves
\eqref{eq:weighted-matrix-rank} unchanged.
Hence, for every rectangular matrix $X$,
\[
 \rk_{\alpha|_{B_\alpha}}(q(X))=\rk_\alpha(X).
\]
The surjection $q$ therefore induces an isometric isomorphism between the
quotients by the rank-zero ideals.
This isomorphism and its inverse extend continuously to the completions and
preserve matrix ranks, proving \eqref{eq:support-completion}.

\medskip
Let $\beta\in\mathcal H(B_\alpha)$ and extend it by zero outside the support.
By (\ref{item:support-harmonic}), the harmonic equations at support
vertices are preserved.  Outside the support, both sides vanish
because no edge leads into the support.  Normalization is unchanged.
Restriction is its inverse on $F_\alpha$.
Both maps are affine and continuous in the product topology.

The coordinate conditions defining $F_\alpha$ are closed.
For $0<t<1$ and $\beta,\gamma\in\mathcal H(B)$, nonnegativity gives
\[
 t\beta_n(v)+(1-t)\gamma_n(v)=0
 \ \Longrightarrow\ \beta_n(v)=\gamma_n(v)=0.
\]
Thus $t\beta+(1-t)\gamma\in F_\alpha$ implies
$\beta,\gamma\in F_\alpha$, so $F_\alpha$ is a closed face.
Since $\alpha\in F_\alpha$, every convex decomposition of $\alpha$ in
$\mathcal H(B)$ lies in this face.  Therefore $\alpha$ is extreme in $\mathcal H(B)$
if and only if it is extreme in $F_\alpha$.  The affine bijection with
$\mathcal H(B_\alpha)$ proves (\ref{item:support-face}).

\medskip
Finally, the sums in \eqref{eq:alpha-aperiodic} are unchanged after zero-weight
terms are deleted, and the relevant path counts agree by (\ref{item:support-harmonic}), proving (\ref{item:support-aperiodic}).
\end{proof}

\begin{proposition}\label{prop:weighted-rank-faithful}
Assume that the Sylvester matrix rank function $\rk$ on $R$ is faithful.
Then the induced rank $\rk_\alpha$ on $A(B,R)$ is faithful if and only if
$\alpha$ is strictly positive.
In particular, it is faithful whenever $B$ is simple.
\end{proposition}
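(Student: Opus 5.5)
To prove \Cref{prop:weighted-rank-faithful}, I would first reduce faithfulness of $\rk_\alpha$ to a levelwise statement. Every element of $A(B,R)$ is $\phi_{\infty,n}(x)$ for some $x\in A_n$, and by \Cref{prop:direct-limit-rank} we have $\rk_\alpha(\phi_{\infty,n}(x))=\rk_{\alpha,n}(x)$. The connecting maps are injective, so $\phi_{\infty,n}(x)=0$ if and only if $x=0$. Hence $\rk_\alpha$ is faithful exactly when each $\rk_{\alpha,n}$ is faithful on $A_n$; the case $n=0$ is just the assumed faithfulness of $\rk$. By the discussion after \Cref{def.MRank}, it is enough to check scalars, that is, elements of $A_n$.

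For sufficiency, assume $\alpha$ is strictly positive. Let $x=(x_v)_{v\in V_n}\in A_n$ be nonzero, and pick $v$ with $x_v\neq0$. Some entry of $x_v\in\Mat_{p_n(v)}(R)$ is a nonzero element $a\in R$. Faithfulness of $\rk$ together with the entrywise lower bound $\rk(a_{ij})\leq\rk(A)$ gives $\rk(x_v)\geq\rk(a)>0$. Since $\alpha_n(v)>0$ and all terms of \eqref{eq:weighted-matrix-rank} are nonnegative,
\[
 \rk_{\alpha,n}(x)\geq\frac{\alpha_n(v)}{p_n(v)}\rk(x_v)>0 .
\]

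For necessity, suppose $\alpha_n(v)=0$ for some $n\geq1$ and $v\in V_n$. Then the element $e_{n,v}$ is nonzero, because it contains the block $1_{p_n(v)}$ and $1_R\neq0$ (note that $\rk(1_R)=1$). Its weighted rank is $\rk_{\alpha,n}(e_{n,v})=\alpha_n(v)=0$. Its image in $A(B,R)$ is nonzero by injectivity, so $\rk_\alpha$ is not faithful.

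The final sentence of the proposition follows by combining this equivalence with \Cref{prop:simple-positive}: if $B$ is simple, every harmonic function is strictly positive, so $\rk_\alpha$ is faithful. No step here is a genuine obstacle. The only point needing care is the injectivity of the connecting maps in the limit, which the paper has already established.
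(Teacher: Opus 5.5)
Your proof is correct and follows essentially the same route as the paper. Sufficiency comes from the single positive term $\frac{\alpha_n(v)}{p_n(v)}\rk(x_v)$, necessity from the weight-zero block unit $e_{n,v}$, the passage to matrices over the limit from injectivity of the connecting maps and the entrywise rank bounds, and the simple case from \Cref{prop:simple-positive}; you are only slightly more explicit than the paper in justifying $\rk(x_v)>0$ via a nonzero entry and $e_{n,v}\neq0$ via $1_R\neq0$.
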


\begin{proof}
Suppose that $\alpha$ is strictly positive and that $0\neq x=(x_v)_{v\in
V_n}\in A_n(B,R)$.
There exists $v$ with $x_v\neq0$, and therefore
\[
  \rk_{\alpha,n}(x)
  \geq \frac{\alpha_n(v)}{p_n(v)}\rk(x_v)>0.
\]
Injectivity of the connecting maps and the coordinate inequalities
from \Cref{subsec:sylvester-ranks} imply faithfulness on all matrices
over the direct limit.
Conversely, if $\alpha_n(v)=0$, then
\[
 \phi_{\infty,n}(e_{n,v})\neq0,\qquad
 \rk_\alpha(\phi_{\infty,n}(e_{n,v}))=\alpha_n(v)=0.
\]
The final assertion follows from \Cref{prop:simple-positive}.
\end{proof}

\begin{example}\label{ex:positive-not-compact}
Let the root have one edge to each of two vertices $u_1,w_1$, and, for every
$n\geq1$, let there be two edges from $u_n$ to $u_{n+1}$ and two edges from
$w_n$ to $w_{n+1}$, with no edges between the two branches.
\begin{center}
  \begin{tikzpicture}[x=1.65cm]
    \fill (0.2,0) circle [radius=1pt] node[left] {\(v_{0}\)} coordinate (r);

    \foreach \n in {1,2,3} {
      \fill (\n,.5) circle [radius=1pt] node[above] {\(u_{\n}\)} coordinate (u\n);
      \fill (\n,-.5) circle [radius=1pt] node[below] {\(w_{\n}\)} coordinate (w\n);
    }

    \draw[bratteli edge] (r) -- (u1);
    \draw[bratteli edge] (r) -- (w1);

    \foreach \n/\m in {1/2,2/3} {
      \foreach \b in {u,w} {
        \foreach \h in {-3,3} {
          \draw[bratteli edge] (\b\n)
            .. controls ([xshift=15pt,yshift=\h pt]\b\n)
            and ([xshift=-15pt,yshift=\h pt]\b\m) .. (\b\m);
        }
      }
    }

    \node at (3.5,.5) {\(\cdots\)};
    \node at (3.5,-.5) {\(\cdots\)};
\end{tikzpicture}
\end{center}
Then
\[
  p_n(u_n)=p_n(w_n)=2^{n-1},
\]
so the diagram is aperiodic.
The harmonic equations reduce to
\[
 \alpha_n(u_n)=\alpha_{n+1}(u_{n+1}),\qquad
 \alpha_n(w_n)=\alpha_{n+1}(w_{n+1})
 \qquad(n\geq1).
\]
Together with normalization, these equations show that every harmonic
function is uniquely determined by $t=\alpha_1(u_1)\in[0,1]$:
\[
  \alpha_n^t(u_n)=t,
  \qquad
  \alpha_n^t(w_n)=1-t.
\]
Conversely, these weights define a harmonic function for every $t\in[0,1]$.
The maps $t\mapsto\alpha^t$ and $\alpha\mapsto\alpha_1(u_1)$ are inverse
continuous affine maps, hence $\mathcal H(B)\cong[0,1]$.
The extreme harmonic functions are $\alpha^0$ and $\alpha^1$.
Both pairs $(B,\alpha^0)$ and $(B,\alpha^1)$ satisfy the corresponding
aperiodicity condition, but neither harmonic function is strictly positive.

For $0<t<1$, the support subdiagram is $B$ itself.  At $t=0$ it
consists of the $w$-branch, and at $t=1$ of the $u$-branch.
Thus support reduction deletes precisely the branch of weight zero.
\end{example}

\subsection{Factor sequences}
\label{subsec:factor-sequence-models}

\begin{definition}\label{def:factor-sequence}
An unbounded sequence $\mu=(n_i)_{i\geq1}$ of positive integers is a
\emph{factor sequence} if $n_i\mid n_{i+1}$ for every $i\geq1$.
\end{definition}

For positive integers $q,r$, write
\[
 \jmath_{q,r}\colon\Mat_q(R)\longrightarrow\Mat_{qr}(R),
 \qquad y\longmapsto1_r\otimes y.
\]
For every rectangular matrix $Y$ over $\Mat_q(R)$, block additivity
and permutation invariance give
\[
 \rk_{qr}(\jmath_{q,r}(Y))
 =\frac{r\rk(Y)}{qr}=\rk_q(Y).
\]
Given a factor sequence $\mu$, set $n_0=1$ and
$d_i=n_i/n_{i-1}$ for $i\geq1$.  The maps
\[
 \phi_i:=\jmath_{n_{i-1},d_i}\colon
 \Mat_{n_{i-1}}(R)\longrightarrow\Mat_{n_i}(R)
 \qquad(i\geq1)
\]
define a direct system starting with $\Mat_{n_0}(R)=R$.
Write
\[
 R_\mu:=\varinjlim_{i\geq0}(\Mat_{n_i}(R),\phi_{i+1}).
\]
The normalized matrix ranks $\rk_{n_i}$ are compatible with these maps
and induce a Sylvester matrix rank $\rk_\mu$ on $R_\mu$.
Adjoining the initial stage $R$ does not change the ranked direct limit,
since the stages indexed by $i\geq1$ form a cofinal subsystem.

Let $B_\mu$ have one vertex $v_i$ at each level $i\geq0$ and $d_i$ edges
from $v_{i-1}$ to $v_i$:
\begin{center}
\begin{tikzpicture}[x=1.65cm]
  \foreach \n in {0,1,2,3} {
    \fill (\n,0) circle [radius=1pt] node[below=9pt] {\(v_{\n}\)} coordinate (v\n);
  }
  \foreach \n/\m in {0/1,1/2,2/3} {
    \foreach \h in {-10,10} {
      \draw[bratteli edge] (v\n)
        .. controls ([xshift=15pt,yshift=\h pt]v\n)
        and ([xshift=-15pt,yshift=\h pt]v\m) .. (v\m);
    }
    \node[inner sep=0pt] at ($(v\n)!0.5!(v\m)$) {\(\vdots\)};
    \node[above=9pt] at ($(v\n)!0.5!(v\m)$) {\(d_{\m}\)};
  }
  \node at (3.5,0) {\(\cdots\)};
\end{tikzpicture}
\end{center}
The bundle labelled $d_i$ represents $d_i$ edges.  Then
\[
 p_i(v_i)=\prod_{j=1}^i d_j=n_i,
 \qquad A_i(B_\mu,R)=\Mat_{n_i}(R).
\]
The Bratteli connecting maps are exactly the maps $\phi_i$ above.
Normalization determines the unique harmonic function
$\alpha_i(v_i)=1$, and its weighted ranks are $\rk_{n_i}$.
Thus
\[
 (A(B_\mu,R),\rk_\alpha)\cong(R_\mu,\rk_\mu).
\]
For each path $p$ of length $i$, the cylinder measure is
$\mu_\alpha(C(p))=1/n_i$.
Since $n_i\mid n_{i+1}$ and $(n_i)$ is unbounded, $n_i\to\infty$.
Hence $\alpha$ is extreme and $(B_\mu,\alpha)$ is $\alpha$-aperiodic.

For the factor sequence $\mu=(2^k)_{k\geq1}$, the connecting maps are
\begin{equation}\label{eq:dyadic-system}
 \jmath_{2^k,2}\colon\Mat_{2^k}(R)\longrightarrow\Mat_{2^{k+1}}(R),
 \qquad x\longmapsto1_2\otimes x=\diag(x,x)
 \qquad(k\geq0).
\end{equation}
We denote the rank completion of $R_\mu$ with respect to
$\rk_\mu$ by
\[
 \mathcal M_{R,\rk}:=\overline{R_\mu}.
\]

\section{The uniqueness theorem}\label{sec:uniqueness}

Fix a Bratteli diagram $B$ and $\alpha\in\mathcal H(B)$.
We suppress $\alpha$ from the notation for the auxiliary maps.
The proof of \Cref{thm:main} uses homomorphisms of the
following form, with $1\leq m<n$ and $q,r\geq1$:
\begin{center}
\begin{tikzpicture}[x=2.6cm,>=Stealth]
  \node (A) at (0,0) {$A_m$};
  \node (B) at (1,0) {$\Mat_q(R)$};
  \node (C) at (2,0) {$A_n$};
  \node (D) at (3,0) {$\Mat_{qr}(R)$};
  \draw[->] (A) -- node[above] {$\rho$} (B);
  \draw[->] (B) -- node[above] {$\sigma$} (C);
  \draw[->] (C) -- node[above] {$\rho'$} (D);
  \draw[->] (A) to[bend left=32] node[above] {$\phi_{n,m}$} (C);
  \draw[->] (B) to[bend right=32] node[below] {$\jmath_{q,r}$} (D);
\end{tikzpicture}
\end{center}
Here $\jmath_{q,r}$ is the connecting map from
\Cref{subsec:factor-sequence-models}.
Extremality and $\alpha$-aperiodicity provide the estimates needed
to choose these maps with small rank distortions and small errors in
$\sigma\rho-\phi_{n,m}$ and $\rho'\sigma-\jmath_{q,r}$
(\Cref{lem:homogeneous,lem:tall,lem:finite-approximation}).
Iterating along cofinal sequences, with matrix sizes chosen to be
powers of $2$ and errors summable on each fixed matrix space,
gives a rank-preserving $R$-algebra isomorphism
\[
 \overline A_\alpha(B,R)\cong\mathcal M_{R,\rk}
\]
by \Cref{prop:summable}.

\subsection{Path distributions and aperiodicity}\label{subsec:asymptotic-homogeneity}

For $1\leq m<n$, $v\in V_m$, and $w\in V_n$, define
\begin{equation}\label{eq:beta}
 \beta_{w,v}^{(n,m)}
 :=\frac{p_m(v)a_{w,v}^{(n,m)}}{p_n(w)},
 \qquad
 \beta_w^{(n,m)}
 :=\bigl(\beta_{w,v}^{(n,m)}\bigr)_{v\in V_m}
 \in\Delta(V_m).
\end{equation}
The vector $\beta_w^{(n,m)}$ is the distribution of the level-$m$ vertex
of a uniformly chosen path from the root to $w$.
Equation \eqref{eq:path-count} gives
\[
 \sum_{v\in V_m}\beta_{w,v}^{(n,m)}=1.
\]

Equation \eqref{eq:harmonic-long} becomes
\begin{equation}\label{eq:barycenter}
 \alpha_m=\sum_{w\in V_n}\alpha_n(w)\beta_w^{(n,m)}.
\end{equation}
The coefficients also satisfy the composition rule, for \(1\leq k<r<n\)
\begin{equation}\label{eq:beta-composition}
 \beta_{w,u}^{(n,k)}
 =\sum_{z\in V_r}
 \beta_{w,z}^{(n,r)}\beta_{z,u}^{(r,k)}.
\end{equation}
Using the definition of the coefficients and path concatenation gives
\[
 \begin{aligned}
 \sum_{z\in V_r}\beta_{w,z}^{(n,r)}\beta_{z,u}^{(r,k)}
 &=\sum_{z\in V_r}
   \frac{p_r(z)a_{w,z}^{(n,r)}}{p_n(w)}
   \frac{p_k(u)a_{z,u}^{(r,k)}}{p_r(z)}\\
 &=\frac{p_k(u)}{p_n(w)}
   \sum_{z\in V_r}a_{w,z}^{(n,r)}a_{z,u}^{(r,k)}\\
 &=\frac{p_k(u)a_{w,u}^{(n,k)}}{p_n(w)}
 =\beta_{w,u}^{(n,k)}.
 \end{aligned}
\]
For a vector $z$ on the finite set $V_m$, we use $\|z\|_1=\sum_{v\in
V_m}|z(v)|$.

Extremality implies that these path distributions converge in weighted
average to $\alpha_m$ for each fixed $m$; compare
Vershik \cite[Section~2.5]{Vershik2014}.

\begin{lemma}\label{lem:homogeneous}
Let $B=(V,E)$ be a Bratteli diagram and let $\alpha\in\mathcal H(B)$ be extreme.
Then, for every fixed $m\geq1$,
\begin{equation}\label{eq:homogeneous}
 \lim_{n\to\infty}
 \sum_{w\in V_n}\alpha_n(w)
 \bigl\|\beta_w^{(n,m)}-\alpha_m\bigr\|_1=0.
\end{equation}
\end{lemma}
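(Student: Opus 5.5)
We translate the statement into the language of the central measure $\mu=\mu_\alpha$ and then apply the reverse martingale convergence theorem. Fix $m\geq1$ and $v\in V_m$. Let $f_v=\mathbf 1_{\{x\in X_B:\,r(x_m)=v\}}$, so that $\int f_v\,d\mu=\alpha_m(v)$ by \eqref{eq:harmonic-vector-from-measure}. For $n>m$, let $\mathcal F_n$ be the $\sigma$-algebra generated by the coordinates $x_{n+1},x_{n+2},\dots$ together with the vertex $r(x_n)$. Equivalently, $\mathcal F_n$ consists of Borel sets that are unions of sets of the form $[x]_n$; these are invariant under permutations of prefixes of length $n$. The sequence $(\mathcal F_n)$ decreases, and its intersection $\mathcal F_\infty$ is the tail $\sigma$-algebra of $\mathcal E_B$.

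The first step is to compute the conditional expectations. Centrality means that, conditionally on $\mathcal F_n$, the prefix $(x_1,\dots,x_n)$ is uniformly distributed over the $p_n(w)$ paths from the root to $w=r(x_n)$. To verify this, check the conditional formula on cylinders $C(q)$ with $q\in E_{0,N}$, $N>n$, using \eqref{eq:cylinder-measure}: all prefixes ending at $w$ contribute equal mass. Hence
\[
 \mathbb E_\mu[f_v\mid\mathcal F_n](x)=\frac{a^{(n,m)}_{w,v}\,p_m(v)}{p_n(w)}=\beta^{(n,m)}_{w,v},\qquad w=r(x_n),
\]
because the number of root-to-$w$ paths passing through $v$ at level $m$ is $p_m(v)a^{(n,m)}_{w,v}$. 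Since $\mu\{r(x_n)=w\}=\alpha_n(w)$, this gives
\[
 \sum_{w\in V_n}\alpha_n(w)\bigl|\beta^{(n,m)}_{w,v}-\alpha_m(v)\bigr|
 =\bigl\|\mathbb E_\mu[f_v\mid\mathcal F_n]-\alpha_m(v)\bigr\|_{L^1(\mu)}.
\]

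The second step applies Lévy's downward (reverse) martingale theorem. It gives $\mathbb E_\mu[f_v\mid\mathcal F_n]\to\mathbb E_\mu[f_v\mid\mathcal F_\infty]$ in $L^1(\mu)$; this also follows from $L^2$ convergence, since $0\leq f_v\leq1$. Extremality of $\alpha$ means that $\mu$ is tail ergodic by \Cref{prop:harmonic-central-measure}. Every $\mathcal F_\infty$-measurable set is a Borel union of tail classes, so it has measure $0$ or $1$. Therefore $\mathbb E_\mu[f_v\mid\mathcal F_\infty]=\int f_v\,d\mu=\alpha_m(v)$ almost everywhere. Summing over the finitely many $v\in V_m$ yields \eqref{eq:homogeneous}.

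The main obstacle is identifying $\mathcal F_\infty$ with tail-invariant sets. A set in $\mathcal F_\infty$ lies in every $\mathcal F_n$, so it is a union of $[x]_n$-classes for every $n$, and hence a union of $\mathcal E_B$-classes. This is exactly the notion used in the definition of tail ergodicity, so the inclusion should be routine. If instead one wants to avoid martingales, there is an alternative argument. Suppose \eqref{eq:homogeneous} fails along a subsequence. Split $V_n$ according to whether $\beta_{w,v}^{(n,m)}>\alpha_m(v)$. Each part gives a subprobability combination of the $\beta_w^{(n,\cdot)}$, and these are harmonic up to level $n$ by \eqref{eq:beta-composition}. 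Passing to a limit by the compactness argument of \Cref{lem:harmonic-simplex-nonempty} produces a nontrivial convex decomposition of $\alpha$, contradicting extremality. The martingale route is cleaner, and I would follow it.
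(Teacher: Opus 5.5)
Your argument is correct, but it takes a genuinely different route from the paper's. You identify $\beta^{(n,m)}_{r(x_n),v}$ with $\mathbb E_{\mu_\alpha}[f_v\mid\mathcal F_n]$. The sum in \eqref{eq:homogeneous} then becomes the $L^1(\mu_\alpha)$-distance of a reverse martingale from its mean, and L\'evy's downward theorem together with tail ergodicity finishes the proof. One small precision: verify the conditional-expectation identity on the $\mathcal F_n$-measurable sets $\{x:(x_{n+1},\dots,x_N)=(e_{n+1},\dots,e_N)\}$, which form a generating $\pi$-system. The cylinders $C(q)$ themselves generally do not lie in $\mathcal F_n$. Your remark that all prefixes ending at $s(e_{n+1})$ carry equal mass is exactly what that check requires.

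The paper never leaves the compact convex set $\mathcal H(B)$. Assuming the sum does not tend to zero, it uses Cauchy--Schwarz to find $v_*\in V_m$ and a subsequence on which the $\alpha_{n_j}$-variance of $w\mapsto\beta^{(n_j,m)}_{w,v_*}$ stays bounded below. From the centered coefficients it builds the signed vectors $h^{(j)}_k=\sum_w\alpha_{n_j}(w)c^{(j)}_w\beta^{(n_j,k)}_w$. Via \eqref{eq:beta-composition} these are coherent, have zero sum, and satisfy $|h^{(j)}_k|\leq\alpha_k$. A diagonal limit $h\neq0$ then makes $\alpha=\tfrac12(\alpha+\tfrac12h)+\tfrac12(\alpha-\tfrac12h)$ a nontrivial harmonic decomposition. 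This is essentially a signed version of the splitting argument in your fallback.

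The paper's proof is elementary and self-contained: it uses only the harmonic equations and compactness. It needs neither $\mu_\alpha$, nor martingale theory, nor the extremality--ergodicity correspondence of \Cref{prop:harmonic-central-measure}, whose proof the paper delegates to Vershik. Your proof is shorter and more conceptual, and it gives more: $\beta^{(n,m)}_{r(x_n)}\to\alpha_m$ for $\mu_\alpha$-almost every path. Its cost is the measure-theoretic input. You only need the easy implication from extremality to tail ergodicity, though, and it can be checked directly. Suppose a Borel union $A$ of tail classes had $0<\mu_\alpha(A)<1$. Finite prefix swaps preserve both $\mu_\alpha$ and $A$. Hence the normalized restrictions of $\mu_\alpha$ to $A$ and to $X_B\setminus A$ are distinct central measures that decompose $\mu_\alpha$ nontrivially.
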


\begin{proof}
Fix $m\geq1$, put $N=|V_m|$, and write
\[
 d_n:=\sum_{w\in V_n}\alpha_n(w)
       \|\beta_w^{(n,m)}-\alpha_m\|_1.
\]
Suppose that $d_n$ does not converge to zero.
There exist $\eta>0$ and a strictly increasing sequence $(n_j)_j$ such that
$d_{n_j}\geq\eta$ for every $j$.

Two applications of Cauchy--Schwarz, using
$\sum_w\alpha_{n_j}(w)=1$ and $|V_m|=N$, give
\[
 \begin{aligned}
 \eta^2\leq d_{n_j}^2
 &\leq\sum_{w\in V_{n_j}}\alpha_{n_j}(w)
       \|\beta_w^{(n_j,m)}-\alpha_m\|_1^2\\
 &\leq N\sum_{v\in V_m}\sum_{w\in V_{n_j}}\alpha_{n_j}(w)
       (\beta_{w,v}^{(n_j,m)}-\alpha_m(v))^2.
 \end{aligned}
\]
By averaging over $V_m$, for each $j$ there exists
$v_j\in V_m$ such that
\[
 \sum_{w\in V_{n_j}}\alpha_{n_j}(w)
       (\beta_{w,v_j}^{(n_j,m)}-\alpha_m(v_j))^2
 \geq\frac{\eta^2}{N^2}.
\]
Finiteness of $V_m$ yields a subsequence on which $v_j=v_*$ is constant;
retain the notation $n_j$ for this subsequence.
Set $c_w^{(j)}:=\beta_{w,v_*}^{(n_j,m)}-\alpha_m(v_*)$.  Then
\begin{equation}\label{eq:positive-barycentric-variance}
 \sum_{w\in V_{n_j}}\alpha_{n_j}(w)(c_w^{(j)})^2
 \geq\frac{\eta^2}{N^2}.
\end{equation}
Both $\beta_{w,v_*}^{(n_j,m)}$ and $\alpha_m(v_*)$ lie in $[0,1]$, so
$|c_w^{(j)}|\leq1$.  Moreover,
\begin{equation}\label{eq:centered-barycentric-coefficients}
 \sum_{w\in V_{n_j}}\alpha_{n_j}(w)c_w^{(j)}
 =\sum_w\alpha_{n_j}(w)\beta_{w,v_*}^{(n_j,m)}
   -\alpha_m(v_*)\sum_w\alpha_{n_j}(w)
 =\alpha_m(v_*)-\alpha_m(v_*)=0.
\end{equation}
The last equality uses \eqref{eq:barycenter} and normalization of the weights.

For $1\leq k<n_j$ and $u\in V_k$, define
\begin{equation}\label{eq:finite-coherent-perturbation}
 h_k^{(j)}(u):=\sum_{w\in V_{n_j}}
       \alpha_{n_j}(w)c_w^{(j)}\beta_{w,u}^{(n_j,k)}.
\end{equation}

For $1\leq k<r$, define the linear map
\[
 K_{k,r}\colon\mathbb R^{V_r}\longrightarrow\mathbb R^{V_k},
 \qquad
 (K_{k,r}x)(u)
 :=\sum_{z\in V_r}x(z)\beta_{z,u}^{(r,k)}.
\]

For $k<r<n_j$ and $u\in V_k$, interchanging finite sums gives
\[
 \begin{aligned}
 (K_{k,r}h_r^{(j)})(u)
 &=\sum_{z\in V_r}\sum_{w\in V_{n_j}}
   \alpha_{n_j}(w)c_w^{(j)}\beta_{w,z}^{(n_j,r)}
   \beta_{z,u}^{(r,k)}\\
 &=\sum_{w\in V_{n_j}}\alpha_{n_j}(w)c_w^{(j)}
   \left(\sum_{z\in V_r}\beta_{w,z}^{(n_j,r)}\beta_{z,u}^{(r,k)}\right)\\
 &=\sum_{w\in V_{n_j}}\alpha_{n_j}(w)c_w^{(j)}\beta_{w,u}^{(n_j,k)}
 =h_k^{(j)}(u).
 \end{aligned}
\]
The third equality is \eqref{eq:beta-composition}.
Thus, for \(k<r<n_j\)
\begin{equation}\label{eq:finite-coherence-h}
 h_k^{(j)}=K_{k,r}h_r^{(j)}.
\end{equation}
Also, each $\beta_w^{(n_j,k)}$ has total mass one, so
\[
 \sum_{u\in V_k}h_k^{(j)}(u)
 =\sum_{w\in V_{n_j}}\alpha_{n_j}(w)c_w^{(j)}
       \sum_{u\in V_k}\beta_{w,u}^{(n_j,k)}
 =\sum_{w\in V_{n_j}}\alpha_{n_j}(w)c_w^{(j)}=0
\]
by \eqref{eq:centered-barycentric-coefficients}.
Finally, the triangle inequality, $|c_w^{(j)}|\leq1$, and nonnegativity of
the coefficients give
$|h_k^{(j)}(u)|\leq\sum_w\alpha_{n_j}(w)|c_w^{(j)}|\beta_{w,u}^{(n_j,k)}$.
Using \eqref{eq:barycenter} at level $k$ yields
\begin{equation}\label{eq:finite-domination-h}
 |h_k^{(j)}(u)|\leq\sum_{w\in V_{n_j}}
       \alpha_{n_j}(w)\beta_{w,u}^{(n_j,k)}=\alpha_k(u).
\end{equation}

For fixed $k$ and $n_j>k$, \eqref{eq:finite-domination-h} gives
\[
 h_k^{(j)}\in\prod_{u\in V_k}[-\alpha_k(u),\alpha_k(u)],
\]
a compact subset of $\mathbb R^{V_k}$.
Successive subsequence extraction gives nested infinite index sets
$J_1\supseteq J_2\supseteq\cdots$ such that $h_k^{(j)}$ converges along
$J_k$.  Choose $j_\ell\in J_\ell$ with $j_\ell>j_{\ell-1}$ and
$n_{j_\ell}>\ell$.  For each fixed $k$, all terms with $\ell\geq k$
belong to $J_k$; hence $h_k^{(j_\ell)}\to h_k$.
Retain the index $j$ for this diagonal subsequence.

For fixed $k<r$, the equality \eqref{eq:finite-coherence-h} holds for all
sufficiently large $j$, since $n_j\to\infty$.
Continuity of the finite-dimensional linear map $K_{k,r}$,
together with the zero-sum identity and the coordinate bounds, yields
\begin{equation}\label{eq:coherent-tangent-vector}
 h_k=K_{k,r}h_r,\qquad
 \sum_{u\in V_k}h_k(u)=0,\qquad |h_k(u)|\leq\alpha_k(u).
\end{equation}
The limit is nonzero: by \eqref{eq:centered-barycentric-coefficients},
\[
 h_m^{(j)}(v_*)
 =\sum_w\alpha_{n_j}(w)c_w^{(j)}
       (c_w^{(j)}+\alpha_m(v_*))
 =\sum_w\alpha_{n_j}(w)(c_w^{(j)})^2
 \geq\frac{\eta^2}{N^2}.
\]
Passing to the limit gives
\[
 h_m(v_*)\geq\frac{\eta^2}{N^2}>0.
\]
Define $\gamma_k=\alpha_k+\tfrac12h_k$ and
$\delta_k=\alpha_k-\tfrac12h_k$.
For every $u\in V_k$, the coordinate bound gives
\[
 \gamma_k(u),\delta_k(u)
 \geq\alpha_k(u)-\tfrac12|h_k(u)|\geq\tfrac12\alpha_k(u)\geq0.
\]
Their total masses are
$\sum_u\alpha_k(u)\pm\tfrac12\sum_u h_k(u)=1$.
The harmonicity of $\alpha$ and the coherence of $h$ give, for $k<r$,
\[
 K_{k,r}(\alpha_r\pm\tfrac12h_r)
 =\alpha_k\pm\tfrac12h_k.
\]
In particular the one-step harmonic equations hold for both families,
so $\gamma,\delta\in\mathcal H(B)$.
Since $\gamma_m(v_*)-\delta_m(v_*)=h_m(v_*)>0$, the decomposition
$\alpha=\tfrac12(\gamma+\delta)$ contradicts extremality.
Hence $d_n\to0$.
\end{proof}

\begin{lemma}\label{lem:tall}
Assume $(B,\alpha)$ is $\alpha$-aperiodic.
For every fixed integer $q\geq1$,
\[
 \lim_{n\to\infty}
 \sum_{w\in V_n}\alpha_n(w)
 \frac{p_n(w)-q\lfloor p_n(w)/q\rfloor}{p_n(w)}=0.
\]
\end{lemma}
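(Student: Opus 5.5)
The plan is to bound each summand by splitting the vertices of $V_n$ according to whether $p_n(w)$ is small or large, with the threshold governed by $\alpha$-aperiodicity. Fix $q\geq1$ and $\varepsilon>0$. For every $w$, the remainder $p_n(w)-q\lfloor p_n(w)/q\rfloor$ lies in $\{0,1,\ldots,q-1\}$. It is also at most $p_n(w)$. Hence the ratio
\[
 \frac{p_n(w)-q\lfloor p_n(w)/q\rfloor}{p_n(w)}
\]
is always at most $1$, and it is at most $(q-1)/p_n(w)$.

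Next, choose an integer $L\geq1$ with $(q-1)/L\leq\varepsilon$, for instance $L=\lceil q/\varepsilon\rceil$. Split the sum over $w\in V_n$ into the vertices with $p_n(w)<L$ and those with $p_n(w)\geq L$. On the first set, bound the ratio by $1$, so this part of the sum is at most $\sum_{p_n(w)<L}\alpha_n(w)$. By \eqref{eq:alpha-aperiodic}, this quantity tends to zero as $n\to\infty$. On the second set, bound the ratio by $(q-1)/L\leq\varepsilon$. Since $\sum_w\alpha_n(w)=1$, this part is at most $\varepsilon$.

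Combining the two estimates gives
\[
 \limsup_{n\to\infty}\sum_{w\in V_n}\alpha_n(w)
 \frac{p_n(w)-q\lfloor p_n(w)/q\rfloor}{p_n(w)}\leq\varepsilon .
\]
The sum is nonnegative and $\varepsilon$ was arbitrary, so the limit is zero.

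There is no real obstacle in this argument. The only points that need care are that the remainder is less than $q$ and that the ratio never exceeds $1$, which covers the case $p_n(w)<q$, where $\lfloor p_n(w)/q\rfloor=0$ and the ratio equals $1$ exactly. Extremality of $\alpha$ is not needed here; only $\alpha$-aperiodicity is used.
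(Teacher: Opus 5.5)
Your proof is correct and follows essentially the same route as the paper: bound the ratio by $1$ on vertices with $p_n(w)<L$ and by a $q/L$-type bound on the rest, then use $\alpha$-aperiodicity for the first part and $\sum_w\alpha_n(w)=1$ for the second. Your sharper bound $(q-1)/L$ in place of the paper's $q/L$ is immaterial.
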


\begin{proof}
The remainder $s_n(w)=p_n(w)-q\lfloor p_n(w)/q\rfloor$ satisfies
$0\leq s_n(w)<q$ and $s_n(w)\leq p_n(w)$.
For fixed $L\geq1$, the ratio $s_n(w)/p_n(w)$ is bounded by $1$
when $p_n(w)<L$ and by $q/L$ otherwise.  Therefore
\[
 \begin{aligned}
 0\leq\sum_w\alpha_n(w)\frac{s_n(w)}{p_n(w)}
 &\leq\sum_{p_n(w)<L}\alpha_n(w)
   +\frac qL\sum_{p_n(w)\geq L}\alpha_n(w)\\
 &\leq\sum_{p_n(w)<L}\alpha_n(w)+\frac qL.
 \end{aligned}
\]
Given $\varepsilon>0$, choose $L$ with $q/L<\varepsilon/2$.
For this $L$, \eqref{eq:alpha-aperiodic} bounds the first sum by
$\varepsilon/2$ for all sufficiently large $n$, proving convergence.
\end{proof}

\subsection{Construction of the auxiliary homomorphisms}
\label{subsec:finite-matrix-models}

A possibly nonunital ring homomorphism $f\colon S\to T$ between
unital $R$-algebras is called \emph{$R$-compatible} if
\[
 f\bigl(\iota_S(r)x\iota_S(s)\bigr)
 =\iota_T(r)f(x)\iota_T(s),
 \qquad \forall r,s\in R,\ \forall x\in S.
\]
A unital ring homomorphism is $R$-compatible if and only if it is
an $R$-algebra homomorphism.

Copying matrix blocks and inserting zero blocks preserve the left and
right coefficient actions.  Conjugation by a permutation matrix is also
$R$-compatible, since
\[
 P(1_k\otimes r)=(1_k\otimes r)P,
 \qquad \forall r\in R
\]
for every permutation matrix $P\in\Mat_k(R)$.
The auxiliary homomorphisms below are compositions of these
operations and are therefore $R$-compatible.

\medskip
\paragraph{\textbf{The map $\rho$.}}
Fix a level $m\geq1$ and a positive integer $q$.
For $v\in V_m$, put
\[
 c_{m,q}(v)
 :=\left\lfloor\frac{q\alpha_m(v)}{p_m(v)}\right\rfloor,
\]
and set
\begin{equation}\label{eq:floor-c}
 \lambda_{m,q}(v)
 :=\frac{c_{m,q}(v)p_m(v)}q,
 \qquad
 \eta_{m,q}
 :=1-\sum_{v\in V_m}\lambda_{m,q}(v).
\end{equation}
Since
\[
 q\eta_{m,q}
 =q-\sum_{v\in V_m}c_{m,q}(v)p_m(v)
\]
is a nonnegative integer, the formula
\[
 \rho_{m,q}\colon A_m\longrightarrow \Mat_q(R),
 \qquad
 \rho_{m,q}((x_v)_v)
 :=\diag\bigl((1_{c_{m,q}(v)}\otimes x_v)_{v\in V_m},0_{q\eta_{m,q}}\bigr)
\]
defines a ring homomorphism.
The summands follow the vertex order fixed in the preliminaries; terms
with $c_{m,q}(v)=0$ are omitted.  Changing this order conjugates
$\rho_{m,q}$ by a permutation matrix.
For a permutation matrix $P\in\Mat_q(R)$, put
\[
 \rho_{m,q}^{P}:=\Ad(P)\rho_{m,q}.
\]

\medskip
\paragraph{\textbf{The map $\sigma$.}}
Fix $1\leq m<n$ and $q\geq1$.
For $w\in V_n$, put
\[
 t_w:=\left\lfloor\frac{p_n(w)}q\right\rfloor,
 \qquad
 s_w:=p_n(w)-t_wq,
 \qquad
 \theta_w:=\frac{t_wq}{p_n(w)}=1-\frac{s_w}{p_n(w)},
\]
and define
\[
 \sigma_{n,q,w}^0(y)
 :=\diag(1_{t_w}\otimes y,0_{s_w})
 \in\Mat_{p_n(w)}(R).
\]
Here $1_0\otimes y$ and $0_0$ are interpreted as empty blocks.

For $w\in V_n$ and $v\in V_m$, let $J_{w,v,a}$,
$1\leq a\leq a_{w,v}^{(n,m)}$, be the coordinate inclusion
matrices of the copies of $x_v$ in \eqref{eq:bratteli-composite-map},
and let $K_{w,v,b}$, $1\leq b\leq t_wc_{m,q}(v)$, be those in
$\sigma_{n,q,w}^0\rho_{m,q}(x)$.
Thus
\begin{equation}\label{eq:alignment-coordinate-copies}
 \begin{aligned}
 (\phi_{n,m}(x))_w
 &=\sum_{v\in V_m}\sum_{a=1}^{a_{w,v}^{(n,m)}}
       J_{w,v,a}x_vJ_{w,v,a}^{T},\\
 \sigma_{n,q,w}^0\rho_{m,q}(x)
 &=\sum_{v\in V_m}\sum_{b=1}^{t_wc_{m,q}(v)}
       K_{w,v,b}x_vK_{w,v,b}^{T}.
 \end{aligned}
\end{equation}
The matrices $J_{w,v,a}$ use the path order in
\eqref{eq:bratteli-composite-map}, induced by the orders on
$E_{m+1},\ldots,E_n$.  The matrices $K_{w,v,b}$ use the coordinate
positions in $\sigma_{n,q,w}^0\rho_{m,q}$.
Enumerate the copies of each $x_v$ in their respective coordinate order.
An \emph{$(m,n,q)$-alignment} is a family of permutation matrices
$U=(U_w)_{w\in V_n}$ satisfying
\begin{equation}\label{eq:alignment-coordinate-match}
 U_wK_{w,v,a}=J_{w,v,a},
 \qquad \forall v\in V_m,\ \forall 1\leq a\leq \min\{a_{w,v}^{(n,m)},t_wc_{m,q}(v)\}.
\end{equation}
Such matrices exist: the prescribed columns define a bijection between
two coordinate subsets of equal cardinality, and any bijection of their
complements completes it.  In particular, $U$ is independent of $x$ and
does not change the prescribed connecting map.
For an alignment $U$, define
\[
 \sigma_{m,n,q}^{U}(y)
 :=\bigl(\Ad(U_w)\sigma_{n,q,w}^0(y)\bigr)_{w\in V_n}
 \in A_n.
\]
Finally, when $P\in\Mat_q(R)$ is a permutation matrix, put
\begin{equation}\label{eq:aligned-packing-P}
 \sigma_{m,n,q}^{U,P}
 :=\sigma_{m,n,q}^{U}\Ad(P^{-1}).
\end{equation}
Then
\[
 \sigma_{m,n,q}^{U,P}\rho_{m,q}^{P}
 =\sigma_{m,n,q}^{U}\rho_{m,q}.
\]
The composition is independent of $P$.  The map $\rho'$ constructed
below has the form $\rho_{n,qr}^{P'}$ and is used as the next map
in the recursive construction.

\medskip
\paragraph{\textbf{The map $\rho'$.}}
Fix an integer $r\geq1$ and put
\begin{equation}\label{eq:return-multiplicity}
 D_{n,q,r}:=\sum_{w\in V_n}c_{n,qr}(w)t_w.
\end{equation}
The multiplicity satisfies
\[
 \frac{D_{n,q,r}}r
 =\sum_{w\in V_n}\lambda_{n,qr}(w)\theta_w
 \leq\sum_{w\in V_n}\lambda_{n,qr}(w)
 =1-\eta_{n,qr}\leq1.
\]
Consider $\rho_{n,qr}\colon A_n\to\Mat_{qr}(R)$.
The alignment $U_w$, chosen at the level of the $x_v$-blocks, need not preserve
the $q\times q$ block decomposition of $\sigma_{n,q,w}^0(y)$.
Define the permutation matrix
\[
 Q_1=\diag\bigl((1_{c_{n,qr}(w)}\otimes U_w^{-1})_{w\in V_n},
                  1_{qr\eta_{n,qr}}\bigr).
\]
Conjugation by $Q_1$ transforms
$\rho_{n,qr}\sigma_{m,n,q}^{U}(y)$ into a block diagonal matrix with
$c_{n,qr}(w)t_w$ copies of $y$ for each $w$.
Choose a permutation matrix $Q_2$, independent of $y$, that places these
$D_{n,q,r}$ copies in the first $D_{n,q,r}$ blocks of size $q$,
preserving the internal order of each copy.  The remaining coordinates are zero,
since
\[
 qr-\sum_{w\in V_n}c_{n,qr}(w)p_n(w)
 +\sum_{w\in V_n}c_{n,qr}(w)s_w
 =(r-D_{n,q,r})q.
\]
Define
\[
 \widetilde\rho_{m,n,q,r}^{U}
 :=\Ad(Q_2Q_1)\rho_{n,qr}
 \colon A_n\longrightarrow\Mat_{qr}(R).
\]
For the permutation $P$ used in $\rho_{m,q}^{P}$, define
\begin{equation}\label{eq:return-map}
 \rho_{m,n,q,r}^{U,P}
 :=\Ad(1_r\otimes P)\widetilde\rho_{m,n,q,r}^{U}.
\end{equation}
More explicitly, put $P'=(1_r\otimes P)Q_2Q_1$.
It is a product of permutation matrices, and the composition rule
$\Ad(A)\Ad(B)=\Ad(AB)$ gives
$\rho_{m,n,q,r}^{U,P}=\rho_{n,qr}^{P'}$.
For $z=P^{-1}yP$, the definitions of $Q_1,Q_2$ give
\[
 \widetilde\rho_{m,n,q,r}^{U}\sigma_{m,n,q}^{U}(z)
 =\diag(1_{D_{n,q,r}}\otimes z,0_{(r-D_{n,q,r})q}).
\]
Conjugation by $1_r\otimes P$ therefore yields
\begin{equation}\label{eq:return-map-composition}
 \rho_{m,n,q,r}^{U,P}\sigma_{m,n,q}^{U,P}(y)
 =\diag(1_{D_{n,q,r}}\otimes y,0_{(r-D_{n,q,r})q}),
 \qquad \forall y\in\Mat_q(R).
\end{equation}

\subsection{Rank estimates}

For a possibly nonunital homomorphism $f\colon S\to T$
between rings equipped with Sylvester matrix rank functions, and for $d\geq1$, set
\begin{equation}\label{eq:rank-distortion}
 \operatorname{dist}_d(f)
 :=\sup_{X\in \Mat_d(S)}
 \bigl|\rk_T(f(X))-\rk_S(X)\bigr|.
\end{equation}
Write $\operatorname{dist}(f):=\operatorname{dist}_1(f)$.
Since $f(1)$ is idempotent, \eqref{eq:idempotent-complement-rank} gives
\begin{equation}\label{eq:identity-controlled-by-distortion}
 \rk_T(1-f(1))
 =1-\rk_T(f(1))
 \leq\operatorname{dist}(f).
\end{equation}

For later use, set
\begin{equation}\label{eq:finite-error-parameters}
 \xi_{m,n}
 :=\sum_{w\in V_n}\alpha_n(w)
   \|\beta_w^{(n,m)}-\alpha_m\|_1,
 \qquad
 \tau_{n,q}
 :=\sum_{w\in V_n}\alpha_n(w)(1-\theta_w).
\end{equation}

The maps above are constructed by copying matrix blocks, inserting zero blocks,
and conjugating by permutation matrices.
By block additivity and permutation invariance, their ranks are
computed from the block multiplicities.
The composition errors are
estimated by counting the copies remaining after matched copies
cancel, with the appropriate normalization and vertex weights.
We have the following rank estimates.

\begin{lemma}
\label{lem:finite-model-estimates}
Fix $1\leq m<n$, $q,r\geq1$, a permutation matrix $P\in\Mat_q(R)$, and an
$(m,n,q)$-alignment $U$.
Then
\begin{equation}\label{eq:eta}
 0\leq\eta_{m,q}\leq\frac1q\sum_{v\in V_m}p_m(v).
\end{equation}
For every $d\geq1$,
\begin{equation}\label{eq:amplified-eta}
 \operatorname{dist}_d(\rho_{m,q}^{P})=d\eta_{m,q},
 \qquad
 \operatorname{dist}_d(\sigma_{m,n,q}^{U,P})=d\tau_{n,q},
 \qquad
 \operatorname{dist}_d(\rho_{m,n,q,r}^{U,P})=d\eta_{n,qr}.
\end{equation}
For every $X\in\Mat_d(A_m)$,
\begin{equation}\label{eq:parameter-first-square}
 \rk_{\alpha,n}\bigl(
   \phi_{n,m}(X)-
   \sigma_{m,n,q}^{U,P}\rho_{m,q}^{P}(X)
 \bigr)
 \leq d\bigl(\xi_{m,n}+\eta_{m,q}+\tau_{n,q}\bigr).
\end{equation}
For every $Y\in\Mat_d(\Mat_q(R))$,
\begin{equation}
   \label{eq:parameter-second-square}
  \rk_{qr}\bigl(
   \jmath_{q,r}(Y)-
   \rho_{m,n,q,r}^{U,P}\sigma_{m,n,q}^{U,P}(Y)
 \bigr)
 \leq d\bigl(\eta_{n,qr}+\tau_{n,q}\bigr).
\end{equation}
\end{lemma}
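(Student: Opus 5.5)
The plan is to compute every quantity directly from block multiplicities. All the maps involved copy diagonal blocks, insert zero blocks and conjugate by permutation matrices, and they act entrywise on $\Mat_d$. So by block additivity \ref{item:sm3} and permutation invariance, the rank of an image is an explicit nonnegative combination of the numbers $\rk(X_v)$ (resp.\ $\rk(Y)$). Here $X_v\in\Mat_d(\Mat_{p_m(v)}(R))$, so $0\leq\rk_{p_m(v)}(X_v)\leq d$.

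\textbf{Bound \eqref{eq:eta}.} From the floor, $0\leq \alpha_m(v)-\lambda_{m,q}(v)=\frac{p_m(v)}{q}\bigl(\frac{q\alpha_m(v)}{p_m(v)}-c_{m,q}(v)\bigr)<\frac{p_m(v)}q$. Summing over $v$ gives \eqref{eq:eta}, since $\eta_{m,q}=\sum_v(\alpha_m(v)-\lambda_{m,q}(v))$.

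\textbf{Distortions \eqref{eq:amplified-eta}.} For $\rho^P_{m,q}$, conjugation by $P$ is harmless, and $\rk_q(\rho_{m,q}(X))=\frac1q\sum_v c_{m,q}(v)\rk(X_v)=\sum_v\lambda_{m,q}(v)\rk_{p_m(v)}(X_v)$. The difference from $\rk_{\alpha,m}(X)$ is therefore $\sum_v(\alpha_m(v)-\lambda_{m,q}(v))\rk_{p_m(v)}(X_v)$. This lies in $[0,d\eta_{m,q}]$, and it equals $d\eta_{m,q}$ at $X=1_d$; this gives the supremum exactly. Likewise $\rk_{\alpha,n}(\sigma^{U,P}_{m,n,q}(Y))=\sum_w\alpha_n(w)\frac{t_w\rk(Y)}{p_n(w)}=\sum_w\alpha_n(w)\theta_w\rk_q(Y)$. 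The defect is $(1-\sum_w\alpha_n(w)\theta_w)\rk_q(Y)=\tau_{n,q}\rk_q(Y)$, with maximum $d\tau_{n,q}$ at $Y=1_d$. Since $\rho^{U,P}_{m,n,q,r}=\rho^{P'}_{n,qr}$, the third formula is the first one at level $n$ with $qr$ in place of $q$.

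\textbf{First square \eqref{eq:parameter-first-square}.} I work componentwise at $w\in V_n$ using \eqref{eq:alignment-coordinate-copies} and the identity $\sigma^{U,P}\rho^P=\sigma^U\rho$. Conjugating by $U_w$ sends $K_{w,v,a}$ to $J_{w,v,a}$ for the first $\min\{a^{(n,m)}_{w,v},t_wc_{m,q}(v)\}$ indices. Those terms cancel exactly, and the difference is a signed sum of $|a^{(n,m)}_{w,v}-t_wc_{m,q}(v)|$ unmatched terms of the form $\pm J x_vJ^T$ or $\pm U_wKx_vK^TU_w^{-1}$. Subadditivity and \ref{item:sm2} bound its rank by $d\sum_v|a^{(n,m)}_{w,v}-t_wc_{m,q}(v)|\,p_m(v)$. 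Dividing by $p_n(w)$ and weighting by $\alpha_n(w)$ gives the bound $d\sum_w\alpha_n(w)\sum_v|\beta^{(n,m)}_{w,v}-\theta_w\lambda_{m,q}(v)|$, because $t_wc_{m,q}(v)p_m(v)/p_n(w)=\theta_w\lambda_{m,q}(v)$. Then I split
\[
 |\beta_{w,v}^{(n,m)}-\theta_w\lambda_{m,q}(v)|
 \leq|\beta_{w,v}^{(n,m)}-\alpha_m(v)|+(\alpha_m(v)-\lambda_{m,q}(v))+(1-\theta_w)\lambda_{m,q}(v).
\]
Summing, and using $\sum_v\lambda_{m,q}(v)\leq1$, gives $\xi_{m,n}+\eta_{m,q}+\tau_{n,q}$.

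\textbf{Second square \eqref{eq:parameter-second-square}.} By \eqref{eq:return-map-composition}, the difference is $\diag(0_{D_{n,q,r}q},1_{r-D_{n,q,r}}\otimes Y)$. Its normalized rank is at most $\frac{(r-D_{n,q,r})qd}{qr}=d(1-D_{n,q,r}/r)$. Now
\[
 1-\frac{D_{n,q,r}}r=1-\sum_w\lambda_{n,qr}(w)\theta_w=\eta_{n,qr}+\sum_w\lambda_{n,qr}(w)(1-\theta_w)\leq\eta_{n,qr}+\tau_{n,q},
\]
using $\lambda_{n,qr}\leq\alpha_n$.

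\textbf{Main obstacle.} The only delicate point is the bookkeeping in the first square. One must check that the alignment makes the matched copies occupy literally identical coordinates, so that they cancel exactly. The unmatched copies then contribute at most $\rk(X_v)\leq dp_m(v)$ each, whatever their overlaps, by subadditivity. Everything else is elementary arithmetic with the floors.
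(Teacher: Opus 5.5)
Your proposal is correct and follows the paper's proof essentially step for step. It uses the same block-multiplicity computation of the three distortions, with equality attained at the identity, and the same alignment cancellation and three-term triangle-inequality split for the first square. For the second square it uses the same identity $1-D_{n,q,r}/r=\eta_{n,qr}+\sum_w\lambda_{n,qr}(w)(1-\theta_w)\leq\eta_{n,qr}+\tau_{n,q}$.
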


\begin{proof}
The floor inequality gives, for every $v\in V_m$,
  \[
\frac{c_{m,q}(v)p_m(v)}q\leq\alpha_m(v)<\frac{p_m(v)}q + \frac{c_{m,q}(v)p_m(v)}q,
\]
Equivalently,
\[
0\leq\alpha_m(v) -  \lambda_{m,q}(v)<\frac{p_m(v)}q.
\]
Moreover,
\[
  \eta_{m,q}
  = 1- \sum_{v\in V_m}\lambda_{m,q}(v)
 =\sum_{v\in V_m}\bigl(\alpha_m(v)-\lambda_{m,q}(v)\bigr)\leq\frac1q\sum_{v\in V_m}p_m(v).
\]

For $X\in\Mat_d(A_m)$, by definition we have
\[
  \rho_{m,q}^{P}(X)
  =(1_d\otimes P)\rho_{m,q}(X)(1_d\otimes P^{-1}).
\]
Block additivity and permutation invariance give
\[
 \begin{aligned}
 \rk_q(\rho_{m,q}^{P}(X))
   & =  \rk_q (\rho_{m,q}(X))\\
   & =\frac1q\sum_{v\in V_m}c_{m,q}(v)\rk(X_v)\\
 &=\sum_{v\in V_m}\frac{c_{m,q}(v)p_m(v)}q
        \rk_{p_m(v)}(X_v)\\
 &=\sum_{v\in V_m}\lambda_{m,q}(v)\rk_{p_m(v)}(X_v).
 \end{aligned}
\]
Since
\[
\rk_{\alpha,m}(X)-\rk_q(\rho_{m,q}^{P}(X))=\sum_v(\alpha_m(v)-\lambda_{m,q}(v))\rk_{p_m(v)}(X_v),
\]
we have
\[
0\leq \rk_{\alpha,m}(X)-\rk_q(\rho_{m,q}^{P}(X))\leq d\eta_{m,q}.
\]
The first inequality follows from
$0\leq\lambda_{m,q}(v)\leq\alpha_m(v)$, and the second from
$0\leq\rk_{p_m(v)}(X_v)\leq d$.
At $X=1_d$, every component satisfies $\rk_{p_m(v)}(X_v)=d$,
so the bound is attained.  Thus
\[
 \operatorname{dist}_d(\rho_{m,q}^{P})=d\eta_{m,q}.
\]
Since $\rho_{m,n,q,r}^{U,P}=\rho_{n,qr}^{P'}$, the same calculation
with $(n,qr,P')$ gives, for $Z\in\Mat_d(A_n)$,
\[
  \sum_{w\in V_n}(\alpha_n(w)-\lambda_{n,qr}(w))
  \rk_{p_n(w)}(Z_w)\leq d\eta_{n,qr}.
\]
Again $Z=1_d$ attains equality, so
\[
\operatorname{dist}_d(\rho_{m,n,q,r}^{U,P})=d\eta_{n,qr}.
\]

For $Y\in\Mat_d(\Mat_q(R))$, precomposition by $\Ad(P^{-1})$
preserves its rank.
After flattening and conjugation by a permutation matrix independent of $Y$,
the $w$-component of $\sigma_{m,n,q}^{U,P}(Y)$ is block diagonal with
$t_w$ copies of $Y$ and a zero block.  Hence
\[
 \rk_{\alpha,n}(\sigma_{m,n,q}^{U,P}(Y))
 =\sum_w\frac{\alpha_n(w)}{p_n(w)}t_w\rk(Y)
 =\rk_q(Y)\sum_w\alpha_n(w)\frac{t_wq}{p_n(w)}.
\]
Thus,
\[
\rk_q(Y) - \rk_{\alpha,n}(\sigma_{m,n,q}^{U,P}(Y)) =
\rk_q(Y)\left(1-\sum_w\alpha_n(w)\frac{t_wq}{p_n(w)}\right) =
\rk_q(Y)\sum_w\alpha_n(w) (1-\theta_w),
\]
the last equality using $\sum_w\alpha_n(w)=1$.
Since $0\leq\rk_q(Y)\leq d$, this difference lies in $[0,d\tau_{n,q}]$, and the bound is
attained at $Y=1_d\in\Mat_d(\Mat_q(R))$.
This proves
\[
\operatorname{dist}_d(\sigma_{m,n,q}^{U,P})=d\tau_{n,q}.
\]

For \eqref{eq:parameter-first-square}, cancellation of
$\Ad(P^{-1})\Ad(P)$ in \eqref{eq:aligned-packing-P} reduces the composition to
$\sigma_{m,n,q}^{U}\rho_{m,q}$.
Put $\widehat J_{v,a}=1_d\otimes J_{w,v,a}$ and
$\widehat K_{v,b}=1_d\otimes(U_wK_{w,v,b})$ for each fixed $w$.
By \eqref{eq:alignment-coordinate-match},
$\widehat J_{v,a}=\widehat K_{v,a}$ for
$1\leq a\leq\min\{a_{w,v}^{(n,m)},t_wc_{m,q}(v)\}$.
Hence \eqref{eq:alignment-coordinate-copies}, applied entrywise, yields
\[
 \begin{aligned}
 &(\phi_{n,m}(X)-\sigma_{m,n,q}^{U}\rho_{m,q}(X))_w\\
 &=\sum_v\sum_{t_wc_{m,q}(v)<a\leq a_{w,v}^{(n,m)}}
       \widehat J_{v,a}X_v\widehat J_{v,a}^{T}\\
 &\quad-\sum_v\sum_{a_{w,v}^{(n,m)}<b\leq t_wc_{m,q}(v)}
       \widehat K_{v,b}X_v\widehat K_{v,b}^{T}.
 \end{aligned}
\]
For each $v$, the two sums contain altogether
$|a_{w,v}^{(n,m)}-t_wc_{m,q}(v)|$ terms.
Therefore,
\[
  \rk\bigl((\phi_{n,m}(X)-\sigma_{m,n,q}^{U}\rho_{m,q}(X))_w\bigr)\leq\sum_v|a_{w,v}^{(n,m)}-t_wc_{m,q}(v)|\rk(X_v).
\]
The flattened matrix $X_v$ has size $dp_m(v)\times dp_m(v)$, so
$\rk(X_v)\leq dp_m(v)$.  Therefore
\[
 \begin{aligned}
 \frac1{p_n(w)}\sum_v
 |a_{w,v}^{(n,m)}-t_wc_{m,q}(v)|\rk(X_v)
 &\leq d\sum_v
 \left|\frac{p_m(v)a_{w,v}^{(n,m)}}{p_n(w)}
       -\frac{p_m(v)t_wc_{m,q}(v)}{p_n(w)}\right|\\
 &=d\sum_v
       |\beta_{w,v}^{(n,m)}-\theta_w\lambda_{m,q}(v)|.
 \end{aligned}
\]
The last equality uses the definitions of $\beta$, $\theta_w$, and
$\lambda_{m,q}(v)$.

The triangle inequality gives
\[
 |\beta_{w,v}^{(n,m)}-\theta_w\lambda_{m,q}(v)|
 \leq|\beta_{w,v}^{(n,m)}-\alpha_m(v)|
       +|\alpha_m(v)-\lambda_{m,q}(v)|+(1-\theta_w)\lambda_{m,q}(v).
\]
The last term is nonnegative since $0\leq\theta_w\leq1$ and
$\lambda_{m,q}(v)\geq0$.

Summation over $v$ yields
\[
 \begin{aligned}
 \sum_v|\beta_{w,v}^{(n,m)}-\theta_w\lambda_{m,q}(v)|
 &\leq\|\beta_w^{(n,m)}-\alpha_m\|_1
       +\eta_{m,q}+(1-\theta_w)(1-\eta_{m,q})\\
 &\leq\|\beta_w^{(n,m)}-\alpha_m\|_1
       +\eta_{m,q}+(1-\theta_w).
 \end{aligned}
\]
The first inequality uses
$\sum_v|\alpha_m(v)-\lambda_{m,q}(v)|=\eta_{m,q}$ and
$\sum_v\lambda_{m,q}(v)=1-\eta_{m,q}$; the second uses
$0\leq\eta_{m,q}\leq1$.
Multiplying by $\alpha_n(w)$ and summing over $w\in V_n$ proves
\eqref{eq:parameter-first-square}.

Finally, apply \eqref{eq:return-map-composition} to
$Y=(y_{ij})\in\Mat_d(\Mat_q(R))$.
In the $(i,j)$-entry, the difference from $\jmath_{q,r}(Y)$ has its first $D_{n,q,r}$ copies equal to zero and
its remaining $r-D_{n,q,r}$ copies equal to $y_{ij}$.
The same coordinate permutation on rows and columns, ordering coordinates
first by the copy index and then by the matrix index, expresses the
difference as a block diagonal matrix with $r-D_{n,q,r}$ copies of $Y$ and a zero block.
Thus
\[
\rk_{qr}\bigl(
   \jmath_{q,r}(Y)-
   \rho_{m,n,q,r}^{U,P}\sigma_{m,n,q}^{U,P}(Y)
 \bigr)
 =\frac{r-D_{n,q,r}}r\rk_q(Y).
\]
Moreover,
\[
1-\frac{D_{n,q,r}}r
 =1-\sum_{w\in V_n}\lambda_{n,qr}(w)\theta_w
 =\eta_{n,qr}
   +\sum_{w\in V_n}\lambda_{n,qr}(w)(1-\theta_w)
 \leq\eta_{n,qr}+\tau_{n,q},
\]
because $\lambda_{n,qr}(w)\leq\alpha_n(w)$.
Since $\rk_q(Y)\leq d$, this proves
\eqref{eq:parameter-second-square}.
\end{proof}

The preceding estimates allow the parameters to be chosen so that
the rank distortions and composition errors are arbitrarily small.

\begin{lemma}\label{lem:finite-approximation}
Fix positive integers $m,q$ and positive numbers $\varepsilon,\kappa$.
Suppose that $\alpha$ is extreme and $(B,\alpha)$ is $\alpha$-aperiodic.
Assume that
\begin{equation}\label{eq:input-floor-error}
 \eta_{m,q}<\varepsilon.
\end{equation}
Let $P\in\Mat_q(R)$ be a permutation matrix and let
$\rho=\rho_{m,q}^{P}\colon A_m\to\Mat_q(R)$.
Then there are $n$ with $m<n$, an integer $r\geq1$, and homomorphisms
\[
  \sigma\colon \Mat_q(R)\longrightarrow A_n,
  \qquad
  \rho'\colon A_n\longrightarrow \Mat_{qr}(R),
\]
with $\rho'=\rho_{n,qr}^{P'}$ for a permutation matrix
$P'\in\Mat_{qr}(R)$.
For every $d\geq1$, these maps satisfy
\begin{align}
 \rk_{\alpha,n}\bigl(\phi_{n,m}(X)-\sigma\rho(X)\bigr)
 &<3d\varepsilon,\qquad \forall X\in\Mat_d(A_m),
 \label{eq:first-square}\\
 \rk_{qr}\bigl(\jmath_{q,r}(Y)-\rho'\sigma(Y)\bigr)
 &<2d\varepsilon,\qquad \forall Y\in\Mat_d(\Mat_q(R)).
 \label{eq:second-square}
\end{align}
Moreover,
\begin{equation}\label{eq:finite-approximation-distortions}
 \operatorname{dist}_d(\sigma)=d\tau_{n,q}<d\varepsilon,
 \qquad
 \operatorname{dist}_d(\rho')=d\eta_{n,qr}
 <d\min\{\varepsilon,\kappa\}.
\end{equation}
All three homomorphisms $\rho$, $\sigma$, and $\rho'$ are $R$-compatible.
The indices $n$ and $r$ may be required to exceed any given bounds.
If $q$ is a power of $2$, then $r$ may also be chosen to be a power of $2$.
\end{lemma}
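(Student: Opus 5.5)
The plan is to take $\sigma=\sigma_{m,n,q}^{U,P}$ and $\rho'=\rho_{m,n,q,r}^{U,P}=\rho_{n,qr}^{P'}$ with $P'=(1_r\otimes P)Q_2Q_1$, for a suitable $n$, an $(m,n,q)$-alignment $U$ (which exists for every $n>m$, as noted after \eqref{eq:alignment-coordinate-match}), and a suitable $r$. The estimates then all follow from \Cref{lem:finite-model-estimates}, so the work reduces to choosing the parameters. $R$-compatibility is immediate: all three maps are composites of block copying, insertion of zero blocks, and permutation conjugations, which were observed to be $R$-compatible.

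\emph{Choice of $n$.} By \Cref{lem:homogeneous}, $\xi_{m,n}\to0$ as $n\to\infty$. By \Cref{lem:tall}, $\tau_{n,q}=\sum_w\alpha_n(w)s_w/p_n(w)\to0$. Hence we can fix $n>m$, exceeding any prescribed bound, with
\[
\xi_{m,n}<\varepsilon,\qquad \tau_{n,q}<\varepsilon .
\]
Together with the hypothesis $\eta_{m,q}<\varepsilon$, \eqref{eq:parameter-first-square} gives the bound $d(\xi_{m,n}+\eta_{m,q}+\tau_{n,q})<3d\varepsilon$, which is \eqref{eq:first-square}. Also \eqref{eq:amplified-eta} gives $\operatorname{dist}_d(\sigma)=d\tau_{n,q}<d\varepsilon$.

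\emph{Choice of $r$.} With $n$ now fixed, \eqref{eq:eta} at level $n$ gives
\[
\eta_{n,qr}\le \frac{1}{qr}\sum_{w\in V_n}p_n(w),
\]
which tends to $0$ as $r\to\infty$. Take $r$ large, and a power of $2$ if desired (powers of $2$ are unbounded, and $qr$ is then a power of $2$ whenever $q$ is), so that $\eta_{n,qr}<\min\{\varepsilon,\kappa\}$ and $r$ exceeds any prescribed bound. Then \eqref{eq:amplified-eta} gives $\operatorname{dist}_d(\rho')=d\eta_{n,qr}<d\min\{\varepsilon,\kappa\}$. Also \eqref{eq:parameter-second-square} gives $d(\eta_{n,qr}+\tau_{n,q})<2d\varepsilon$, which is \eqref{eq:second-square}.

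\emph{Main obstacle.} The only delicate point is the order of choices: $\eta_{n,qr}$ must be made small after $n$ is fixed, since its bound $\frac{1}{qr}\sum_w p_n(w)$ depends on $n$ through $\sum_w p_n(w)$. This is why $n$ is chosen first and $r$ second; the error $\tau_{n,q}$ depends only on $n$ and $q$, so choosing $r$ afterwards does not disturb it. Everything else is direct substitution into the previous lemma, and the strict inequalities come from the strict choices of $n$ and $r$.
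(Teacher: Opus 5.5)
Your proposal is correct and matches the paper's proof essentially step for step. You choose $n$ first via \Cref{lem:homogeneous,lem:tall} so that $\xi_{m,n},\tau_{n,q}<\varepsilon$, fix an alignment $U$ with $\sigma=\sigma_{m,n,q}^{U,P}$, then take $r$ (a power of $2$ if desired) with $\eta_{n,qr}\le \frac{1}{qr}\sum_{w}p_n(w)<\min\{\varepsilon,\kappa\}$, set $\rho'=\rho_{m,n,q,r}^{U,P}$, and read off every estimate from \Cref{lem:finite-model-estimates}, with the same order of choices ($n$ before $r$) that the paper uses.
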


\begin{proof}
By \Cref{lem:homogeneous,lem:tall}, choose $n$ with $m<n$ such that
\begin{align}
 \xi_{m,n}&<\varepsilon,
 \label{eq:choose-hom}\\
 \tau_{n,q}&<\varepsilon.
 \label{eq:choose-tall}
\end{align}
Both estimates hold for all \(n\) sufficiently large.
Thus, for any given  $N_0$, one can choose $n>\max\{m,N_0\}$.
Choose an $(m,n,q)$-alignment $U$, and put \(\sigma:=\sigma_{m,n,q}^{U,P}\).
For these fixed $n,q$, \eqref{eq:eta} gives the explicit bound
\[
 \eta_{n,qr}\leq\frac1{qr}\sum_{w\in V_n}p_n(w).
\]
Choose $r$ larger than any prescribed lower bound and such that
\[
 r>\frac{\sum_{w\in V_n}p_n(w)}{q\min\{\varepsilon,\kappa\}}.
\]
We may choose $r$ to be a power of $2$, since these are unbounded.
Then
\begin{equation}\label{eq:output-floor-error}
 \eta_{n,qr}<\min\{\varepsilon,\kappa\}.
\end{equation}

Set
\[
 \rho':=\rho_{m,n,q,r}^{U,P}.
\]

The choices of $n,U,r$ are independent of the matrix size $d$.
For each $d\geq1$, \Cref{lem:finite-model-estimates} applies as follows.
By \eqref{eq:input-floor-error}, \eqref{eq:choose-hom}, and
\eqref{eq:choose-tall},
\[
 \rk_{\alpha,n}\bigl(\phi_{n,m}(X)-\sigma\rho(X)\bigr)
 \leq d\bigl(\xi_{m,n}+\eta_{m,q}+\tau_{n,q}\bigr)
 <3d\varepsilon,
\]
which proves \eqref{eq:first-square}.
Likewise, \eqref{eq:choose-tall} and \eqref{eq:output-floor-error} give
\[
 \rk_{qr}\bigl(\jmath_{q,r}(Y)-\rho'\sigma(Y)\bigr)
 \leq d\bigl(\eta_{n,qr}+\tau_{n,q}\bigr)
 <2d\varepsilon,
\]
which proves \eqref{eq:second-square}.
Finally, \eqref{eq:amplified-eta} gives
\[
 \operatorname{dist}_d(\sigma)=d\tau_{n,q}<d\varepsilon,
 \qquad
 \operatorname{dist}_d(\rho')=d\eta_{n,qr}
 <d\min\{\varepsilon,\kappa\}.
\]
Each map is constructed by copying matrix blocks, inserting zero
blocks, and conjugating by permutation matrices.  These operations
are $R$-compatible.
\end{proof}

\subsection{Approximate intertwining}
The preceding lemma constructs maps between selected Bratteli levels
and full matrix algebras whose compositions approximate the connecting
maps.  These maps need not be exactly compatible with the direct systems,
so they do not directly induce maps on the algebraic direct limits.
The following proposition shows that summable composition errors and rank
distortions yield an exact rank-preserving isomorphism after completion.
This is a rank-metric version of the approximate intertwining argument.
A closely related Cauchy-limit construction in the continuous-factor setting
appears in \cite[Lemma~2.4]{AraClaramunt2018}.
We give the proof here for general coefficient rings, with explicit control
of the rank distortions at every matrix size.

\begin{proposition}\label{prop:summable}
Let $(S_i,\phi_i)$ and $(T_i,\psi_i)$ be direct systems of unital $R$-algebras
with Sylvester matrix rank functions and rank-preserving $R$-algebra
connecting maps $\phi_{i+1}\colon S_i\to S_{i+1}$ and
$\psi_{i+1}\colon T_i\to T_{i+1}$.  Suppose there are
$R$-compatible homomorphisms
$\rho_i\colon S_i\to T_i$ and $\sigma_i\colon T_i\to S_{i+1}$ and positive numbers
$\delta_i$ with $\sum_i\delta_i<\infty$ such that, for every $i,d\geq1$, $X\in\Mat_d(S_i)$, and
$Y\in\Mat_d(T_i)$,
\begin{align}
 \rk_{S_{i+1}}\bigl(\phi_{i+1}(X)-\sigma_i\rho_i(X)\bigr)&\leq d\delta_i,\label{eq:sum-square1}\\
 \rk_{T_{i+1}}\bigl(\psi_{i+1}(Y)-\rho_{i+1}\sigma_i(Y)\bigr)&\leq d\delta_i,\label{eq:sum-square2}
\end{align}
and
\[
 \operatorname{dist}_d(\rho_i),\operatorname{dist}_d(\sigma_i)\leq d\delta_i.
\]
Then the rank completions of $\varinjlim S_i$ and $\varinjlim T_i$ are
isomorphic as unital $R$-algebras, and the isomorphism preserves the Sylvester
matrix rank function.
\end{proposition}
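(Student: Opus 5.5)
The plan is to build the isomorphism as a limit of Cauchy sequences along the intertwining diagram. First pass to the faithful quotients $\bar S_i=S_i/\ker\rk_{S_i}$ and $\bar T_i$. Let $\overline S$ and $\overline T$ denote the completions of the direct limits, and write $\Phi_i\colon S_i\to\overline S$ and $\Psi_i\colon T_i\to\overline T$ for the canonical rank-preserving maps. For $x\in S_i$, define
\[
 \alpha_k(x):=\Psi_{k}\rho_{k}\sigma_{k-1}\rho_{k-1}\cdots\sigma_i\rho_i(x)\in\overline T,
\]
with the convention $\alpha_i(x)=\Psi_i\rho_i(x)$. For $k\geq i$, compare $\alpha_{k+1}(x)$ with $\alpha_k(x)$. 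The difference equals the image, under the rank-non-increasing composite $\Psi_{k+1}\rho_{k+1}$, of
\[
 \phi_{k+1}(z)-\sigma_k\rho_k(z),
\]
after rewriting $\Psi_k\rho_k(z)=\Psi_{k+1}\psi_{k+1}\rho_k(z)$ and inserting \eqref{eq:sum-square2}. Here $z$ is the partial composite. Concretely, I would write
\[
 \Psi_{k+1}\bigl(\psi_{k+1}(\rho_k z)-\rho_{k+1}\sigma_k(\rho_k z)\bigr),
\]
which has rank at most $d\delta_k$ for a $d\times d$ matrix, by \eqref{eq:sum-square2}. Homomorphisms do not increase rank beyond the distortion: $\rk(f(X))\leq\rk(X)+\operatorname{dist}_d(f)$. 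So the bound survives, and no distortions accumulate, because the error term is produced at the last step. The sequence $(\alpha_k(X))_k$ is therefore Cauchy in $\Mat_d(\overline T)$, uniformly in $X$, with tail bounded by $d\sum_{j\geq k}\delta_j$.

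Next, check that the limit $\alpha(x):=\lim_k\alpha_k(x)$ defines a map on $\varinjlim S_i$. Compatibility with $\phi_{i+1}$ holds because $\alpha_k(\phi_{i+1}(x))$ and $\alpha_k(x)$ differ by the image of $\phi_{i+1}(x)-\sigma_i\rho_i(x)$. By \eqref{eq:sum-square1}, this has rank at most $\delta_i$ after applying the later maps. Since the maps in the chain are fixed, I would instead verify that $\alpha(x)=\alpha(\phi_{i+1}(x))$ up to an error of $\delta_i+\sum_{j>i}$ terms, for every starting index. Passing to larger indices, by first replacing $x$ with $\phi_{l,i}(x)$, drives the error to zero. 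The map $\alpha$ is additive and multiplicative, since each $\alpha_k$ is a homomorphism and the ring operations are continuous. It is $R$-compatible, since each factor is. It is rank-preserving: $\rk\alpha_k(X)$ differs from $\rk X$ by at most $d\sum_{j\geq i}2\delta_j$. Applying this to $\phi_{l,i}(X)$ with $l\to\infty$ gives exact equality. Unitality then follows from rank preservation and \eqref{eq:identity-controlled-by-distortion}. The element $1-\alpha(1)$ is an idempotent of rank $1-\rk(\alpha(1))=0$, and it vanishes because the completion is faithful. Hence $\alpha$ is isometric, extends to $\overline S$, and gives a unital rank-preserving $R$-algebra homomorphism $\overline\alpha\colon\overline S\to\overline T$.

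Symmetrically, build $\beta\colon\overline T\to\overline S$ from the chains $\sigma_k\rho_k\cdots\rho_{i+1}\sigma_i$, using \eqref{eq:sum-square1} for the Cauchy estimate. To prove $\beta\alpha=\id$, evaluate on $\Phi_i(x)$. The composite $\beta\alpha_k$ is approximated by $\Phi_{k+1}\sigma_k\rho_k\cdots\sigma_i\rho_i(x)$. Repeated use of \eqref{eq:sum-square1} shows that this is within $\sum_{j\geq i}\delta_j$ of $\Phi_{k+1}\phi_{k+1,i}(x)=\Phi_i(x)$. Passing to cofinal indices again makes the error vanish, and density finishes the argument; the identity $\alpha\beta=\id$ is analogous. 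The main obstacle is bookkeeping rather than depth. Applying a nonrank-preserving map to a small-rank difference must not amplify it. For this I would use the bound $\rk f(X)\leq\rk X$, which holds for every ring homomorphism $f$ landing in a faithful rank ring if $f$ is given by $X\mapsto$ a composite of copying, zero insertion and conjugation. More safely, it follows from $\rk f(X)\leq\rk X+\operatorname{dist}_d(f)$, with the total of these additive errors being summable.
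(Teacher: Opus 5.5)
\textbf{Gap: the stage maps are never glued into a map on the direct limit.} For $x\in S_i$, write $\alpha^{(i)}(x)=\lim_k\alpha_k(x)$ for the limit of your zig-zag composites starting at $i$. Your Cauchy estimate for these chains is correct, and it needs no distortion term. With $w=\rho_k z$, the difference $\alpha_{k+1}(x)-\alpha_k(x)$ is exactly $\Psi_{k+1}\bigl(\rho_{k+1}\sigma_k(w)-\psi_{k+1}(w)\bigr)$. Your first description of it, as the image of $\phi_{k+1}(z)-\sigma_k\rho_k(z)$ under a ``rank-non-increasing'' $\Psi_{k+1}\rho_{k+1}$, is wrong, but the concrete formula is right.

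As you notice, $\alpha^{(i)}$ and $\alpha^{(i+1)}\phi_{i+1}$ differ by up to $\delta_i+2\sum_{j>i}\delta_j$, so they do not glue. You must therefore \emph{define} $\alpha(\Phi_i(x)):=\lim_l\alpha^{(l)}(\phi_{l,i}(x))$ and prove that this limit exists. ``Passing to larger indices drives the error to zero'' does not do this. The discrepancies between consecutive starting indices are of size $\delta_l+2\sum_{j>l}\delta_j$; they tend to zero but need not be summable (take $\delta_j=j^{-2}$), so they do not give a Cauchy sequence.

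The same bookkeeping problem sits in your proof of $\beta\alpha=\id$. Under the hypotheses, the maps $\sigma_j\rho_j$ are not rank-non-increasing: only $\operatorname{dist}_d\le d\delta_j$ is assumed, and the copying and zero-insertion structure is not part of the proposition. So repeated use of \eqref{eq:sum-square1} does not give the bound $\sum_{j\ge i}\delta_j$. If each error is pushed through the rest of the chain separately, you get the possibly divergent $\sum_j\sum_{k>j}\delta_k$.

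\textbf{How to close it.} The missing idea is to aggregate the errors, so that each distortion is paid once per map rather than once per error. For $y\in S_l$ and $\gamma_j=\sigma_j\rho_j$, put $e_K=\rk\bigl(\gamma_K\cdots\gamma_l(y)-\phi_{K+1,l}(y)\bigr)$. The identity $\gamma_K(a)-\phi_{K+1}(b)=\gamma_K(a-b)+(\gamma_K-\phi_{K+1})(b)$ gives $e_K\le e_{K-1}+\operatorname{dist}(\gamma_K)+\delta_K\le e_{K-1}+3\delta_K$. Hence $e_K\le 3\sum_{j\ge l}\delta_j$, with a factor $d$ for $d\times d$ matrices. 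Pushing this single aggregated difference through the remaining chain shows that $\alpha^{(l)}(\phi_{l,i}x)$ and $\alpha^{(l')}(\phi_{l',i}x)$ lie within $3\sum_{j\ge l}\delta_j$ of each other for $l<l'$. So the glued map exists. The same estimate proves $\beta\alpha=\id$ once the starting index is sent to infinity. With $\alpha$ correctly defined, the homomorphism property, $R$-compatibility, rank preservation and unitality go through as you describe.

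\textbf{Comparison with the paper.} Your own distortion-free estimate shows that $\alpha^{(l)}(y)$ is within $\sum_{j\ge l}\delta_j$ of $\Psi_l\rho_l(y)$. So the glued map is the limit of $\rho_l\phi_{l,i}(x)$, which is exactly the paper's sequence $u_l(x)$. The paper starts from that sequence because it is compatible on the nose: $F_{i+1}\phi_{i+1}=F_i$ follows from $\phi_{j,i+1}\phi_{i+1}=\phi_{j,i}$, so no second limit is needed. The price is one distortion term in its Cauchy bound $2\delta_j+\delta_{j+1}$. In return, its inverse check $GF=\id$ involves only the single jump $\sigma_j\rho_j(z_j)\approx\phi_{j+1}(z_j)$ rather than an accumulated chain.
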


The composition estimates \eqref{eq:sum-square1} and
\eqref{eq:sum-square2} correspond to the triangles in
\[
\begin{tikzpicture}[>=Stealth,baseline=(current bounding box.center)]
  \node (s0) at (0,1.6) {$S_i$};
  \node (s1) at (3.3,1.6) {$S_{i+1}$};
  \node (s2) at (6.6,1.6) {$S_{i+2}$};
  \node (t0) at (0,0) {$T_i$};
  \node (t1) at (3.3,0) {$T_{i+1}$};
  \node (t2) at (6.6,0) {$T_{i+2}$};
  \draw[->] (s0) -- node[above] {$\phi_{i+1}$} (s1);
  \draw[->] (s1) -- node[above] {$\phi_{i+2}$} (s2);
  \draw[->] (t0) -- node[below] {$\psi_{i+1}$} (t1);
  \draw[->] (t1) -- node[below] {$\psi_{i+2}$} (t2);
  \draw[->] (s0) -- node[left] {$\rho_i$} (t0);
  \draw[->] (s1) -- node[right] {$\rho_{i+1}$} (t1);
  \draw[->] (s2) -- node[right] {$\rho_{i+2}$} (t2);
  \draw[->] (t0) -- node[above,sloped] {$\sigma_i$} (s1);
  \draw[->] (t1) -- node[above,sloped] {$\sigma_{i+1}$} (s2);
\end{tikzpicture}
\]
Each triangle commutes up to the rank error $d\delta_i$
(or $d\delta_{i+1}$) on $d\times d$ matrices.

\begin{proof}[Proof of \Cref{prop:summable}]
Write $S_\infty=\varinjlim S_i$ and $T_\infty=\varinjlim T_i$, and denote
their rank completions by $\overline{S_\infty}$ and $\overline{T_\infty}$.

We compare elements from different stages in the respective rank completions.
For $x\in S_i$ and $j\geq i$, set
\[
  u_j(x)=\rho_j(\phi_{j,i}(x))\in T_j.
\]
Writing $z=\phi_{j,i}(x)$ and comparing consecutive terms in $T_{j+1}$ gives
\begin{align}
 &\rk_{T_{j+1}}\bigl(\psi_{j+1}u_j(x)-u_{j+1}(x)\bigr)\notag\\*
 &\quad \leq
 \rk_{T_{j+1}}\bigl(\psi_{j+1}\rho_j(z)-\rho_{j+1}\sigma_j\rho_j(z)\bigr) +
 \rk_{T_{j+1}}\bigl(\rho_{j+1}(\sigma_j\rho_j(z)-\phi_{j+1}(z))\bigr)\notag\\
 &\quad \leq \delta_j+(\delta_j+\delta_{j+1})\notag\\
 &\quad =2\delta_j+\delta_{j+1}.
 \label{eq:F-consecutive}
\end{align}
The first term is at most $\delta_j$ by \eqref{eq:sum-square2}.
The second is at most
$\delta_j+\operatorname{dist}(\rho_{j+1})
\leq\delta_j+\delta_{j+1}$
by \eqref{eq:sum-square1} and the definition of rank distortion.

For $l>j\geq i$, the triangle inequality gives
\[
 d_{\overline{T_\infty}}(u_l(x),u_j(x))
 \leq\sum_{k=j}^{l-1}(2\delta_k+\delta_{k+1}).
\]
Since $(\delta_k)$ is summable, $(u_j(x))_j$ is Cauchy.  Set
$F_i(x)=\lim_{j\to\infty}u_j(x)$.
Each $\rho_j\phi_{j,i}$ is a ring homomorphism, so continuity of addition
and multiplication implies that $F_i$ is a ring homomorphism.
The identity $\phi_{j,i+1}\phi_{i+1}=\phi_{j,i}$ gives
$F_{i+1}\phi_{i+1}=F_i$.
Thus the maps $F_i$ induce a ring homomorphism
$F\colon S_\infty\to\overline{T_\infty}$.

The connecting maps preserve rank, and hence
\[
 \left|\rk_{T_j}(u_j(x))-\rk_{S_i}(x)\right|
 \leq\operatorname{dist}(\rho_j)\leq\delta_j.
\]
Since $\delta_j\to0$, rank continuity gives $\rk(F(x))=\rk(x)$.
Additivity now implies that $F$ induces an isometry on the rank-zero
quotient, and therefore extends uniquely to an isometric ring homomorphism
$F\colon\overline{S_\infty}\to\overline{T_\infty}$.
Moreover, since $u_j(1)$ is idempotent,
equation~\eqref{eq:idempotent-complement-rank} gives
\[
 \rk(1-u_j(1))=1-\rk(u_j(1))
 \leq\operatorname{dist}(\rho_j)\leq\delta_j.
\]
Passing to the limit yields $F(1)=1$.
For $a,b\in R$ and $x\in S_i$, $R$-compatibility gives
\[
 u_j(\iota_{S_i}(a)x\iota_{S_i}(b))
 =\iota_{T_j}(a)u_j(x)\iota_{T_j}(b).
\]
The coefficient elements have fixed images in the completion.
Taking limits, and then using density, proves $R$-compatibility of $F$.
Together with unitality, this makes $F$ an $R$-algebra homomorphism.

For $X\in\Mat_d(S_i)$, applying the same calculation at matrix
size $d$, using the assumed composition bounds and
$\operatorname{dist}_d(\rho_{j+1})\leq d\delta_{j+1}$,
gives successive errors bounded by $d(2\delta_j+\delta_{j+1})$.
By \eqref{eq:rank-continuity-bounds}, its limit in the matrix rank metric
is the entrywise amplification $F(X)$.
Rank continuity and the distortion bound give
\[
 \bigl|\rk(F(X))-\rk_{S_i}(X)\bigr|
 =\lim_{j\to\infty}
 \bigl|\rk_{T_j}(\rho_j\phi_{j,i}(X))-\rk_{S_i}(X)\bigr|
 \leq\lim_{j\to\infty}d\delta_j=0.
\]
Density extends this identity to matrices over $\overline{S_\infty}$.
For rectangular matrices, apply square rank preservation to
$\begin{pmatrix}0&X\\0&0\end{pmatrix}$, whose rank equals that of $X$
by permutation invariance and block additivity.
Thus $F$ preserves the Sylvester matrix rank function.

In the reverse direction, for $y\in T_i$ and $j\geq i$, set
\[
 v_j(y)=\sigma_j(\psi_{j,i}(y))\in S_{j+1}.
\]
The same estimates, with the index shift in the target of $\sigma_j$, give
\begin{equation}\label{eq:G-consecutive}
 \rk_{S_{j+2}}\bigl(\phi_{j+2}v_j(y)-v_{j+1}(y)\bigr)
 \leq\delta_j+2\delta_{j+1}.
\end{equation}
The preceding argument therefore defines a unital $R$-algebra homomorphism
$G\colon\overline{T_\infty}\to\overline{S_\infty}$ by
$G(y)=\lim_{j\to\infty}v_j(y)$, preserving all matrix ranks.

It remains to prove that $F$ and $G$ are inverse to each other.
For $x\in S_i$, put $z_j=\phi_{j,i}(x)$.
Summing \eqref{eq:G-consecutive} for the element $\rho_j(z_j)\in T_j$
and using \eqref{eq:sum-square1} gives
\[
 \begin{aligned}
 d\bigl(G(\rho_j(z_j)),x\bigr)
 &\leq d\bigl(G(\rho_j(z_j)),\sigma_j\rho_j(z_j)\bigr)
       +d\bigl(\sigma_j\rho_j(z_j),\phi_{j+1}(z_j)\bigr)\\
 &\leq\sum_{k=j}^{\infty}(\delta_k+2\delta_{k+1})+\delta_j
 \longrightarrow0.
 \end{aligned}
\]
Here $\phi_{j+1}(z_j)$ represents $x$ in the direct limit.
Since $\rho_j(z_j)\to F(x)$ and $G$ is continuous, $GF(x)=x$.
Similarly, \eqref{eq:F-consecutive} and \eqref{eq:sum-square2}
give $FG(y)=y$ for $y\in T_\infty$.
Density and continuity extend both identities to the completions,
proving the required rank-preserving unital $R$-algebra isomorphism.
\end{proof}

\subsection{Proof of the main theorem}

\begin{proof}[Proof of \Cref{thm:main}]
Choose positive numbers $\varepsilon_i$ with $\sum_i\varepsilon_i<\infty$;
for example, $\varepsilon_i=2^{-i}$.
We recursively select levels
\[
  n_1<n_2<\cdots,
\]
matrix sizes $q_i=2^{k_i}$ with $k_i\to\infty$, and homomorphisms
\[
  \rho_i\colon A_{n_i}\longrightarrow \Mat_{q_i}(R),
  \qquad
  \sigma_i\colon \Mat_{q_i}(R)\longrightarrow A_{n_{i+1}}.
\]
Each $\rho_i$ will have the form $\rho_{n_i,q_i}^{P_i}$ for a permutation
matrix $P_i\in\Mat_{q_i}(R)$, and the
induction also maintains
\begin{equation}\label{eq:recursive-floor-error}
 \eta_{n_i,q_i}<\varepsilon_i.
\end{equation}

Set $n_1=1$.  Since $V_1$ is finite, the number $\sum_{v\in V_1}p_1(v)$
is finite.  Choose
$q_1=2^{k_1}>\varepsilon_1^{-1}\sum_{v\in V_1}p_1(v)$.
Set $\rho_1=\rho_{n_1,q_1}$.  By \eqref{eq:eta},
$\eta_{n_1,q_1}<\varepsilon_1$, proving
\eqref{eq:recursive-floor-error} for $i=1$.
Given $n_i,q_i,\rho_i$ satisfying \eqref{eq:recursive-floor-error}, apply
\Cref{lem:finite-approximation} with $m=n_i$, $\varepsilon=\varepsilon_i$, and
$\kappa=\varepsilon_{i+1}$, requiring $n>i+1$ and choosing the resulting
$r=r_i$ to be a power of $2$ large enough that $q_ir_i=2^{k_{i+1}}$ with
$k_{i+1}>\max\{k_i,i+1\}$.
Set $q_{i+1}=q_ir_i$, $n_{i+1}=n$, $\sigma_i=\sigma$, and
$\rho_{i+1}=\rho'$.
Taking $d=1$ in \eqref{eq:finite-approximation-distortions} proves
\eqref{eq:recursive-floor-error} at stage
$i+1$, and $\rho_{i+1}$ has the required permutation form at
$(n_{i+1},q_{i+1})$.  Thus \Cref{lem:finite-approximation} can be applied at
the next stage.
The sequences $(n_i)$ and $(k_i)$ are strictly increasing and unbounded.
Thus the selected stages are cofinal in both systems, and
\Cref{prop:cofinal-direct-limit} applies.

The initial floor construction gives
$\operatorname{dist}_d(\rho_1)<d\varepsilon_1$.
At the recursive step, \eqref{eq:finite-approximation-distortions} and the choice
$\kappa=\varepsilon_{i+1}$ give
\[
 \operatorname{dist}_d(\sigma_i)<d\varepsilon_i,
 \qquad
 \operatorname{dist}_d(\rho_{i+1})<d\varepsilon_{i+1},
\]
and \eqref{eq:first-square} and \eqref{eq:second-square} give composition errors
bounded by
$3d\varepsilon_i$ and $2d\varepsilon_i$, respectively.
Apply \Cref{prop:summable} with
\[
 S_i=A_{n_i},\quad T_i=\Mat_{q_i}(R),\quad
 \phi_{i+1}=\phi_{n_{i+1},n_i},\quad
 \psi_{i+1}=\jmath_{q_i,r_i},
\]
and $\delta_i=3\varepsilon_i$.
The $S$-connecting maps preserve rank by \Cref{prop:rank-compatible};
the $T$-connecting maps preserve rank by
\Cref{subsec:factor-sequence-models}.
Both are unital $R$-algebra maps.
The maps $\rho_i,\sigma_i$ are $R$-compatible by \Cref{lem:finite-approximation}.
Their distortions are less than $d\varepsilon_i\leq d\delta_i$, and the
two composition errors are less than $3d\varepsilon_i=d\delta_i$ and
$2d\varepsilon_i\leq d\delta_i$.  Finally,
$\sum_i\delta_i=3\sum_i\varepsilon_i<\infty$.
All hypotheses of \Cref{prop:summable} are therefore satisfied.
That proposition produces a unital $R$-algebra isomorphism between the two rank
completions, preserving the Sylvester matrix rank function on every rectangular
matrix space.
Cofinality identifies the two completions with $\overline A_\alpha(B,R)$ and
$\mathcal M_{R,\rk}$, respectively.
\end{proof}

\section{Consequences and coefficient ranks}
\label{sec:consequences}

Throughout this section, fix a unital ring $R$ with a Sylvester
matrix rank $\rk$, and put $Q=\mathcal M_{R,\rk}$.

\subsection{Consequences of the main theorem}
\label{subsec:classical-comparison}

\begin{corollary}[Factor sequences]\label{cor:factor-sequence-uniqueness}
For every factor sequence $\mu$, there is a unital $R$-algebra isomorphism
\begin{equation}\label{eq:factor-sequence-completion}
 \overline{R_\mu}\cong\mathcal M_{R,\rk}
\end{equation}
preserving the specified ranks on all rectangular matrices.
Here $\overline{R_\mu}$ denotes the rank completion with respect to $\rk_\mu$.
\end{corollary}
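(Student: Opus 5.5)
The plan is to realize $\overline{R_\mu}$ as a Bratteli completion and then invoke \Cref{thm:main}. By \Cref{subsec:factor-sequence-models}, the one-vertex diagram $B_\mu$ has a single vertex $v_i$ at level $i$ and $d_i=n_i/n_{i-1}$ edges from $v_{i-1}$ to $v_i$. Its path counts are $p_i(v_i)=n_i$, and its level algebras are $A_i(B_\mu,R)=\Mat_{n_i}(R)$. The Bratteli connecting maps are exactly $\jmath_{n_{i-1},d_i}$. The unique harmonic function is $\alpha_i(v_i)=1$, and its weighted ranks are the normalized ranks $\rk_{n_i}$. Hence there is a rank-preserving $R$-algebra identification
\[
 (A(B_\mu,R),\rk_\alpha)\cong(R_\mu,\rk_\mu).
\]
This identification respects the structure maps $\iota$, since both send $r\mapsto1_{n_i}\otimes r$. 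By the extension property for rank-preserving homomorphisms in \Cref{sec:rank-completions}, it induces a rank-preserving unital $R$-algebra isomorphism $\overline A_\alpha(B_\mu,R)\cong\overline{R_\mu}$.

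Next I will check the two hypotheses of \Cref{thm:main}. For extremality, $\mathcal H(B_\mu)$ is a single point, because normalization forces $\alpha_i(v_i)=1$. A one-point convex set has its unique element as an extreme point. For $\alpha$-aperiodicity, $n_i\mid n_{i+1}$ makes $(n_i)$ nondecreasing, and together with unboundedness this gives $n_i\to\infty$. So for each $L$, the sum in \eqref{eq:alpha-aperiodic} is empty once $n_i\geq L$. Both facts are already recorded at the end of \Cref{subsec:factor-sequence-models}.

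\Cref{thm:main} then gives a unital $R$-algebra isomorphism $\overline A_\alpha(B_\mu,R)\cong\mathcal M_{R,\rk}$ that preserves ranks on every rectangular matrix space. Composing with the isomorphism from the first step yields \eqref{eq:factor-sequence-completion}.

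The only point needing care is the bookkeeping at the initial stage. $R_\mu$ was defined starting from $\Mat_{n_0}(R)=R$ with $n_0=1$, whereas ranks on the Bratteli side are defined for levels $n\geq1$ with $A_0=R$ carrying $\rk$. Both systems agree level by level, including level $0$, by \Cref{prop:rank-compatible}. If needed, \Cref{prop:cofinal-direct-limit} removes the initial stage without changing the completion. I do not expect a genuine obstacle here: the statement is a direct specialization of the main theorem.
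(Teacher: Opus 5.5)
Your proposal is correct and matches the paper's proof. Both identify $(R_\mu,\rk_\mu)$ with the ranked Bratteli algebra of the one-vertex diagram $B_\mu$, observe that its unique harmonic function is extreme and $\alpha$-aperiodic because $n_i\to\infty$, and then apply \Cref{thm:main}; your extra checks on the structure maps and the initial stage are accurate but already covered by \Cref{subsec:factor-sequence-models}.
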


\begin{proof}
By \Cref{subsec:factor-sequence-models}, $R_\mu$ is the ranked algebra
associated with $B_\mu$.  Its unique harmonic function $\alpha$ is
extreme, and $(B_\mu,\alpha)$ is $\alpha$-aperiodic.
The conclusion follows from \Cref{thm:main}.
\end{proof}

In particular, the isomorphism preserves the natural left and right
$R$-module structures considered in \cite{Halperin1968}.
More generally, \Cref{prop:simple-positive,thm:main} give
\[
 B\text{ simple and aperiodic},\quad
 \alpha\in\operatorname{ext}\mathcal H(B)
 \quad\Longrightarrow\quad
 \overline A_\alpha(B,R)\cong\mathcal M_{R,\rk}.
\]
Here simplicity gives strictly positive harmonic weights, and
aperiodicity implies $\alpha$-aperiodicity.

\begin{example}\label{ex:two-branch-rank-completions}
For the two-branch diagram in \Cref{ex:positive-not-compact},
\[
 A_n(B,R)=\Mat_{2^{n-1}}(R)\times\Mat_{2^{n-1}}(R),
\]
and the connecting maps repeat blocks independently on the two factors.
Thus, for $0<t<1$,
\begin{equation}\label{eq:two-branch-completion}
 \overline A_{\alpha^t}(B,R)\cong Q\times Q,\qquad
 \rk_{\alpha^t}(X,Y)=t\rk(X)+(1-t)\rk(Y).
\end{equation}
Indeed, a sequence is Cauchy for this weighted rank metric if and only
if both component sequences are Cauchy; the rank-zero quotient and
completion therefore give the product of two copies of $Q$.
At $t=0$ or $t=1$, the zero-weight branch is removed by
\Cref{prop:support-reduction}, giving
\[
 \overline A_{\alpha^0}(B,R)\cong Q
 \cong\overline A_{\alpha^1}(B,R).
\]

\end{example}

\begin{example}\label{ex:pascal-rank-completions}
For the Pascal diagram in \Cref{ex:pascal-harmonic}, \Cref{thm:main} gives
\[
 \overline A_{\alpha^t}(B,R)\cong Q\qquad(0<t<1).
\]
At $t=0$ and $t=1$, the support subdiagram has one vertex and one edge
at each level.  Its algebraic limit is $R$, so support reduction gives
\[
 \overline A_{\alpha^0}(B,R)
 \cong\overline{R_{\rk}}\cong\overline A_{\alpha^1}(B,R).
\]
\end{example}

\subsection{Coefficient rings and ranks}
\label{subsec:coefficient-ranks}

Following \cite[Section~2]{AraClaramunt2018}, a \emph{continuous factor}
is a simple, regular, left and right self-injective ring of type
$\mathrm{II}_f$.  Such a ring has a unique rank function, is complete
in its rank metric, and its scalar rank takes every value in $[0,1]$.

For a division ring $D$, \Cref{thm:main} is a consequence of
Ara and Claramunt \cite[Theorem~3.2]{AraClaramunt2018}.
Scalar pseudo-rank functions $N$ on $A(B,D)$ are affinely parametrized
by $\alpha\in\mathcal H(B)$ through
\[
 \alpha_n(v)=N(e_{n,v}),\qquad
 N((x_v)_v)=\sum_{v\in V_n}\frac{\alpha_n(v)}{p_n(v)}
                   \operatorname{rank}_D(x_v).
\]
Indeed, invertible row and column operations and orthogonal additivity
determine $N$ from its values on the block units, and compatibility
between stages is precisely the harmonic equation.
Conversely, the weighted Sylvester matrix rank restricts to this
pseudo-rank by \Cref{rem:pseudo-rank-dimension}.
Thus extreme harmonic functions correspond to extremal pseudo-ranks.
If $(B,\alpha)$ is $\alpha$-aperiodic, positive-weight blocks have
unbounded sizes; their minimal idempotents have ranks
$\alpha_n(v)/p_n(v)>0$ tending to zero along a suitable sequence.
Hence the corresponding pseudo-rank is non-discrete.

For extreme $\alpha$, the completion is regular and simple
\cite[Theorems~19.6 and 19.14]{Goodearl1991}, and non-discreteness
makes it a continuous factor \cite[Lemma~3(i)]{Halperin1968}.
Ara and Claramunt's theorem therefore gives the required $D$-ring
isomorphism.  It preserves the unique scalar rank and hence all matrix
ranks by \Cref{rem:pseudo-rank-dimension}.

Every division ring contains its center as a field.  More generally,
the field case extends to any coefficient ring containing a central
field, with no regularity assumption on the coefficient ring.
The following proof establishes this extension without using
\Cref{sec:uniqueness}.

\begin{proposition}
\label{prop:central-field-transfer}
Suppose that $K\subseteq Z(R)$ is a field with the same identity as $R$.
Let $\alpha$ be an extreme harmonic function on a Bratteli diagram $B$
such that $(B,\alpha)$ is $\alpha$-aperiodic.  Then there is a unital
$R$-algebra isomorphism
\[
 \overline A_\alpha(B,R)\cong\mathcal M_{R,\rk}
\]
preserving the specified ranks on all rectangular matrices.
\end{proposition}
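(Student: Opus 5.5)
The plan is to reduce to the field case of Ara and Claramunt and then transport the resulting approximate intertwining to $R$ by tensoring over $K$. The machinery of \Cref{sec:uniqueness} is not used, except for the purely formal \Cref{prop:summable}.

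\textbf{Field case.} Equip $K$ with its unique Sylvester matrix rank $\operatorname{rank}_K$. By the discussion preceding the proposition, extremality and $\alpha$-aperiodicity make the scalar pseudo-rank on $A(B,K)$ extremal and non-discrete. So \cite[Theorem~3.2]{AraClaramunt2018} gives a rank-preserving $K$-algebra isomorphism $F\colon\overline A_\alpha(B,K)\to\mathcal M_K$.

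\textbf{Finite-stage models over $K$.} From $F$ I extract finite-stage data of the type used in \Cref{prop:summable}: levels $n_1<n_2<\cdots$, sizes $q_i=2^{k_i}$, and $K$-algebra homomorphisms $\rho_i\colon A_{n_i}(B,K)\to\Mat_{q_i}(K)$ and $\sigma_i\colon\Mat_{q_i}(K)\to A_{n_{i+1}}(B,K)$, possibly nonunital, whose composition errors and distortions are bounded by a summable $\delta_i$. For $\rho_i$, I take the images under $F$ of the matrix units of the finite-dimensional algebra $A_{n_i}(B,K)$. These are exact matrix units in the regular complete ring $\mathcal M_K$. I approximate them in rank by an exact system of matrix units lying in some stage $\Mat_{2^k}(K)$. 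This is the standard perturbation of idempotents and matrix units in rank-complete regular rings; compare \cite[Lemma~2.4]{AraClaramunt2018}. The map $\sigma_i$ is built symmetrically from $F^{-1}$ applied to the matrix units of $\Mat_{q_i}(K)$. Choosing the approximation errors to be summable gives \eqref{eq:sum-square1}, \eqref{eq:sum-square2} and the distortion bounds for $d=1$ over $K$.

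\textbf{Tensoring with $R$.} Every homomorphism between matricial $K$-algebras is, after conjugation by an invertible matrix over $K$, a block-copying map with zero blocks inserted. Since $K\subseteq Z(R)$, the tensored map $f\otimes_K R$ is again such a map, up to conjugation by an invertible scalar matrix commuting with $\iota(R)$. It is therefore $R$-compatible. We have $A_n(B,R)=A_n(B,K)\otimes_K R$ and $\Mat_q(R)=\Mat_q(K)\otimes_K R$.

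By block additivity and invariance under invertible matrices, the $\rk$-distortion of $f\otimes_K R$ on $d\times d$ matrices is $d$ times the multiplicity defect of $f$. That defect equals the distortion of $f$ computed at the identity, which is the same number over $K$ and over $R$. The composition errors are differences of two block-copying maps. Once the two maps are put in a common conjugated form, only a controlled number of unmatched copies remains, and that count is independent of the coefficient ring. The correspondence of Sylvester ranks on $R$ and $\Mat_q(R)$ \cite[Proposition~1.4]{JaikinLopez2020} ensures the normalized ranks on $\Mat_q(R)$ behave as required. So the $K$-estimates become the $R$-estimates with the factor $d$. \Cref{prop:summable} then yields $\overline A_\alpha(B,R)\cong\mathcal M_{R,\rk}$, and \Cref{prop:cofinal-direct-limit} handles the cofinal subsequences.

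\textbf{Main obstacle.} The hard step is the second one: turning the abstract isomorphism $F$ into exact finite-stage homomorphisms with summable errors in both directions. The difficulty is that this control must be expressed as multiplicities and conjugations by invertible scalar matrices, so that it survives tensoring with $R$. Approximation in the rank metric over $K$ alone is not enough. I would first show that any rank-close pair of matrix-unit systems in $\Mat_{2^k}(K)$ can be conjugated to agree on a large common corner. This turns the rank error into a count of unmatched blocks, which is the form the transfer step needs.
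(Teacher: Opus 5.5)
Your route is viable but differs from the paper's. The paper never extracts finite-stage maps from the Ara--Claramunt isomorphism, and it does not use \Cref{prop:summable}.

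\textbf{The paper's route.} With $C=A(B,K)$, it embeds $\overline C_\alpha$ isometrically in $\overline A_\alpha(B,R)$ and transports $\psi\colon\mathcal M_K\cong\overline C_\alpha$. It then uses the dyadic matrix units $j\psi(e_{ij})$, which commute with $\iota_R(R)$, to define exact unital homomorphisms $\theta_q\colon\Mat_q(R)\to\overline A_\alpha(B,R)$ compatible with $x\mapsto\diag(x,x)$. Rank preservation is automatic from \cite[Proposition~1.4]{JaikinLopez2020}: the pullback of the rank along $\theta_q$ restricts to $\rk$ on the diagonal copy of $R$, hence equals $\rk_q$. Surjectivity follows from density, using $A(B,R)=R\otimes_K C$. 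So the paper needs only a dense isometric embedding, with no perturbation lemma and no error bookkeeping.

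\textbf{Your route.} Your approximate intertwining avoids that correspondence entirely. Your appeal to it in the transfer step is unnecessary, since $\rk_q$ is defined by flattening. The price is a matrix-unit stability lemma and error constants that depend on the number of matrix units at each stage. These constants are harmless, because each precision can be chosen after the relevant stage sizes are fixed.

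\textbf{The step you call the main obstacle is elementary.}
\begin{itemize}
\item \emph{Stability.} Work blockwise. Suppose $x_{ab}$ approximate the matrix units. Let $W_0$ be the common kernel of the defects $x_{ab}x_{cd}-\delta_{bc}x_{ad}$; its codimension is at most the total rank of the defects. Put $Z=\{z\in W_0: x_{ab}z\in W_0\text{ for all }a,b\}$. Then $x_{cd}Z\subseteq Z$. The restrictions to $Z$, extended by zero on a complement, are exact matrix units, each differing from $x_{ab}$ in rank by at most $\operatorname{codim}Z$. With $n$ matrix units, $\operatorname{codim}Z\leq(1+n)\operatorname{codim}W_0$.
\item \emph{Transfer.} Your claim that rank approximation over $K$ is not enough is mistaken. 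The rank $\rk$ restricts to $\operatorname{rank}_K$ on matrices over $K$. Every $X\in\Mat_d(A_n(B,R))$ has the form $\sum_{v,a,b}(1_d\otimes e^v_{ab})\,\iota(Y^v_{ab})$ with $Y^v_{ab}\in\Mat_d(R)$. Hence for $K$-homomorphisms $f,f'$, the rank of $(f\otimes_KR)(X)-(f'\otimes_KR)(X)$ is at most $d\sum_{v,a,b}\rk\bigl((f-f')(e^v_{ab})\bigr)$, which is a weighted rank over $K$. No common conjugated form is needed.
\item \emph{Distortion.} The multiplicity defect of a block-copying map equals its distortion at the identity only when all multiplicities undershoot. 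In general it is the supremum over sums of block units. That supremum is still the same over $K$ and over $R$, so your conclusion stands.
\end{itemize}
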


\begin{proof}
Put
\[
 C=A(B,K),\qquad
 L_K=\varinjlim_k\Mat_{2^k}(K).
\]
By \eqref{eq:bratteli-connecting-map},
$A(B,R)=R\otimes_K C$ canonically.
Since $K$ is a field, $\rk$ restricts to its unique Sylvester matrix rank.
Thus $C\to \overline A_\alpha(B,R)$ preserves the weighted matrix ranks and extends to an
embedding $j:\overline C_\alpha\to \overline A_\alpha(B,R)$.
Its image commutes with $\iota_R(R)$ by centrality of $K$ and continuity.
The preceding discussion shows that $\overline C_\alpha$ is a continuous
factor and that the pseudo-rank on $C$ is extremal.
The image of $C$ in $\overline C_\alpha$ is a rank-dense
$K$-subalgebra of at most countable dimension.  Moreover, $C$ is
ultramatricial and $\overline C_\alpha$ is its completion with respect
to the extremal pseudo-rank.  Thus condition~(ii) of
\cite[Theorem~2.2]{AraClaramunt2018} gives a
rank-preserving $K$-algebra isomorphism
\[
 \psi:\mathcal M_K\longrightarrow\overline C_\alpha.
\]
For $q=2^k$, let $(e_{ij})$ be the matrix units of the corresponding
stage of $L_K$.  The formula
\[
 \theta_q:\Mat_q(R)\longrightarrow \overline A_\alpha(B,R),\qquad
 (r_{ij})\longmapsto\sum_{i,j}\iota_R(r_{ij})j\psi(e_{ij})
\]
defines a unital homomorphism, since these matrix units commute with
$\iota_R(R)$.  Moreover, $\theta_q(r1_q)=\iota_R(r)$.
The pullback of the matrix rank of $\overline A_\alpha(B,R)$ therefore restricts along the
diagonal embedding of $R$ to the specified rank, on all rectangular
matrices.  The correspondence with ranks on $\Mat_q(R)$
\cite[Proposition~1.4]{JaikinLopez2020} identifies this pullback with
$\rk_q$.  Hence
\[
 \rk(\theta_q(X))=\rk_q(X)
 \qquad\bigl(X\in\Mat_{a\times b}(\Mat_q(R))\bigr).
\]
For $x\in\Mat_q(R)$, we have
$\theta_{2q}(\diag(x,x))=\theta_q(x)$.
Thus the maps $\theta_q$ induce a rank-preserving map
$\varinjlim_k\Mat_{2^k}(R)\to \overline A_\alpha(B,R)$.

To prove density of the image, write an element $x\in A(B,R)$ as
$x=\sum_{i=1}^h\iota_R(r_i)b_i$, with $b_i$ the images of elements of $C$.
Since $\psi(L_K)$ is dense in $\overline C_\alpha$, choose $c_i\in L_K$
so that $j\psi(c_i)$ approximates $b_i$.  Taking all $c_i$ in a common
matrix stage shows that $\sum_i\iota_R(r_i)j\psi(c_i)$ lies in the image.
Moreover,
\[
 \rk\left(x-\sum_{i=1}^h\iota_R(r_i)j\psi(c_i)\right)
 \leq\sum_{i=1}^h\rk(b_i-j\psi(c_i)),
\]
which is arbitrarily small.  The map extends isometrically to
$\mathcal M_{R,\rk}$; its image is closed by completeness and dense by the
preceding estimate, hence equals $\overline A_\alpha(B,R)$.  It preserves the coefficient
map and all matrix ranks.
\end{proof}

An $R$-module $M$ has finite length if it admits a composition series
\[
 0=M_0\subsetneq M_1\subsetneq\cdots\subsetneq M_n=M
\]
with simple successive quotients (nonzero modules with no nonzero
proper submodules).  The number $n$ is independent of the chosen
series and is called the composition length $\ell_R(M)$.
Composition length is additive in short exact sequences; see
\cite[Chapter~6, Propositions~6.7--6.9]{AtiyahMacdonald1969}.
In the following two examples, $R$ is commutative with $\ell_R(R)=2$,
and normalized image length defines a Sylvester matrix rank function.

\begin{example}\label{ex:dual-number-rank}
Let $R=K[t]/(t^2)$ for a field $K$, and write $t$ also for its
residue class in $R$.
Right multiplication relative to the row basis $(1,t)$ gives
the unital embedding
\[
 \lambda:R\longrightarrow\Mat_2(K),\qquad
 \lambda(a+bt)=\begin{pmatrix}a&b\\0&a\end{pmatrix}.
\]
Pullback of normalized matrix rank defines the faithful Sylvester
matrix rank
\[
 \rk_\ell(X)=\tfrac12\operatorname{rank}_K(\lambda(X))
 =\tfrac12\dim_K(R^{1\times r}X),
 \qquad X\in\Mat_{r\times s}(R).
\]
The second equality identifies the same row map in the chosen basis.
The chain $0\subsetneq tR\subsetneq R$ is a composition series,
with both factors isomorphic to $R/(t)\cong K$.
Thus $\ell_R(R)=2$ and $\ell_R(tR)=1$.
Since every simple $R$-module is isomorphic to $K$,
composition length agrees with $K$-dimension for finite-dimensional
$R$-modules.  Hence
\[
 \rk_\ell(X)=\tfrac12\ell_R(R^{1\times r}X).
\]
In particular,
\[
 t^2=0,\qquad \rk_\ell(t)=\tfrac12.
\]
The ring $R$ is not von Neumann regular: since $t$ is central and
$t^2=0$, one has $tyt=0\ne t$ for every $y\in R$.
Nevertheless, $\rk_\ell$ is induced from the regular ring $\Mat_2(K)$.
Since $K\subseteq Z(R)$, \Cref{prop:central-field-transfer}
shows that, for every extreme harmonic function $\alpha$ on a
Bratteli diagram $B$ such that $(B,\alpha)$ is $\alpha$-aperiodic,
\[
 \overline A_\alpha(B,R)\cong\mathcal M_{R,\rk_\ell}
\]
as unital $R$-algebras, with all matrix ranks preserved.
\end{example}

\begin{example}
\label{ex:nonregular-rank}
Let $R=\mathbb Z/4\mathbb Z$.
Since $R$ has characteristic $4$, it contains no unital subfield.
The composition series $0\subsetneq 2R\subsetneq R$ has both
factors isomorphic to $R/2R$, so $\ell_R(R)=2$.
The formula
\[
 \rk_\ell(X):=\frac12\ell_R\bigl(\operatorname{im}(X:R^{s\times1}
        \to R^{r\times1})\bigr),
 \qquad X\in\Mat_{r\times s}(R),
\]
defines a faithful Sylvester matrix rank.  Indeed, length is additive on
direct sums, and $\operatorname{im}(XY)$ is both an image of
$\operatorname{im}Y$ and a submodule of $\operatorname{im}X$, giving
the product inequality.  For $Z=\left(\begin{smallmatrix}X&C\\0&Y\end{smallmatrix}\right)$,
projection onto the second coordinate maps $\operatorname{im}Z$ onto
$\operatorname{im}Y$, with kernel containing $\operatorname{im}X\oplus0$.
Length additivity in this exact sequence gives the block triangular
inequality.  Normalization and faithfulness follow from
$\ell_R(R)=2$ and positivity of the length of a nonzero image.
In particular,
\[
 \rk_\ell(2)=\tfrac12\ell_R(2R)=\tfrac12.
\]
This rank is not induced from a von Neumann regular ring
\cite[Example~2.1.13]{LopezAlvarez2021}; see also
\cite[Remark~4.6]{HungLi2023}.
Indeed, let $f:R\to S$ be a unital homomorphism with $S$ regular.  Then
$u=f(2)=2\cdot1_S$ is central and $u^2=0$.
For $b\in S$ with $u=ubu$, centrality gives $u=bu^2=0$.
Thus $f$ cannot preserve $\rk_\ell(2)=1/2$.
\end{example}

\begin{remark}\label{ex:nonregular-completion}
Both examples yield completions outside the class of continuous factors.
Consider either coefficient rank ring in
\Cref{ex:dual-number-rank,ex:nonregular-rank}, and put $c=t$
or $c=2$, respectively.  In both cases,
\[
 c\in Z(R),\qquad c^2=0,\qquad\rk_\ell(c)=\tfrac12.
\]
Identify $R$ with its coefficient image in $\mathcal M_{R,\rk_\ell}$,
which is injective by faithfulness of $\rk_\ell$.
The element $c$ commutes with every finite-stage matrix.  By continuity
of multiplication, it is central in the completion, where
$c^2=0$ and $\rk_\ell(c)=1/2$ still hold.
Regularity would give $c=cyc=yc^2=0$ for some $y$, a contradiction.
The nonzero ideal $c\mathcal M_{R,\rk_\ell}$ has square zero, so the completion
is also non-simple.  By \Cref{thm:main}, the same conclusions hold for
every Bratteli completion satisfying the extremality and
$\alpha$-aperiodicity hypotheses over either coefficient pair.
\end{remark}

\begin{example}\label{ex:coefficient-rank-dependence}
Fix $R=\mathbb Z$ and take the system \eqref{eq:dyadic-system}.
Equivalently, let $B$ have one vertex at each level and two edges
between successive levels.  Then
\[
 A(B,\mathbb Z)=\varinjlim_n\bigl(\Mat_{2^n}(\mathbb Z),
             \,X\mapsto\diag(X,X)\bigr).
\]
Its unique harmonic function $\alpha$ is extreme and
$(B,\alpha)$ is $\alpha$-aperiodic.  The ring $\mathbb Z$ contains no
unital subfield and admits faithful Sylvester matrix ranks giving
nonisomorphic completions, as follows.

Let $p_1,p_2,\ldots$ enumerate the odd primes.  For an integer matrix
$X$, set
\[
 \rk(X)=\operatorname{rank}_{\mathbb Q}(X),\qquad
 \rk'(X)=\sum_{j=1}^{\infty}2^{-j}
       \operatorname{rank}_{\mathbb F_{p_j}}(X\bmod p_j).
\]
Here $\mathbb F_{p_j}=\mathbb Z/p_j\mathbb Z$, and
$X\bmod p_j$ denotes entrywise reduction.
Both are Sylvester matrix ranks.  For the second, ranks of a fixed
$r\times s$ matrix are bounded by $\min\{r,s\}$, so the series
converges and the rank axioms pass to the positive weighted sum;
normalization follows from $\sum_j2^{-j}=1$.
Countable convex combinations of Sylvester ranks are also discussed
in \cite[Section~3]{JaikinLopez2020}.
Both ranks are faithful: if an entry of $X$ is a nonzero integer,
then $X$ has positive rank over $\mathbb Q$, and it remains nonzero
modulo every prime not dividing that entry.
Normalize each rank by $2^{-n}$ at stage $n$.  The resulting
completions are $\mathcal M_{\mathbb Z,\rk}$ and $\mathcal M_{\mathbb Z,\rk'}$.
We use $\rk$ and $\rk'$ also for the induced ranks on the direct
limit and its completions.  We distinguish the completions by
invertibility of $2\cdot1$.

For $Y\in\Mat_{2^n}(\mathbb Z)$,
$2Y-1_{2^n}\equiv-1_{2^n}\pmod2$, so its determinant is odd and
in particular nonzero.  The normalized rank therefore satisfies
\[
 2^{-n}\operatorname{rank}_{\mathbb Q}(2Y-1_{2^n})=1.
\]
Consequently $d_{\rk}(2b,1)=1$ for every $b\in A(B,\mathbb Z)$.
If $x\in \mathcal M_{\mathbb Z,\rk}$ satisfied $2x=1$, choose $b_k\in A(B,\mathbb Z)$
converging to $x$.  The product inequality would give
\[
 1=d_{\rk}(2b_k,1)
  =d_{\rk}(2b_k,2x)
  \leq d_{\rk}(b_k,x)\longrightarrow0,
\]
a contradiction.  Thus $2\cdot1$ is not invertible in $\mathcal M_{\mathbb Z,\rk}$.

For each $N$, the Chinese remainder theorem gives $b_N\in\mathbb Z$
with $2b_N\equiv1\pmod{p_j}$ for $1\leq j\leq N$.
For $M\geq N$, the residues of $b_M$ and $b_N$ agree at these primes,
and hence
\[
 \rk'(b_M-b_N)\leq\sum_{j>N}2^{-j}=2^{-N},
 \qquad
 \rk'(2b_N-1)\leq2^{-N}.
\]
The coefficient images of $b_N$ thus converge in $\mathcal M_{\mathbb Z,\rk'}$
to an element $b$ satisfying $2b=b2=1$.  Since any ring isomorphism
between unital rings preserves $2\cdot1$ and its invertibility,
\[
 \mathcal M_{\mathbb Z,\rk}\not\cong \mathcal M_{\mathbb Z,\rk'}.
\]
\end{example}

\subsection{Corner isomorphisms}
\label{subsec:prescribed-rank-corners}

For a rank ring $(S,\rk_S)$ and an idempotent $e\in S$ with
$\rk_S(e)>0$, the function
\[
 X\longmapsto\frac{\rk_S(X)}{\rk_S(e)}
 \qquad\bigl(X\in\Mat_{a\times b}(eSe)\bigr)
\]
is a Sylvester matrix rank on the unital ring $eSe$, whose identity is $e$.
Indeed, normalization holds at $e$, and the other rank axioms are
inherited from $S$ and preserved by positive rescaling.

For a unital regular rank ring $R$ whose rank completion has no
nontrivial central idempotents,
Halperin \cite[Theorem~2]{Halperin1968} gives a ring isomorphism
$Q\cong eQe$ for every nonzero idempotent $e\in Q$.
The proof first constructs corners of prescribed rank using
\cite[Lemma~1]{Halperin1968}, and then compares arbitrary idempotents
of equal rank by the dimension theory of continuous regular factors
\cite[Lemma~4(ii)]{Halperin1968}.

For general $(R,\rk)$, a nonzero corner need not be isomorphic to $Q$;
see \Cref{ex:product-corner-classification}.
For a factor sequence $\mu$, we also denote by $\rk_\mu$ the continuous
extension of its rank to the rank completion $\overline{R_\mu}$.

\begin{proposition}\label{prop:scaled-corner-embedding}
Let $\mu=(n_i)$ and $\nu=(m_i)$ be factor sequences such that
\[
 c_i:=\frac{n_i}{m_i}\searrow\theta>0,\qquad c_1\leq1.
\]
There is an idempotent $e\in\overline{R_\nu}$ commuting with the
coefficient image of $R$, with $\rk_\nu(e)=\theta$, and an isomorphism
\[
 \Phi:(\overline{R_\mu},\rk_\mu)
 \longrightarrow(e\overline{R_\nu}e,\theta^{-1}\rk_\nu)
\]
of ranked unital $R$-algebras, where the coefficient map of the corner
is $r\mapsto e\iota_\nu(r)$.
\end{proposition}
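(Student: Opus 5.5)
We construct compatible non-unital $R$-compatible embeddings
\[
 \Phi_i\colon\Mat_{n_i}(R)\longrightarrow\Mat_{m_i}(R),\qquad
 x\longmapsto\diag(x,0_{m_i-n_i}),
\]
suitably rearranged so that they intertwine the connecting maps exactly. Then we pass to the limit and the completion. Since $c_1\leq1$ and $c_i$ decreases, we have $n_i\leq m_i$ for all $i$. Put $a_i=n_{i+1}/n_i$ and $b_i=m_{i+1}/m_i$; then $c_{i+1}\leq c_i$ means $a_i\leq b_i$.

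The plan is to define $\Phi_i$ recursively as $\Ad(P_i)$ applied to the corner embedding, with $P_i\in\Mat_{m_i}(R)$ a permutation matrix. The square
\[
 \Phi_{i+1}\circ\jmath_{n_i,a_i}=\jmath_{m_i,b_i}\circ\Phi_i
\]
fails for the naive corner maps. On the left we get $a_i$ copies of $x$ inside an $m_{i+1}$-block. On the right we get $b_i$ copies of $\Phi_i(x)$, hence $b_i$ copies of $x$, since each $\Phi_i(x)$ contains one copy of $x$. Since $a_i\leq b_i$, the counts do not match.

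Exact compatibility is therefore impossible, and instead we will aim for approximate compatibility with summable errors. Define $\Phi_{i+1}$ so that the $a_i$ copies of $x$ from $\jmath_{n_i,a_i}(x)$ sit exactly at the positions of the first $a_i$ of the $b_i$ copies of $x$ in $\jmath_{m_i,b_i}\Phi_i(x)$. This can be done by choosing $P_{i+1}$ as an alignment permutation, exactly as for the $(m,n,q)$-alignments of \Cref{subsec:finite-matrix-models}. The remaining difference consists of $b_i-a_i$ copies of $x$.

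Measure this difference in $\rk_{m_{i+1}}$ after normalizing by $\theta^{-1}$, and compare with $\rk_{n_i}(x)$ on the corner. The error is at most $d(b_i-a_i)n_i/m_{i+1}$ on $d\times d$ matrices, and
\[
 \frac{(b_i-a_i)\,n_i}{m_{i+1}}=c_i-c_{i+1},
\]
which is summable with sum $c_1-\theta$. The rank distortion of $\Phi_i$ into the rescaled corner is also controlled: $\rk_{m_i}(\Phi_i(X))=c_i\rk_{n_i}(X)$, so after dividing by $\theta$ the distortion is $d(c_i/\theta-1)\to0$.

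We then run the Cauchy-limit argument from the proof of \Cref{prop:summable}. For $x\in\Mat_{n_i}(R)$, the sequence $\Phi_j(\jmath(x))$ for $j\geq i$ is Cauchy in $\overline{R_\nu}$, with consecutive differences bounded by $c_j-c_{j+1}$. Its limit $\Phi(x)$ defines a ring homomorphism from $R_\mu$ into $\overline{R_\nu}$ which is $R$-compatible. The images of $1$ are the idempotents $\Phi_j(1)$, which are Cauchy, so their limit $e$ is an idempotent. Its rank is
\[
 \rk_\nu(e)=\lim_j c_j=\theta.
\]
The element $e$ commutes with the coefficient image, since each $\Phi_j(1)$ is a diagonal $0/1$ matrix and permutations commute with scalar diagonals. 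Limits of the rank identities show that $\Phi$ maps into $e\overline{R_\nu}e$ and that $\theta^{-1}\rk_\nu\circ\Phi=\rk_\mu$ on all matrices; rectangular matrices are handled as in \Cref{prop:summable}. Hence $\Phi$ extends isometrically to $\overline{R_\mu}$. It is unital onto the corner, with coefficient map $r\mapsto e\iota_\nu(r)$.

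The main obstacle is surjectivity onto $e\overline{R_\nu}e$, since a single isometric embedding need not be onto. For this we will build an approximate inverse $\Psi_j\colon\Mat_{m_j}(R)\to\Mat_{n_{j'}}(R)$ for suitable $j'>j$, using the copy-counting maps of \Cref{subsec:finite-matrix-models}. Compress an element $y\in\Phi_j(1)\Mat_{m_j}(R)\Phi_j(1)$, which is effectively an $n_j\times n_j$ block $y_0$. Then show that, for large $k$, the element $\Phi_k(\jmath(y_0))$ is within $c_j-\theta$ of $\jmath(y)$ in rank, because the matched copies cancel.

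Next we show that $e\,\overline{R_\nu}\,e$ is the closure of the union of the sets $\Phi_j(1)R_\nu\Phi_j(1)$; this uses $d(e,\Phi_j(1))\to0$ and the inequality
\[
 d(eze,\Phi_j(1)z\Phi_j(1))\leq2d(e,\Phi_j(1)).
\]
These compressed elements lie within $O(c_j-\theta)$ of the image of $\Phi$, and that image is closed. Therefore $\Phi$ is onto the corner.
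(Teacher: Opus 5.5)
Your proposal is correct and is essentially the paper's proof. Your aligned corner embeddings are the paper's maps $f_i(x)=J_ixJ_i^T$ with $J_{i+1}=\bigl(\begin{smallmatrix}1_{a_i}\otimes J_i\\0\end{smallmatrix}\bigr)$ (your $a_i=n_{i+1}/n_i$), with the same errors $(c_i-c_{i+1})\rk_{n_i}(x)$, the same Cauchy limit satisfying $\rk_\nu\circ\Phi=\theta\rk_\mu$ and $e=\lim_i[e_i]$, and the same surjectivity argument by compressing approximants $a_i$ to $J_i^Ta_iJ_i$; two wording points need fixing: the approximate inverse $\Psi_j$ is unnecessary, and the corner is not literally the closure of $\bigcup_j\Phi_j(1)R_\nu\Phi_j(1)$, since that union contains elements outside the corner, though each corner element is a limit of compressions $\Phi_j(1)z_j\Phi_j(1)$ with $j\to\infty$, which is all your estimate uses.
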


\begin{proof}
We use the construction of Halperin \cite[Lemma~1]{Halperin1968}.  Its rank estimates require only the
Sylvester rank axioms and apply to rectangular matrices as follows.
Put $k_i=n_{i+1}/n_i$ and $l_i=m_{i+1}/m_i$, so $k_i\leq l_i$, and set
\[
 J_1=\begin{pmatrix}1_{n_1}\\0\end{pmatrix},\qquad
 J_{i+1}=\begin{pmatrix}1_{k_i}\otimes J_i\\0\end{pmatrix},\qquad
 f_i(x)=J_ixJ_i^T,\qquad e_i=J_iJ_i^T.
\]
Here $J_i\in\Mat_{m_i\times n_i}(R)$ and $J_i^TJ_i=1_{n_i}$, so
$f_i$ is an isomorphism onto $e_i\Mat_{m_i}(R)e_i$.
Coordinate insertion and block additivity give, for every rectangular
matrix $X$ over $\Mat_{n_i}(R)$,
\[
 \begin{aligned}
 \rk_{m_i}(f_i(X))&=c_i\rk_{n_i}(X),\\
 \rk_{m_{i+1}}\bigl(
 \jmath_{m_i,l_i}f_i(X)-f_{i+1}\jmath_{n_i,k_i}(X)\bigr)
 &=(c_i-c_{i+1})\rk_{n_i}(X).
 \end{aligned}
\]
Here all maps act entrywise.  For an element $x$, the difference
consists of $l_i-k_i$ copies of $f_i(x)$ and zero blocks; the same
calculation applies to rectangular matrices after flattening and
coordinate permutations.

For $x\in\Mat_{n_i}(R)$, the second identity makes
\[
 \Phi([x]):=\lim_{j\to\infty}
 [f_j(1_{n_j/n_i}\otimes x)]
\]
well defined: the rank distance between the terms indexed by $h$ and
$j>h\geq i$ is at most $(c_h-c_j)\rk_{n_i}(x)$.
Changing a stage representative leaves the tail of this sequence unchanged.
Continuity of addition and multiplication therefore gives a ring
homomorphism on the algebraic limit.  The first identity gives
\[
 \rk_\nu(\Phi(X))=\theta\rk_\mu(X).
\]
Thus $\Phi$ descends through the rank-zero quotient and extends to the
rank completion, with closed image; continuity preserves this
identity on every fixed rectangular matrix space.

The diagonal idempotents $[e_i]$ decrease to $e=\Phi(1)$, with
$\rk_\nu(e)=\theta$ and $\rk_\nu([e_i]-e)=c_i-\theta$.
Since $\Phi$ is multiplicative, every element of its image satisfies
$\Phi(x)=e\Phi(x)e$, so the image lies in the corner
$e\overline{R_\nu}e$.
Each $e_i$ has entries in $\{0,1_R\}$.  Coordinate insertion and
passage to the limit give
\[
 e\iota_\nu(r)=\iota_\nu(r)e,\qquad
 \Phi(\iota_\mu(r))=e\iota_\nu(r)\quad(r\in R).
\]
This is the coefficient compatibility noted in
\cite[Remark~1]{Halperin1968}.
Finally, for $z=eze$, choose $a_i\in\Mat_{m_i}(R)$ with
$\rk_\nu(z-[a_i])\to0$ and put $b_i=J_i^Ta_iJ_i$.
Since $f_i(b_i)=e_ia_ie_i$ and $e[e_i]=[e_i]e=e$, compression gives
\[
 \rk_\nu(z-[e_ia_ie_i])
 \leq\rk_\nu(z-[a_i])+2\rk_\nu([e_i]-e).
\]
The bound for the tail of the defining sequence, applied to $b_i$, gives
\[
 \rk_\nu([f_i(b_i)]-\Phi([b_i]))
 \leq(c_i-\theta)\rk_{n_i}(b_i)\leq c_i-\theta.
\]
Combining these inequalities yields
\[
 \rk_\nu(z-\Phi([b_i]))
 \leq\rk_\nu(z-[a_i])+3(c_i-\theta)\longrightarrow0.
\]
Hence the image is dense in the corner; closedness proves surjectivity.
\end{proof}

\Needspace{12\baselineskip}
\begin{corollary}\label{cor:prescribed-rank-corners}
For every $0<\theta\leq1$, there is an
idempotent $e_\theta\in Q$ commuting with the coefficient image of $R$
such that
\[
 \rk(e_\theta)=\theta,\qquad
 (Q,\rk)\cong(e_\theta Qe_\theta,\theta^{-1}\rk)
\]
as ranked unital $R$-algebras, with corner coefficient map
$r\mapsto e_\theta\iota_Q(r)$.  In particular,
\[
 \Set*{\rk(e) \given e\in Q,\ e^2=e}=[0,1].
\]
The same conclusions hold for every completion in \Cref{thm:main}.
\end{corollary}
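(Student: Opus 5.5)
The plan is to apply \Cref{prop:scaled-corner-embedding} with $\nu=(2^i)$, so that $\overline{R_\nu}=Q$, and a factor sequence $\mu=(n_i)$ chosen so that $c_i=n_i/2^i$ decreases to $\theta$. Then \Cref{cor:factor-sequence-uniqueness} identifies $\overline{R_\mu}$ with $Q$, and composing the two isomorphisms gives the claim.

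\textbf{Choice of $\mu$.} If $\theta=1$, take $e_1=1$ and the identity map. For $0<\theta<1$, set $n_i:=\lfloor\theta 3^i\rfloor$ and $m_i:=3^i$; this is cleaner than a dyadic choice. Divisibility of consecutive floors fails in general, so I will instead build the sequence recursively.

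\textbf{Recursive construction.} Put $m_i=2^{j_i}$ and let $n_i$ divide $n_{i+1}$ with $n_{i+1}/m_{i+1}\le n_i/m_i$ and $n_i/m_i\to\theta$. Start with $n_1=1$ and $m_1=2^{j_1}$ with $2^{-j_1}\ge\theta$; the case where $\theta$ is close to $1$ forces $m_1=1$, and that case is handled at later steps. Given $(n_i,m_i)$ with $c_i\ge\theta$, choose $l=2^{a}$ large and set
\[
k:=\left\lfloor \frac{\theta\, l\, m_i}{n_i}\right\rfloor \quad\text{if this is}\ \ge1 .
\]
Then $n_{i+1}=kn_i$ and $m_{i+1}=lm_i$ give
\[
c_{i+1}=\frac{k n_i}{l m_i}\le\theta\le c_i,
\]
which is the wrong direction. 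Use the ceiling instead: $k=\lceil\theta l m_i/n_i\rceil$. This gives $c_{i+1}\ge\theta$ and $c_{i+1}-\theta\le n_i/(lm_i)$, which can be made small. Since also $c_{i+1}\le c_i$ by $k\le l$ (valid because $\theta<c_i$ and $l$ is large), we get $c_i\searrow\theta$ and $c_1\le1$. The sequence $(n_i)$ must be unbounded; this holds because $m_i\to\infty$ and $c_i\to\theta>0$. If $\theta=c_i$ exactly at some stage, take $k=l$ from then on, so that $n_i\to\infty$ still holds.

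\textbf{Conclusion.} The proposition then yields an idempotent $e_\theta$ commuting with $\iota(R)$, with $\rk(e_\theta)=\theta$, and a ranked $R$-algebra isomorphism $\overline{R_\mu}\cong e_\theta Qe_\theta$. Composing with \Cref{cor:factor-sequence-uniqueness} gives $(Q,\rk)\cong(e_\theta Qe_\theta,\theta^{-1}\rk)$. The coefficient map is respected because both isomorphisms are $R$-algebra maps, and the corner carries the coefficient map $r\mapsto e_\theta\iota_Q(r)$.

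The value set contains $[0,1]$ by taking $e=0$ and $e=e_\theta$. Conversely, every idempotent $e$ satisfies $\rk(e)\le\rk(1)=1$. For completions covered by \Cref{thm:main}, transport the result through the rank-preserving isomorphism $\overline A_\alpha(B,R)\cong Q$; this isomorphism carries idempotents commuting with $\iota(R)$ to idempotents commuting with $\iota(R)$, and it preserves corners and scaled ranks.

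\textbf{Main obstacle.} The delicate step is the recursive construction of $\mu$. Both monotonicity $c_{i+1}\le c_i$ and the divisibility constraints must hold, which means checking that $k\le l$, and the edge cases $\theta=c_i$ and $c_1\le1$ need care.
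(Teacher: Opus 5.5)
Your proof is correct and follows the paper's overall route: apply \Cref{prop:scaled-corner-embedding}, identify the factor-sequence completions with $Q$, and transport the result along \Cref{thm:main}. The only real difference is how the two factor sequences are produced.

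The paper takes any integers $a_i\le b_i$ with $a_i/b_i\searrow\theta$ and sets $L_i=2n_{i-1}m_{i-1}$, $n_i=a_iL_i$, $m_i=b_iL_i$. Divisibility is then automatic, and $n_i/m_i=a_i/b_i$ inherits the monotone convergence. The cost is that both $\overline{R_\mu}$ and $\overline{R_\nu}$ must be identified with $Q$ through \Cref{cor:factor-sequence-uniqueness}.

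Your ceiling construction keeps $m_i=2^{j_i}$. So $\overline{R_\nu}$ is identified with $Q$ by cofinality alone (\Cref{prop:cofinal-direct-limit}), and the main theorem is needed only for $\mu$. The price is verifying $c_{i+1}\le c_i$ by hand. This does hold: $k=\lceil\theta l/c_i\rceil\le l$ for every integer $l$ once $\theta\le c_i$, so largeness of $l$ is not needed there. It is needed only for $c_i\to\theta$, via $c_{i+1}-\theta<c_i/l$.

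In a written version, delete the abandoned $3^i$ and floor attempts. Also note explicitly that $\nu=(2^{j_i})$ is a cofinal subsystem of the dyadic system, rather than $(2^i)$ itself.
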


\begin{proof}
Choose positive integers $a_i,b_i$ with $a_i\leq b_i$ and
$a_i/b_i\searrow\theta$.  Starting with $n_0=m_0=1$, put
\[
 L_i=2n_{i-1}m_{i-1},\qquad n_i=a_iL_i,\qquad m_i=b_iL_i.
\]
Both sequences are factor sequences, since
\[
 \frac{n_i}{n_{i-1}}=2a_im_{i-1}\geq2,\qquad
 \frac{m_i}{m_{i-1}}=2b_in_{i-1}\geq2,
 \qquad \frac{n_i}{m_i}=\frac{a_i}{b_i}\searrow\theta.
\]
Apply \Cref{prop:scaled-corner-embedding} and identify both factor-sequence
completions with $Q$ by \eqref{eq:factor-sequence-completion}.  This proves the corner assertion;
$e=0$ supplies rank zero.  The final assertion follows by transporting
these idempotents and isomorphisms along \Cref{thm:main}.
\end{proof}

Let
\[
 C=\Set*{a\in Q\given
 a\iota_Q(r)=\iota_Q(r)a\text{ for every }r\in R}.
\]
For $e^2=e\in C$, the corner $eQe$ has coefficient map
$r\mapsto e\iota_Q(r)$.

\begin{proposition}\label{prop:corner-isomorphism-criteria}
Let $0\ne e=e^2\in C$ and put $\theta=\rk(e)$.
\begin{enumerate}[label=(\alph*), ref=\alph*]
\item\label{item:corner-necessary}
If there is an isomorphism
\begin{equation}\label{eq:normalized-corner-isomorphism}
 (Q,\rk)\cong(eQe,\theta^{-1}\rk)
\end{equation}
of ranked unital $R$-algebras, then
\begin{equation}\label{eq:corner-coefficient-ranks}
 \rk\bigl((e\iota_Q(x_{ij}))_{i,j}\bigr)
 =\theta\rk_R(X)
 \qquad\bigl(X=(x_{ij})\in\Mat_{a\times b}(R),\ a,b\geq1\bigr),
\end{equation}
where $\rk_R$ is the specified rank on $R$.
\item\label{item:corner-sufficient}
Let $e_\theta$ be an idempotent supplied by
\Cref{cor:prescribed-rank-corners}.
If there exist
\[
 u\in eCe_\theta,\qquad v\in e_\theta Ce,
 \qquad uv=e,\quad vu=e_\theta,
\]
then \eqref{eq:normalized-corner-isomorphism} holds.
\end{enumerate}
\end{proposition}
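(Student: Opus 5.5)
For part (\ref{item:corner-necessary}), let $\Psi\colon(Q,\rk)\to(eQe,\theta^{-1}\rk)$ be the given isomorphism. Being an isomorphism of ranked unital $R$-algebras, it satisfies $\Psi(\iota_Q(r))=e\iota_Q(r)$ and preserves ranks of all rectangular matrices, where $eQe$ carries the normalized rank $\theta^{-1}\rk$. The coefficient map $\iota_Q$ is rank-preserving from $(R,\rk_R)$, since the diagonal embeddings $r\mapsto 1_{2^k}\otimes r$ satisfy $\rk_{2^k}(1_{2^k}\otimes X)=\rk_R(X)$ and ranks extend continuously to the completion. Hence, for $X=(x_{ij})\in\Mat_{a\times b}(R)$,
\[
 \theta^{-1}\rk\bigl((e\iota_Q(x_{ij}))_{i,j}\bigr)
 =\theta^{-1}\rk\bigl(\Psi(\iota_Q(X))\bigr)
 =\rk(\iota_Q(X))=\rk_R(X).
\]
Here the rank on matrices over the corner $eQe$ is the restriction of the matrix rank of $Q$, as in the definition of the corner rank. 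This gives \eqref{eq:corner-coefficient-ranks} immediately.

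For part (\ref{item:corner-sufficient}), I would build the isomorphism
\[
 \Ad_{u,v}\colon e_\theta Qe_\theta\to eQe,\qquad z\mapsto uzv,
\]
and compose it with the isomorphism $(Q,\rk)\cong(e_\theta Qe_\theta,\theta^{-1}\rk)$ from \Cref{cor:prescribed-rank-corners}. The argument proceeds in four steps.

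\emph{Ring isomorphism.} Since $u=eue_\theta$ and $v=e_\theta ve$, we have $uzv\in eQe$. Multiplicativity follows from $uz v\,uz'v=uz e_\theta z'v=uzz'v$. Unitality follows from $ue_\theta v=uv=e$. The inverse is $w\mapsto vwu$, because $vuzvu=e_\theta ze_\theta=z$, and similarly in the other direction.

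\emph{Coefficient compatibility.} Since $u\in C$, we have $u(e_\theta\iota_Q(r))v=\iota_Q(r)ue_\theta v=\iota_Q(r)e=e\iota_Q(r)$, using $e\in C$.

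\emph{Rank preservation.} For a rectangular matrix $Z$ over $e_\theta Qe_\theta$, the matrix of $uZv$ is $(1_a\otimes u)Z(1_b\otimes v)$. The product axiom gives $\rk(uZv)\le\rk(Z)$. Conversely, $Z=(1\otimes v)(uZv)(1\otimes u)$, so $\rk(Z)\le\rk(uZv)$. Because $\rk(e)=\theta=\rk(e_\theta)$, the two normalizations $\theta^{-1}\rk$ agree, and $\Ad_{u,v}$ is an isomorphism of ranked unital $R$-algebras.

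\emph{Composition.} Composing $\Ad_{u,v}$ with the isomorphism of \Cref{cor:prescribed-rank-corners} yields \eqref{eq:normalized-corner-isomorphism}.

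The only point needing care is the bookkeeping of coefficient maps in the two corners, and this is where the hypotheses $u,v\in C$ enter. The rest is routine. Part (\ref{item:corner-necessary}) involves no obstacle beyond noting that $\iota_Q$ preserves rectangular ranks.
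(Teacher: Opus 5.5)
Your proposal is correct and follows the paper's proof essentially step for step. Part (a) evaluates the given isomorphism on $\iota_Q(X)$ and uses that $\iota_Q$ preserves rectangular ranks, which you justify and the paper simply asserts; part (b) uses the mutually inverse maps $z\mapsto uzv$ and $w\mapsto vwu$, coefficient compatibility from $u,v\in C$, the product axiom in both directions for rank preservation, and composition with the isomorphism of \Cref{cor:prescribed-rank-corners}.
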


\begin{proof}
For (\ref{item:corner-necessary}), let $\Phi$ be such an isomorphism.
Since $\Phi$ preserves the coefficient map,
$\Phi(\iota_Q(r))=e\iota_Q(r)$ for every $r\in R$.
The coefficient map $\iota_Q$ preserves matrix ranks, so
\[
 \rk\bigl((e\iota_Q(x_{ij}))_{i,j}\bigr)
 =\rk(\Phi(\iota_Q(X)))
 =\theta\rk(\iota_Q(X))
 =\theta\rk_R(X).
\]

For (\ref{item:corner-sufficient}), the maps
\[
 \Psi:e_\theta Qe_\theta\longrightarrow eQe,\quad a\longmapsto uav,
 \qquad
 \Psi^{-1}:eQe\longrightarrow e_\theta Qe_\theta,\quad b\longmapsto vbu
\]
are inverse ring homomorphisms because $vu=e_\theta$ and $uv=e$.
Since $u,v\in C$,
\[
 \Psi(e_\theta\iota_Q(r))=u\iota_Q(r)v=e\iota_Q(r).
\]
Thus $\Psi$ is an $R$-algebra isomorphism.
For a rectangular matrix $A$ over $e_\theta Qe_\theta$, multiplication
by diagonal matrices with entries $u$ and $v$, followed by the same
argument for $\Psi^{-1}$, gives
\[
 \rk(\Psi(A))\leq\rk(A)
 =\rk(\Psi^{-1}\Psi(A))\leq\rk(\Psi(A)).
\]
Hence $\Psi$ preserves every matrix rank.  Composing it with the
isomorphism from \Cref{cor:prescribed-rank-corners} proves the claim.
\end{proof}

\begin{example}\label{ex:product-corner-classification}
Let $R=\mathbb F_2\times\mathbb F_3$ with matrix rank
\[
 \rk_R(X_2,X_3)
 =\tfrac12\bigl(\operatorname{rank}_{\mathbb F_2}(X_2)+\operatorname{rank}_{\mathbb F_3}(X_3)\bigr).
\]
Put $Q_p=\mathcal M_{\mathbb F_p}$ and write $\rk_{Q_p}$ for its matrix rank
in this example.  The finite-stage product decompositions commute
with the connecting maps, and a sequence is rank Cauchy if and only if
both component sequences are rank Cauchy.  Therefore
\[
 Q=Q_2\times Q_3,\qquad
 \rk(X_2,X_3)=\tfrac12\bigl(\rk_{Q_2}(X_2)+\rk_{Q_3}(X_3)\bigr).
\]
Here $C$ denotes the commutant of the coefficient image defined
above.  Since that image is central, $C=Q$.
For $0\ne e=(e_2,e_3)=e^2$, set
\[
 t_p=\rk_{Q_p}(e_p)\quad(p=2,3),\qquad
 \theta=\rk(e)=\tfrac12(t_2+t_3).
\]
Then
\begin{equation}\label{eq:product-corner-criterion}
 (Q,\rk)\cong(eQe,\theta^{-1}\rk)
 \text{ as ranked unital }R\text{-algebras}
 \quad\Longleftrightarrow\quad t_2=t_3>0.
\end{equation}
Indeed, applying \eqref{eq:corner-coefficient-ranks} to $(1,0)$ and
$(0,1)$ gives $t_2/2=\theta/2=t_3/2$, proving necessity.
For sufficiency, suppose that $t_2=t_3=\theta$ and choose
$e_\theta=(h_2,h_3)$ as in \Cref{cor:prescribed-rank-corners}.
The same necessary condition gives $\rk_{Q_p}(h_p)=\theta$ for $p=2,3$.
In each continuous factor $Q_p$, idempotents of equal rank are
equivalent: there exist
\[
 u_p\in e_pQ_ph_p,\qquad v_p\in h_pQ_pe_p,
 \qquad u_pv_p=e_p,\quad v_pu_p=h_p;
\]
see the proof of \cite[Lemma~4(ii)]{Halperin1968}.
The elements $u=(u_2,u_3)$ and $v=(v_2,v_3)$ satisfy
\Cref{prop:corner-isomorphism-criteria}(\ref{item:corner-sufficient}),
which proves sufficiency.

If only a ring isomorphism is required, then
\[
 Q\cong eQe\quad\Longleftrightarrow\quad e_2\ne0\text{ and }e_3\ne0.
\]
When both components are nonzero, apply
\cite[Theorem~2]{Halperin1968} to each factor.
If one component is zero, $eQe$ has characteristic $2$ or $3$,
whereas $Q$ has characteristic $6$.
In particular, $e=(1_{Q_2},0)$ and the idempotent $e_{1/2}$ from
\Cref{cor:prescribed-rank-corners} both have rank $1/2$, but only the
latter has a corner isomorphic to $Q$.
\end{example}

\Needspace{10\baselineskip}
\section{Comparison of completions}
\label{sec:af-completions}

In this section, we specialize to complex coefficients and use
constructions and results from operator algebra theory to clarify
the relationships among the various completions and closures of
$A(B,\mathbb C)$.

\subsection{\texorpdfstring{$C^*$}{C*}-completion and classification}
\label{subsec:af-dimension-traces}

Fix a Bratteli diagram $B$ and write
\[
 A_n=A_n(B,\mathbb C)
 =\bigoplus_{v\in V_n}\Mat_{p_n(v)}(\mathbb C),\qquad
 A(B,\mathbb C)=\varinjlim_n A_n.
\]
Each $A_n$ has its usual involution and norm
$\|(x_v)_v\|=\max_v\|x_v\|$.  The connecting maps preserve
the involution and norm, giving the $C^*$-completion
\[
 \mathcal A_B=\overline{A(B,\mathbb C)}^{\,\|\cdot\|}.
\]
It is a unital separable AF $C^*$-algebra \cite{Bratteli1972}.

Elliott \cite{Elliott1976} classified unital separable AF $C^*$-algebras
by their ordered $K_0$-groups with order unit; see also \cite{Effros1981}.  For $\mathcal A_B$,
\begin{equation}\label{eq:af-dimension-group}
 \bigl(K_0(\mathcal A_B),K_0(\mathcal A_B)^+,[1]\bigr)
 \cong\varinjlim_n
 \bigl(\mathbb Z^{V_n},\mathbb Z_+^{V_n},p^{(n)};F_n\bigr).
\end{equation}

\begin{example}\label{ex:uhf-classification}
Let $B_d$ be the one-vertex diagram with $d\geq2$ edges between
successive levels.  By \Cref{subsec:factor-sequence-models},
$p_n(v_n)=d^n$ and the unique harmonic function is $\alpha_n(v_n)=1$.
Its $C^*$-completion is the UHF algebra of type $d^\infty$
\cite{Glimm1960}; for $d=2$, this is the CAR algebra.  Moreover,
\[
 \bigl(K_0(\mathcal A_{B_d}),K_0(\mathcal A_{B_d})^+,[1]\bigr)
 \cong(\mathbb Z[1/d],\mathbb Z[1/d]_+,1).
\]
For integers $d,e\geq2$, the $C^*$-algebras $\mathcal A_{B_d}$
and $\mathcal A_{B_e}$ are isomorphic if and only if $d$ and $e$
have the same prime divisors.
\end{example}

\subsection{Traces and von Neumann algebras}
\label{subsec:trace-ranks}

Write $T(\mathcal A_B)$ for the space of tracial states on
$\mathcal A_B$, equipped with the weak-* topology.

\begin{proposition}\label{prop:trace-harmonic}
For the block units $e_{n,v}$, the assignment
$\alpha_n^\tau(v)=\tau(e_{n,v})$ gives affine homeomorphisms
\[
 T(\mathcal A_B)\cong\mathcal H(B)
 \cong\operatorname{Prob}_{\mathrm{cent}}(X_B).
\]
Extreme traces correspond to extreme harmonic functions and tail-ergodic
central measures.
\end{proposition}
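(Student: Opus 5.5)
The plan is to build the map from traces to harmonic functions directly, check that it is an affine homeomorphism, and then compose it with \Cref{prop:harmonic-central-measure}.

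\emph{Well-definedness.} Let $\tau\in T(\mathcal A_B)$ and put $\alpha_n^\tau(v)=\tau(e_{n,v})$. Each $e_{n,v}$ is a projection, so $\alpha_n^\tau(v)\in[0,1]$. The block units $e_{n,v}$, $v\in V_n$, sum to $1$, so $\sum_v\alpha_n^\tau(v)=1$.

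\emph{Harmonicity.} By \eqref{eq:bratteli-composite-map}, the image $\phi_{n+1}(e_{n,v})$ has $a_{w,v}$ diagonal copies of $1_{p_n(v)}$ in the $w$-component. On each simple summand $\Mat_{p_{n+1}(w)}(\mathbb C)$, the restriction of a trace is a multiple of the trace $\operatorname{Tr}$. Consequently every minimal projection in the $w$-block has trace $\tau(e_{n+1,w})/p_{n+1}(w)$. Counting the $a_{w,v}p_n(v)$ minimal projections contained in $\phi_{n+1}(e_{n,v})$ gives exactly \eqref{eq:harmonic}.

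\emph{Inverse map.} Given $\alpha\in\mathcal H(B)$, define on $A_n$
\[
 \tau_\alpha\bigl((x_v)_v\bigr)=\sum_{v}\frac{\alpha_n(v)}{p_n(v)}\operatorname{Tr}(x_v).
\]
This is a positive, unital, tracial functional. It is compatible with the connecting maps by the same calculation as in \Cref{prop:rank-compatible}, with $\operatorname{Tr}$ in place of $\rk$. Positivity gives $|\tau_\alpha(x)|\le\|x\|$, so $\tau_\alpha$ extends by continuity to a tracial state on $\mathcal A_B$.

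\emph{Bijection.} A trace is determined by its values on $A(B,\mathbb C)$, which is dense. On each $A_n$ it is in turn determined by its values on the block units, since every trace on a matrix summand is a multiple of $\operatorname{Tr}$. Hence the two maps are mutually inverse. Both are visibly affine.

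\emph{Topology.} The map $\tau\mapsto\alpha^\tau$ is weak-* continuous, because each coordinate is evaluation at a fixed element $e_{n,v}$. Both $T(\mathcal A_B)$ and $\mathcal H(B)$ are compact Hausdorff, the former as a weak-* closed subset of the state space. A continuous bijection between such spaces is a homeomorphism, so no separate check of the inverse is needed.

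\emph{Conclusion.} Composing with \Cref{prop:harmonic-central-measure} gives the second homeomorphism $\mathcal H(B)\cong\operatorname{Prob}_{\mathrm{cent}}(X_B)$. Affine bijections preserve extreme points, so extreme traces correspond to extreme harmonic functions. These in turn correspond to tail-ergodic central measures by \Cref{prop:harmonic-central-measure}.

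The main point needing care is injectivity, specifically that a trace on $A_n$ is determined by its values on block units. This follows from the uniqueness, up to scaling, of the trace on a full matrix algebra: $\tau(e_{ij})=\tau(e_{ij}e_{jj})=\tau(e_{jj}e_{ij})=0$ for $i\ne j$, and all the diagonal matrix units are conjugate.
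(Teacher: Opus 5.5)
Your proof is correct and follows essentially the same route as the paper. Both arguments determine a trace at each stage by its block weights, read off harmonicity from the block multiplicities, invert via the weighted-trace formula extended through the bound $|\tau(x)|\leq\|x\|$, obtain the homeomorphism from weak-* continuity of evaluation at the block units together with compactness, and then compose with \Cref{prop:harmonic-central-measure}; you also spell out details the paper leaves implicit, namely the matrix-unit argument for uniqueness of traces on full matrix algebras and the count of minimal projections.
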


\begin{proof}
At each stage, a trace is determined by its block weights:
\begin{equation}\label{eq:finite-trace-weights}
 \tau((x_v)_v)=\sum_{v\in V_n}\alpha_n^\tau(v)
                    \operatorname{tr}_{p_n(v)}(x_v).
\end{equation}
The block multiplicities give
\[
 \alpha_n^\tau(v)=\sum_{w\in V_{n+1}}
       \frac{p_n(v)a_{w,v}}{p_{n+1}(w)}\alpha_{n+1}^\tau(w).
\]
Conversely, harmonic weights give compatible tracial states by the same
formula; the bound $|\tau(x)|\leq\|x\|$ extends them to $\mathcal A_B$.
The constructions are inverse and affine.  Evaluation on the block units
is weak-* continuous, so compactness gives a homeomorphism.
The assertions about central measures and extremality follow from
\Cref{prop:harmonic-central-measure}.
\end{proof}

Apply the tracial GNS construction from
\Cref{subsec:cstar-preliminaries} to $\mathcal A_B$ and
$\tau\in T(\mathcal A_B)$, using the notation
$M_\tau$ and $\widetilde\tau$ from \eqref{eq:gns-normal-trace}.

\begin{proposition}\label{prop:tracial-gns-factor}
For $\tau\in T(\mathcal A_B)$, the algebra $M_\tau$ is hyperfinite and
\[
 \tau\in\operatorname{ext}T(\mathcal A_B)
 \quad\Longleftrightarrow\quad M_\tau\text{ is a factor}.
\]
If $\tau$ is extreme and $(B,\alpha^\tau)$ is
$\alpha^\tau$-aperiodic, then $M_\tau\cong\mathcal R$, where
$\mathcal R$ is the hyperfinite $\mathrm{II}_1$ factor.
\end{proposition}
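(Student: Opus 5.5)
Hyperfiniteness is immediate from the construction. The finite-dimensional $*$-subalgebras $\pi_\tau(\phi_{\infty,n}(A_n))$ increase, and their union is norm dense in $\pi_\tau(\mathcal A_B)$. It is therefore weak operator dense in $M_\tau=\pi_\tau(\mathcal A_B)''$ by the bicommutant theorem. Thus $M_\tau$ is the weak closure of an increasing union of finite-dimensional $*$-subalgebras. The predual of $M_\tau$ is separable because $H_\tau$ is separable, and $\mathcal A_B$ is separable.

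For the factor criterion, I would use the standard correspondence between normal traces on $M_\tau$ and traces on $\mathcal A_B$ dominated by $\tau$.

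First suppose $M_\tau$ is not a factor. Pick a central projection $z\neq0,1$. Faithfulness of $\widetilde\tau$ gives $0<\widetilde\tau(z)<1$. Then
\[
 \tau=\widetilde\tau(z)\,\tau_1+\widetilde\tau(1-z)\,\tau_2,
 \qquad
 \tau_1(a)=\widetilde\tau(z\pi_\tau(a))/\widetilde\tau(z),
 \quad
 \tau_2(a)=\widetilde\tau((1-z)\pi_\tau(a))/\widetilde\tau(1-z).
\]
Both $\tau_1$ and $\tau_2$ are tracial states because $z$ is central. They differ from $\tau$: otherwise $\widetilde\tau(z\,\cdot)=\widetilde\tau(z)\widetilde\tau(\cdot)$ on a dense subalgebra, hence on $M_\tau$ by normality. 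Evaluating at $1-z$ then gives $0=\widetilde\tau(z)(1-\widetilde\tau(z))$, a contradiction. So $\tau$ is not extreme.

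Conversely, suppose $\tau=t\tau_1+(1-t)\tau_2$ with $0<t<1$. Then $t\tau_1\le\tau$. By the Radon--Nikodym argument in the GNS space, $\tau_1(a)=\langle \pi_\tau(a)h\xi_\tau,\xi_\tau\rangle$ for some $0\le h\le t^{-1}$ in $\pi_\tau(\mathcal A_B)'$. Traciality of $\tau_1$ forces $h$ to implement a normal trace on $M_\tau$. Concretely, I would define the normal functional $\omega(x)=\langle xh\xi_\tau,\xi_\tau\rangle$ on $M_\tau$. It is tracial on a weakly dense subalgebra, so it is tracial on $M_\tau$. Since $\widetilde\tau$ is a faithful normal trace, $\omega=\widetilde\tau(k\,\cdot)$ for some positive $k$ in the centre of $M_\tau$. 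If $M_\tau$ is a factor, $k$ is scalar, so $\tau_1=\tau$. This gives extremality. The main care needed here is the identification of normal traces on a finite von Neumann algebra with faithful trace as $\widetilde\tau(k\,\cdot)$ with $k$ central; I would cite this standard fact.

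For the last assertion, $M_\tau$ is a hyperfinite factor with faithful normal tracial state. It remains to see that it is infinite-dimensional, hence type $\mathrm{II}_1$, and then apply Murray--von Neumann uniqueness. By \Cref{prop:trace-harmonic}, $\tau$ is given by \eqref{eq:finite-trace-weights} with $\alpha=\alpha^\tau$. For $v\in V_n$, a minimal projection in the $v$-block has image $q$ with $\widetilde\tau(\pi_\tau(q))=\alpha_n(v)/p_n(v)$. The $\alpha$-aperiodicity condition implies that for every $L$ and large $n$ some vertex with $\alpha_n(v)>0$ has $p_n(v)\ge L$. In fact, for each $L$ the mass on positive-weight vertices with $p_n(v)\ge L$ tends to one. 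Hence the block unit $\pi_\tau(e_{n,v})$ is a sum of $p_n(v)$ orthogonal, equivalent, nonzero projections. The nonzero claim uses faithfulness and positive trace.

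Consequently $M_\tau$ contains arbitrarily many mutually orthogonal nonzero projections, so it is infinite-dimensional. A finite factor that is infinite-dimensional is of type $\mathrm{II}_1$. Murray--von Neumann \cite{MurrayVonNeumann1943} then gives $M_\tau\cong\mathcal R$.
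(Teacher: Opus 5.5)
Your proposal is correct. Hyperfiniteness, the direction ``not a factor $\Rightarrow$ not extreme'', and the identification with $\mathcal R$ follow the paper's proof closely. In the second of these you spell out the distinctness argument (evaluating at $1-z$) that the paper only indicates. In the third you exclude matrix factors by counting mutually orthogonal nonzero projections, where the paper exhibits projections of arbitrarily small positive trace; the two are equivalent.

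The genuine difference is in ``factor $\Rightarrow$ extreme''. You run the general $C^*$-algebraic argument. The GNS Radon--Nikodym theorem gives a density $h\in\pi_\tau(\mathcal A_B)'$. You then move it into the centre of $M_\tau$ via the standard description of normal traces relative to a faithful normal trace. Your functional $\omega$ is indeed normal, being the vector functional of $h^{1/2}\xi_\tau$, and $\omega\le t^{-1}\widetilde\tau$, so the resulting density is bounded.

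The paper instead uses the AF structure to write the density down directly. It sets $h_n=\sum_{\alpha_n(v)>0}\alpha_n^{\tau_1}(v)\alpha_n(v)^{-1}\pi_\tau(e_{n,v})$. This element is central in $\pi_\tau(A_n)$, commutes with $\pi_\tau(A_m)$ for $n\ge m$, and satisfies $\widetilde\tau(h_n\pi_\tau(a))=\tau_1(a)$ there. An ultraweak cluster point of the $h_n$ then lies in $M_\tau\cap\pi_\tau(A(B,\mathbb C))'=Z(M_\tau)$ automatically.

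Your route is shorter and proves the equivalence for every unital $C^*$-algebra with a tracial state, at the cost of citing two standard theorems. The paper's route is self-contained and never passes through the commutant, so no commutant-to-algebra transfer is needed. It also exhibits the Radon--Nikodym derivative concretely as a limit of ratios of block weights, which ties it to the harmonic-function picture used elsewhere in the paper.
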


\begin{proof}
The finite-dimensional algebras $\pi_\tau(A_n)$ have weakly dense
union, so $M_\tau$ is hyperfinite.  Write $\alpha=\alpha^\tau$.
A central projection $z$ with $0<\widetilde\tau(z)<1$
decomposes $\tau$ into the distinct normalized traces obtained from
$z$ and $1-z$; distinctness follows from weak density of
$\pi_\tau(A(B,\mathbb C))$.
Conversely, if $\tau=t\tau_1+(1-t)\tau_2$ with $0<t<1$, set
\[
 h_n=\sum_{\alpha_n(v)>0}
 \frac{\alpha_n^{\tau_1}(v)}{\alpha_n(v)}\pi_\tau(e_{n,v}),
 \qquad 0\leq h_n\leq t^{-1}1.
\]
For $a\in A_m$ and $n\geq m$, harmonicity and the finite trace formula give
\[
 [h_n,\pi_\tau(a)]=0,\qquad
 \widetilde\tau(h_n\pi_\tau(a))=\tau_1(a).
\]
An ultraweak cluster point $h$ therefore lies in $Z(M_\tau)$ and
satisfies the same trace identity.  If $M_\tau$ is a factor, then
$h=\widetilde\tau(h)1=1$, so $\tau_1=\tau$ by norm density.

If $\tau$ is extreme, $M_\tau$ is a finite factor with separable
predual.  Suppose also that $(B,\alpha^\tau)$ is
$\alpha^\tau$-aperiodic.
For every $L$, aperiodicity gives a positive-weight block with
$p_n(v)\geq L$.  A minimal projection $e$ in that block has
\[
 0<\widetilde\tau(\pi_\tau(e))=\frac{\alpha_n(v)}{p_n(v)}\leq L^{-1}.
\]
These arbitrarily small positive projection traces exclude matrix
factors.  Separability and uniqueness of the hyperfinite $\mathrm{II}_1$ factor give $M_\tau\cong\mathcal R$
\cite{MurrayVonNeumann1943}.
\end{proof}

\begin{example}\label{ex:two-branch-gns}
For the two-branch diagram in \Cref{ex:positive-not-compact},
$\mathcal A_B$ is the direct sum of two CAR algebras.
Its traces are $\tau_t=t\tau_{\mathrm{CAR}}\oplus(1-t)\tau_{\mathrm{CAR}}$,
$0\leq t\leq1$, where $\tau_{\mathrm{CAR}}$ is the unique trace of the CAR algebra,
as follows from \Cref{prop:trace-harmonic}.
For $0<t<1$, the GNS representation acts on both summands, so
\[
 M_{\tau_t}\cong\mathcal R\oplus\mathcal R,\qquad
 \widetilde\tau_t(x,y)=t\tau_{\mathcal R}(x)+(1-t)\tau_{\mathcal R}(y),
\]
where $\tau_{\mathcal R}$ is the normalized trace on $\mathcal R$.
At $t=0$ or $t=1$, the zero-weight summand is removed, leaving
$M_{\tau_t}\cong\mathcal R$.
Although the original algebra has two nonzero proper closed ideals,
either endpoint trace annihilates one summand and gives a factor
as its GNS closure.
\end{example}

\begin{example}\label{ex:pascal-gns}
For the Pascal diagram in \Cref{ex:pascal-harmonic}, the traces
$\tau_t$ corresponding to $\alpha^t$ satisfy
\[
 \tau_t(e_{1,1})=t,\qquad M_{\tau_t}\cong\mathcal R
 \qquad(0<t<1)
\]
by \Cref{prop:tracial-gns-factor}.  Hence distinct traces on the same
$C^*$-algebra give abstractly isomorphic von Neumann algebras.
At $t=0,1$, the GNS representation is supported on the single path
of weight one, so its image and weak operator closure are $\mathbb C$.
\end{example}

\subsection{Rank completions and their comparison}
\label{subsec:completion-mechanism}

Let $M$ be a finite von Neumann algebra with faithful normal tracial
state $\tau$.
A closed densely defined operator $x$ is \emph{affiliated} with $M$
if $uxu^*=x$ for every unitary $u\in M'$.
These operators form a ring $\mathcal U(M)$, with sums and products
obtained by closing the corresponding operator sums and products.
Its matrix rank is
\[
 \rk_\tau(X)=(\operatorname{Tr}_r\otimes\tau)(p_X),
 \qquad X\in\Mat_{r\times s}(\mathcal U(M)),
\]
where $p_X\in\Mat_r(M)$ is the range projection.  For a positive
affiliated operator $a$, write $s(a)=\mathbf 1_{(0,\infty)}(a)$; thus
$p_X=s(XX^*)$.

The ring $\mathcal U(M)$ is complete in the rank metric
\cite[Lemma~2.2]{Thom2008}.  Spectral truncation gives
\[
 x\in\mathcal U(M),\quad e_k=\mathbf 1_{[0,k]}(|x|)
 \quad\Longrightarrow\quad
 xe_k\in M,\qquad \rk_\tau(x-xe_k)\leq\tau(1-e_k)\longrightarrow0.
\]
Indeed, $\|xe_k\|\leq k$, the right support of $x-xe_k$ is dominated
by $1-e_k$, and normality gives $\tau(1-e_k)\to0$.
Thus $\mathcal U(M)$ is the rank completion of $M$.

Ore localization provides an algebraic description of the same
ring $\mathcal U(M)$ by adjoining inverses to the non-zero-divisors
of $M$.

\begin{remark}[Ore localization]\label{rem:ore-localization}
Let
\[
 \Sigma_M=\Set*{a\in M\given s(a^*a)=s(aa^*)=1}
\]
be the set of elements that are neither left nor right zero divisors.
It satisfies the left and right Ore conditions, and
\begin{equation}\label{eq:affiliated-ore-localization}
 \mathcal U(M)\cong M\Sigma_M^{-1}
 \cong\Sigma_M^{-1}M
\end{equation}
\cite[Proposition~2.2]{Vas2005}.
Every $x\in\mathcal U(M)$ has the fraction representation
\[
 b=(1+|x|)^{-1}\in\Sigma_M,\qquad
 a=xb\in M,\qquad x=ab^{-1}.
\]
\end{remark}

\begin{proposition}
\label{prop:cstar-rank-affiliated}
Let $\mathcal A$ be a unital $C^*$-algebra with tracial state $\tau$,
and use the GNS notation from \eqref{eq:gns-normal-trace}.
The representation $\pi_\tau$ induces a rank-preserving unital
$*$-isomorphism
\begin{equation}\label{eq:cstar-rank-affiliated}
 \overline{\mathcal A}^{\,\rk_\tau}\cong\mathcal U(M_\tau).
\end{equation}
\end{proposition}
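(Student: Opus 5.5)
We prove that the rank $\rk_\tau$ on $\mathcal A$ is exactly the pullback of the affiliated-operator rank along $\pi_\tau$. The rank completion of $\mathcal A$ then embeds isometrically into the complete ring $\mathcal U(M_\tau)$, and it remains to show that the image is dense.

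\emph{Step 1: rank identity.} For $X\in\Mat_{r\times s}(\mathcal A)$ we have $\pi_\tau(|X|^{1/k})=|\pi_\tau(X)|^{1/k}$, because continuous functional calculus commutes with $*$-homomorphisms. Also, on $\Mat_s(M_\tau)$ the functional $\widetilde\tau_s:=\operatorname{Tr}_s\otimes\widetilde\tau$ satisfies $\widetilde\tau_s(\pi_\tau(\cdot))=\tau_s$. The operators $|\pi_\tau(X)|^{1/k}$ increase strongly to the support projection $s(\pi_\tau(X)^*\pi_\tau(X))$. Normality of $\widetilde\tau$ therefore gives
\[
 \rk_\tau(X)=\widetilde\tau_s\bigl(s(\pi_\tau(X)^*\pi_\tau(X))\bigr).
\]
The right side is the trace of the right support of $\pi_\tau(X)$. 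By the trace property (polar decomposition), this equals the trace of the range projection $p_{\pi_\tau(X)}$, which is $\rk_{\widetilde\tau}(\pi_\tau(X))$. Hence $\pi_\tau$ is rank-preserving from $(\mathcal A,\rk_\tau)$ to $(\mathcal U(M_\tau),\rk_{\widetilde\tau})$. It induces an isometric unital $*$-homomorphism from $\mathcal A/\ker\rk_\tau$ into $\mathcal U(M_\tau)$, and by completeness \cite[Lemma~2.2]{Thom2008} this extends to $\overline{\mathcal A}^{\,\rk_\tau}$. Matrix ranks are preserved by the continuity bounds \eqref{eq:rank-continuity-bounds}. The involution is compatible because $\rk(x^*)=\rk(x)$ on both sides: right and left supports have equal trace. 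Consequently the involution is isometric, and it extends to the completion.

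\emph{Step 2: density of $\pi_\tau(\mathcal A)$ in $\mathcal U(M_\tau)$.} Spectral truncation shows that $M_\tau$ is rank-dense in $\mathcal U(M_\tau)$, so it suffices to approximate $x\in M_\tau$ in rank by elements of $\pi_\tau(\mathcal A)$. This is the main obstacle, since weak density does not directly control rank. We would use the Kaplansky density theorem to get a bounded net $a_i\in\pi_\tau(\mathcal A)$ with $\|a_i\|\le\|x\|$ and $a_i\to x$ strongly $*$. Since $\xi_\tau$ is separating and $M_\tau$ is separable in the relevant sense, we may pass to a sequence. Then $\widetilde\tau(|x-a_i|^2)=\|(x-a_i)\xi_\tau\|^2\to0$.

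We next need that $L^2$-smallness forces rank-smallness. For $y\in M_\tau$ and $\varepsilon>0$, let $f=\mathbf 1_{(\varepsilon,\infty)}(|y|)$. Chebyshev's inequality gives $\widetilde\tau(f)\le\varepsilon^{-2}\widetilde\tau(|y|^2)$, and $\|y(1-f)\|\le\varepsilon$. This only shows that $y$ is operator-norm small off a projection of small trace, not that $y$ has small rank. The remedy uses the ring structure of $\mathcal U(M_\tau)$. The element $b=1-y(1-f)$ is invertible in $M_\tau$ when $\varepsilon<1$, so $1-y=b-yf$ differs from the invertible $b$ by an element of rank at most $\widetilde\tau(f)$. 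Apply this with $y=1-a_ix^{-1}$ when $x$ is invertible in $M_\tau$. Then $a_ix^{-1}=b-yf$ has rank distance at most $\widetilde\tau(f)$ from $b$, which is invertible, so $a_i$ is close in rank to $bx$. This reduction is awkward. A cleaner route is to show that the rank closure $\overline{\pi_\tau(\mathcal A)}$ is a rank-closed $*$-subring of $\mathcal U(M_\tau)$ that contains the spectral projections of its self-adjoint elements and all norm limits. Spectral projections $\mathbf 1_{(t,\infty)}(h)$ of self-adjoint $h\in\pi_\tau(\mathcal A)$ are rank limits of continuous functions of $h$, by normality of $\widetilde\tau$ outside finitely many atoms of the spectral measure. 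Hence the closure contains every projection of $M_\tau$ that is a strong limit of such projections. From this it would follow that the closure contains the von Neumann algebra generated, using that a rank-closed regular subring containing all projections of $M_\tau$ contains $M_\tau$ (each $x\in M_\tau$ is a rank limit of finite spectral sums built from the polar decomposition). We expect that establishing ``strong convergence of bounded sequences implies rank convergence after a correction'' is the crux.

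\emph{Step 3: conclusion.} The image of the completion is closed, since it is isometric from a complete space, and dense by Step 2. It is therefore all of $\mathcal U(M_\tau)$, which yields \eqref{eq:cstar-rank-affiliated} as a rank-preserving unital $*$-isomorphism.
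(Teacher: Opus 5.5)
\paragraph{The gap: Step~2.}
Your Step~1 is the paper's argument, with one small correction. The operators $|\pi_\tau(X)|^{1/k}$ increase only when $\|X\|\le1$; in general, use bounded convergence of $t^{1/k}\to\mathbf 1_{(0,\infty)}(t)$ on $[0,\|X\|]$ together with normality of $\widetilde\tau$.

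The real gap is Step~2. Rank density of $\pi_\tau(\mathcal A)$ in $M_\tau$ is never proved, and the routes you sketch fail.
\begin{itemize}
\item Your first remedy shows only that $a_i$ is rank-close to $b_ix$. Since $b_i-1=-y(1-f)$ is merely norm-small, $\rk_{\widetilde\tau}(b_ix-x)$ can equal $1$, so nothing about $x$ follows.
\item The ``cleaner route'' claims that the closure contains strong limits of projections. This is false: strong, even norm, convergence of projections does not imply rank convergence. Two rank-one projections in $\Mat_2(\mathbb C)$ at angle $\theta\neq0$ differ by a self-adjoint matrix with eigenvalues $\pm\sin\theta$, which is invertible.
\item It also claims that finite spectral sums rank-approximate every element. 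They do not when the spectral distribution is diffuse. Take $\mathcal A=C[0,1]$ with the Lebesgue trace, so $M_\tau=L^\infty[0,1]$. The function $t\mapsto t$ agrees with every countably-valued function only on a null set. Hence it lies at rank distance $1$ from the rank closure of the simple functions, which is a rank-closed regular subring containing all projections.
\end{itemize}
More fundamentally, Step~2 never uses that $\pi_\tau(\mathcal A)$ is norm closed. Strong density plus $L^2$ estimates yield only convergence in measure. Such an argument would apply equally to the strongly dense subalgebra $\pi_\tau(A(B,\mathbb C))$, whose rank closure can be proper, as the paper's example $x=\sum_k2^{-k}e_k$ in the CAR algebra shows.

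\paragraph{The missing ingredient.}
What is needed is approximation with \emph{exact} agreement on a large projection. This is Sait\^o's noncommutative Lusin theorem, which the paper invokes. For $x\in M_\tau$ and $\varepsilon>0$ it gives $a\in\pi_\tau(\mathcal A)$ and a projection $e$ with $(x-a)e=0$ and $\widetilde\tau(1-e)<\varepsilon$. The right support of $x-a=(x-a)(1-e)$ is dominated by $1-e$, so $\rk_{\widetilde\tau}(x-a)<\varepsilon$.

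You can derive this by iterating your own Kaplansky--Chebyshev step.
\begin{enumerate}
\item Normalize $\|x\|\le1$ and start with $s_0=0$, $e_0=1$.
\item Suppose $s_k\in\pi_\tau(\mathcal A)$ and a projection $e_k$ satisfy $\|(x-s_k)e_k\|\le2^{-k}$. Put $z_k=(x-s_k)e_k$.
\item By Kaplansky, choose $y_{k+1}\in\pi_\tau(\mathcal A)$ with $\|y_{k+1}\|\le\|z_k\|\le2^{-k}$ and $\|(z_k-y_{k+1})\xi_\tau\|$ as small as needed.
\item Chebyshev then gives a projection $f$ with $\widetilde\tau(1-f)\le2^{-k-2}\varepsilon$ and $\|(z_k-y_{k+1})f\|\le2^{-k-1}$.
\item Set $s_{k+1}=s_k+y_{k+1}$ and $e_{k+1}=e_k\wedge f$. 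Then $(x-s_{k+1})e_{k+1}=(z_k-y_{k+1})fe_{k+1}$ has norm at most $2^{-k-1}$.
\end{enumerate}
The partial sums $s_k$ converge in norm to some $a\in\pi_\tau(\mathcal A)$, because the range of a $*$-homomorphism of $C^*$-algebras is norm closed. Put $e=\bigwedge_ke_k$. Then $\widetilde\tau(1-e)\le\varepsilon/2$ and $(x-a)e=0$. With this lemma in place, your Step~3 goes through as written.
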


\begin{proof}
For $X\in\Mat_{r\times s}(\mathcal A)$, spectral bounded convergence
in \eqref{eq:trace-root-rank} and polar decomposition give
\begin{equation}\label{eq:trace-support-rank}
 \rk_\tau(X)=\rk_{\widetilde\tau}(\pi_\tau(X))
 =(\operatorname{Tr}_r\otimes\widetilde\tau)(p_{\pi_\tau(X)}).
\end{equation}
Indeed, $|\pi_\tau(X)|^{1/k}$ converges strongly to its support,
and the initial and final projections in the polar decomposition of
$\pi_\tau(X)$ have equal traces.
Faithfulness of $\widetilde\tau$ implies
$\ker(\rk_\tau)=\ker\pi_\tau$.
Thus the rank-zero quotient of $\mathcal A$ identifies isometrically
with $\pi_\tau(\mathcal A)$.
For $x\in M_\tau$ and $\varepsilon>0$, the noncommutative Lusin
theorem of Sait\^o \cite[Theorem~2]{Saito1967}, applied with normal functional
$\widetilde\tau$ and projection $1$, gives $a\in\pi_\tau(\mathcal A)$
and a projection $e\in M_\tau$ such that
\[
 (x-a)e=0,\qquad \widetilde\tau(1-e)<\varepsilon.
\]
The right support of $x-a=(x-a)(1-e)$ is dominated by $1-e$, whence
\begin{equation}\label{eq:lusin-rank-estimate}
 \rk_{\widetilde\tau}(x-a)\leq\widetilde\tau(1-e)<\varepsilon.
\end{equation}
Together with spectral truncation, this proves rank density in
$\mathcal U(M_\tau)$.  The isometric embedding therefore extends onto
this complete ring, giving \eqref{eq:cstar-rank-affiliated}.
Rectangular ranks extend by continuity, and the involution extends
isometrically because $\rk_{\widetilde\tau}(x^*)=\rk_{\widetilde\tau}(x)$.
\end{proof}

We now apply the preceding proposition to $\mathcal A_B$ with
the trace $\tau=\tau_\alpha$ associated with $\alpha\in\mathcal H(B)$.
Write $Q_\alpha=\overline A_\alpha(B,\mathbb C)$ for the rank
completion of the algebraic direct limit.

\begin{corollary}\label{prop:trace-induced-rank}
The trace-induced rank restricts to the weighted rank:
\begin{equation}\label{eq:trace-rank-finite-stage}
 \rk_\tau(X)=\sum_{v\in V_n}\frac{\alpha_n(v)}{p_n(v)}
                  \operatorname{rank}_{\mathbb C}(X_v)
 =\rk_\alpha(X),\qquad X\in\Mat_{r\times s}(A_n).
\end{equation}
The restriction of $\pi_\tau$ to $A(B,\mathbb C)$ extends to a
rank-preserving unital $*$-embedding
\begin{equation}\label{eq:core-affiliated-embedding}
 Q_\alpha\lhook\joinrel\longrightarrow\mathcal U(M_\tau),
\end{equation}
whose image is the rank closure of $\pi_\tau(A(B,\mathbb C))$.
\end{corollary}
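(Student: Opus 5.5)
The proof has two parts. First we compute the trace-induced rank on finite-stage matrices. Then we transport the rank-preserving map $\pi_\tau$ to the completions.

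\textbf{Step 1: the finite-stage formula.} Fix $X\in\Mat_{r\times s}(A_n)$. The $C^*$-algebra $\Mat_s(A_n)$ is finite dimensional, so $|X|$ lies in it and $|X|^{1/k}$ converges in norm, blockwise, to the support projection of $|X|$. In the $v$-block this support projection is $s(|X_v|)$, whose ordinary matrix trace equals $\operatorname{rank}_{\mathbb C}(X_v)$.

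The finite trace formula \eqref{eq:finite-trace-weights} extends to matrices as $\tau_s=\sum_v\alpha_n(v)\operatorname{tr}_{p_n(v)}\circ\operatorname{Tr}_s$ on $\Mat_s(A_n)$. Applying it in \eqref{eq:trace-root-rank} gives
\[
 \rk_\tau(X)=\sum_v\frac{\alpha_n(v)}{p_n(v)}\operatorname{Tr}\bigl(s(|X_v|)\bigr)
 =\sum_v\frac{\alpha_n(v)}{p_n(v)}\operatorname{rank}_{\mathbb C}(X_v).
\]
This is \eqref{eq:weighted-matrix-rank} with $\rk=\operatorname{rank}_{\mathbb C}$, so it equals $\rk_\alpha(X)$. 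Because the formula holds at every stage, it is compatible with the identification of $A(B,\mathbb C)$ as a union of the stages $A_n$.

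\textbf{Step 2: extending to the completion.} Combining Step 1 with \eqref{eq:trace-support-rank} shows that, on $A(B,\mathbb C)$,
\[
 \rk_\alpha(X)=\rk_{\widetilde\tau}\bigl(\pi_\tau(X)\bigr)
\]
for every rectangular $X$. So $\pi_\tau|_{A(B,\mathbb C)}$ is a rank-preserving unital $*$-homomorphism into $\mathcal U(M_\tau)$. Since $\widetilde\tau$ is faithful, its kernel is exactly $\ker\rk_\alpha$, and the induced map on $A(B,\mathbb C)/\ker\rk_\alpha$ is an isometry for the rank metrics.

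The ring $\mathcal U(M_\tau)$ is complete in the rank metric \cite[Lemma~2.2]{Thom2008}. By the extension principle for rank-preserving homomorphisms in \Cref{sec:rank-completions}, this isometry extends uniquely to a rank-preserving ring homomorphism $Q_\alpha\to\mathcal U(M_\tau)$. Isometries are injective, so this extension is an embedding.

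\textbf{Step 3: remaining properties.}
\begin{itemize}
\item Unitality holds because $\pi_\tau(1)=1$.
\item The extension preserves the involution: the involution is isometric on both sides, since $\rk(x^*)=\rk(x)$ there (on $A_n$ because $\operatorname{rank}_{\mathbb C}(x_v^*)=\operatorname{rank}_{\mathbb C}(x_v)$), and it is continuous.
\item The image is the rank closure of $\pi_\tau(A(B,\mathbb C))$. It is closed because it is the isometric image of a complete space, and it contains $\pi_\tau(A(B,\mathbb C))$ as a dense subset.
\end{itemize}

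The only step requiring care is Step 1: checking that the limit defining $\rk_\tau$ reduces to support projections blockwise. In finite dimensions this is immediate from functional calculus, so there is no serious obstacle.
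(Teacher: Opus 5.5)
Your proposal is correct and follows the paper's proof. You compute $\rk_\tau$ blockwise from the finite trace formula using $\lim_k\operatorname{Tr}(|X_v|^{1/k})=\operatorname{rank}_{\mathbb C}(X_v)$, invoke \eqref{eq:trace-support-rank} to see that $\pi_\tau$ is rank-preserving, and extend the induced isometry on the rank-zero quotient to $Q_\alpha$; your explicit remarks on completeness of $\mathcal U(M_\tau)$ and on compatibility with the involution simply fill in details the paper leaves implicit.
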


\begin{proof}
Apply \eqref{eq:trace-root-rank} blockwise in
\eqref{eq:finite-trace-weights}.  Since
\[
 \lim_{k\to\infty}\operatorname{Tr}(|X_v|^{1/k})
 =\operatorname{rank}_{\mathbb C}(X_v),
\]
we obtain \eqref{eq:trace-rank-finite-stage}.
Together with \Cref{prop:cstar-rank-affiliated}, this shows that
$\pi_\tau$ preserves the weighted matrix rank.
It therefore induces an isometry
on the rank-zero quotient, which extends to its completion.
The image of this extension is the stated rank closure.
\end{proof}

The constructions give the commuting diagram
\begin{equation}\label{eq:two-completion-chains}
 \begin{tikzpicture}[baseline=(current bounding box.center),
   >=Stealth, every node/.style={inner sep=3pt}]
  \node (alg) at (0,1.9) {$A(B,\mathbb C)$};
  \node (af) at (3.8,1.9) {$\mathcal A_B$};
  \node (vn) at (7.8,1.9) {$M_\tau$};
  \node (rank) at (0,0) {$Q_\alpha$};
  \node (aff) at (7.8,0) {$\mathcal U(M_\tau)$};
  \draw[->] (alg) -- node[above] {$\|\cdot\|$} (af);
  \draw[->] (af) -- node[above] {$\pi_\tau,\ \mathrm{WOT}$} (vn);
  \draw[->] (alg) -- node[left] {$\rk_\alpha$} (rank);
  \draw[->] (af) -- node[above,sloped] {$\rk_\tau$} (aff);
  \draw[->] (vn) -- node[right] {$\rk_{\widetilde\tau}$} (aff);
  \draw[->] (rank) -- node[below] {$\mathrm{inclusion}$} (aff);
 \end{tikzpicture}
\end{equation}
The arrows are the canonical homomorphisms.  The norm and rank labels
specify the corresponding completions, while $\pi_\tau(\mathcal A_B)$
is weak operator dense in $M_\tau$.
Identifying $Q_\alpha$ with its image under
\eqref{eq:core-affiliated-embedding}, the rank closures inside
$\mathcal U(M_\tau)$ satisfy
\begin{equation}\label{eq:rank-closure-comparison}
 \begin{aligned}
 \overline{\pi_\tau(A(B,\mathbb C))}^{\,\rk_{\widetilde\tau}}
 &=Q_\alpha\subseteq\mathcal U(M_\tau),\\
 \overline{\pi_\tau(\mathcal A_B)}^{\,\rk_{\widetilde\tau}}
 &=\overline{M_\tau}^{\,\rk_{\widetilde\tau}}
 =\mathcal U(M_\tau).
 \end{aligned}
\end{equation}
\begin{example}
\label{ex:proper-core-rank-closure}
The inclusion in \eqref{eq:rank-closure-comparison} can be proper.
Let $B=B_2$ be the diagram from \Cref{ex:uhf-classification},
with unique harmonic function $\alpha$.  Use the tensor-product presentation
\[
 A_n=\bigotimes_{k=1}^n\Mat_2(\mathbb C),\qquad
 A(B,\mathbb C)=\bigcup_n A_n,\qquad
 \mathcal A_B=\bigotimes_{k\geq1}\Mat_2(\mathbb C).
\]
The maps $a\mapsto a\otimes1_2$ and $a\mapsto1_2\otimes a$ are
conjugate by permutations.
The recursive conjugacy in
\Cref{subsec:associated-algebra} identifies their direct systems,
preserving the normalized ranks and traces.
Let $\tau$ be the faithful product trace and identify $\mathcal A_B$
with its GNS image in $M_\tau\cong\mathcal R$.

Let $e_k$ be $\diag(0,1)$ in the $k$th tensor factor and the identity
elsewhere.  Put
\[
 x=\sum_{k\geq1}2^{-k}e_k\in\mathcal A_B,\qquad
 x_n=\sum_{k=1}^n2^{-k}e_k\in A_n,\qquad
 y_n=x-x_n=\sum_{k>n}2^{-k}e_k.
\]
The series converges in norm, with $\|x-x_n\|\leq2^{-n}$.
For a self-adjoint element $z\in M_\tau$, its \emph{spectral
 distribution} with respect to $\widetilde\tau$ is the probability
measure
\[
 \mu_z(E)=\widetilde\tau\bigl(\mathbf 1_E(z)\bigr)
 \qquad(E\subseteq\mathbb R\text{ Borel}),
\]
where $\mathbf 1_E(z)$ is the spectral projection of $z$ associated
with $E$.
The element $y_n$ belongs to the remaining tensor factors and commutes
with $A_n$.  Its spectral distribution $\mu_{y_n}$ is nonatomic:
the binary coordinates are independent with equal probabilities,
each sequence has measure zero, and each real number has at most
two binary expansions.
For $a\in A_n$, the tensor decomposition and the spectral theorem for
$y_n$ represent $x-a$ by the matrix-valued function
\[
 t\longmapsto(x_n-a)+t1_{2^n}.
\]
The trace on this matrix-valued representation is
$\operatorname{tr}_{2^n}\otimes\mu_{y_n}$.  Consequently,
\[
 \rk_\tau(x-a)=\int_{\mathbb R}
 2^{-n}\operatorname{rank}_{\mathbb C}
 \bigl((x_n-a)+t1_{2^n}\bigr)\,d\mu_{y_n}(t).
\]
The determinant is a monic polynomial of degree $2^n$, so it vanishes
at only finitely many points.  Since $\mu_{y_n}$ is nonatomic,
the matrix is invertible almost everywhere.  The integrand is
therefore $1$ almost everywhere, giving $\rk_\tau(x-a)=1$.
This argument applies to every $a\in A_n$, without a self-adjointness
assumption.
As every element of $A(B,\mathbb C)$ lies in some $A_n$,
\begin{equation}\label{eq:proper-core-distance}
 \inf_{a\in A(B,\mathbb C)}\rk_\tau(x-a)=1.
\end{equation}
Although $x_n\to x$ in norm, the element
$x\in\mathcal A_B\subseteq\mathcal U(M_\tau)$ does not belong
to $Q_\alpha$, proving that the canonical embedding in
\Cref{prop:trace-induced-rank} is not surjective for this diagram.

The same binary-coordinate argument shows that $\mu_x$ has no atom
at zero.  Thus the kernel projection of $x$ is zero, and spectral
calculus gives an inverse $x^{-1}$ in $\mathcal U(M_\tau)$.
On the event that its first $m$ binary coordinates vanish,
$0<x\leq2^{-m}$ almost everywhere, and this event has trace $2^{-m}$.
Hence $x^{-1}$ is unbounded.  A bounded $y$ with $xyx=x$ would satisfy
$y=x^{-1}$, a contradiction.  Thus $M_\tau$ and $\mathcal A_B$ are
not regular as rings.
\end{example}

Returning to the general setting, $\mathcal U(M_\tau)$ is
$*$-regular: if $x=u|x|$, then
\[
 x^\dagger=g(|x|)u^*,\qquad
 g(0)=0,\quad g(t)=t^{-1}\ (t>0),\qquad xx^\dagger x=x.
\]
Here $*$-regularity means von Neumann regularity with a proper
involution: $x^*x=0$ implies $x=0$.
The element $x^\dagger$ is the \emph{relative inverse} of $x$,
uniquely characterized by
\[
 xx^\dagger x=x,\qquad x^\dagger xx^\dagger=x^\dagger,
 \qquad (xx^\dagger)^*=xx^\dagger,\qquad
 (x^\dagger x)^*=x^\dagger x.
\]
For $X\in\Mat_{r\times s}(\mathcal U(M_\tau))$, the same construction
gives $XX^\dagger=p_X$, and hence
$X\mathcal U(M_\tau)^{s\times1}=p_X\mathcal U(M_\tau)^{r\times1}$.
Hence the projective-module dimension from
\Cref{rem:pseudo-rank-dimension} satisfies
\begin{equation}\label{eq:affiliated-rank-dimension}
 \dim_{\widetilde\tau}\bigl(X\mathcal U(M_\tau)^{s\times1}\bigr)
 =(\operatorname{Tr}_r\otimes\widetilde\tau)(p_X)=\rk_{\widetilde\tau}(X).
\end{equation}

\begin{proposition}\label{prop:regular-simple-comparison}
The ring $Q_\alpha$ is $*$-regular, and
\[
 \begin{aligned}
 \alpha\text{ extreme}
 &\iff\tau\text{ extreme}\iff M_\tau\text{ a factor}\\
 &\iff Q_\alpha\text{ simple}
 \iff\mathcal U(M_\tau)\text{ simple}.
 \end{aligned}
\]
Here simplicity concerns all two-sided ideals.
\end{proposition}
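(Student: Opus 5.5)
The argument splits into three parts: $*$-regularity of $Q_\alpha$, the chain of equivalences among the first three conditions, and the link between these and the two simplicity conditions.

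\emph{$*$-regularity.} I would show that $Q_\alpha$ is closed under the relative inverse of $\mathcal U(M_\tau)$. Identify $Q_\alpha$ with the rank closure of $\pi_\tau(A(B,\mathbb C))$ via \Cref{prop:trace-induced-rank}. Each $A_n$ is a finite product of complex matrix algebras, so the relative inverse of an element of $A_n$ is its Moore--Penrose inverse, which again lies in $A_n$. The key tool is a continuity estimate: if $\rk(x-y)\leq\varepsilon$, then $\rk(x^\dagger-y^\dagger)\leq C\varepsilon$ for an absolute constant $C$. To prove it, pass to a projection $f$ of trace at least $1-2\varepsilon$ on which $x$ and $y$ agree, using the kernel and range projections of $x-y$. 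Compressing by such projections, $x^\dagger$ and $y^\dagger$ then agree off a projection of trace $O(\varepsilon)$. Given this, rank-Cauchy sequences in $A(B,\mathbb C)$ have rank-Cauchy relative inverses, so $Q_\alpha$ is regular. The involution is proper because it is proper in $\mathcal U(M_\tau)$. I expect this continuity estimate to be the main technical obstacle, and I would check the constant carefully.

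\emph{The first three equivalences.} Extremality of $\alpha$ and of $\tau$ are equivalent by \Cref{prop:trace-harmonic}. Extremality of $\tau$ and factoriality of $M_\tau$ are equivalent by \Cref{prop:tracial-gns-factor}.

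\emph{Factor implies both rings simple.} If $M_\tau$ is a factor, then $\widetilde\tau$ is its unique normal trace, and the rank on $\mathcal U(M_\tau)$ is the unique rank function of a continuous or matrix factor. For a nonzero ideal $I$ of $\mathcal U(M_\tau)$ and $0\neq x\in I$, regularity gives $xx^\dagger\in I$, a nonzero projection. In a finite factor, finitely many copies of a nonzero projection majorize $1$ via partial isometries in $M_\tau$, so $1\in I$. For $Q_\alpha$ I cannot use partial isometries from $M_\tau$ directly, since they need not lie in $Q_\alpha$. Instead I would use that $Q_\alpha$ is regular with an extremal pseudo-rank, which makes its completion simple by \cite[Theorem~19.14]{Goodearl1991}. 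Here extremality of $\rk_\alpha$ among pseudo-ranks of $A(B,\mathbb C)$ corresponds to extremality of $\alpha$ by the parametrization discussed in \Cref{subsec:coefficient-ranks}.

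\emph{Simplicity implies factor.} I would argue by contraposition for both rings. If $M_\tau$ is not a factor, take a central projection $z\in M_\tau$ with $0<\widetilde\tau(z)<1$. Then $z\,\mathcal U(M_\tau)$ is a proper nonzero ideal, which settles $\mathcal U(M_\tau)$. For $Q_\alpha$, I would show that $z$ lies in $Q_\alpha$, or at least that $Q_\alpha$ has a nontrivial central idempotent. The element $h$ built in the proof of \Cref{prop:tracial-gns-factor} is a weak limit of elements $h_n\in A_n$ that lie in the centres of the $A_n$. Because $\tau$ is not extreme, I would use the decomposition $\alpha=t\beta+(1-t)\gamma$ with $\beta\neq\gamma$ harmonic, so that $\rk_\alpha=t\rk_\beta+(1-t)\rk_\gamma$. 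The completion then maps onto the product of the completions for $\rk_\beta$ and $\rk_\gamma$, as in \Cref{ex:two-branch-rank-completions}, giving a nontrivial central idempotent. Alternatively, a regular rank-complete ring is simple only if its rank is extremal, by Goodearl's theory \cite[Theorem~19.6]{Goodearl1991}, and non-extreme $\alpha$ yields a non-extremal rank. I would use this Goodearl route, with $Q_\alpha$ complete regular and the rank $\rk_\alpha$ on $Q_\alpha$ not extremal.
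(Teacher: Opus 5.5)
Your proposal is correct. Apart from the $*$-regularity step, it follows the paper's proof.

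\textbf{What matches the paper.}
\begin{itemize}
\item The paper also chains \Cref{prop:trace-harmonic} and \Cref{prop:tracial-gns-factor} for the first three conditions.
\item It handles $Q_\alpha$ in both directions through the affine parametrization of pseudo-ranks on $A(B,\mathbb C)$ by $\mathcal H(B)$, together with \cite[Theorem~19.14]{Goodearl1991}. That theorem is an equivalence, so the converse you attribute to Theorem~19.6 should cite Theorem~19.14 as well.
\item It treats $\mathcal U(M_\tau)$ by a central projection in one direction, and by range projections plus comparison in a finite factor in the other.
\end{itemize}

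\textbf{Where you differ.} The paper gets regularity of $Q_\alpha$ by citing \cite[Theorem~19.6]{Goodearl1991}: the rank completion of the regular ring $A(B,\mathbb C)$ is regular. It then takes a proper involution from the embedding into $\mathcal U(M_\tau)$. You instead show that $Q_\alpha$ is closed under the relative inverse of $\mathcal U(M_\tau)$, via a Lipschitz bound.

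That bound is true, with $C=3$, and you can avoid the projection bookkeeping. In any ring with involution where Moore--Penrose inverses exist, the four Penrose identities give Wedin's identity
\[
 y^\dagger-x^\dagger
 =-y^\dagger(y-x)x^\dagger
 +y^\dagger y^{\dagger*}(y-x)^*(1-xx^\dagger)
 +(1-y^\dagger y)(y-x)^*x^{\dagger*}x^\dagger .
\]
Each of the three terms contains $y-x$ or $(y-x)^*$ as a factor. The product axiom and $\rk_{\widetilde\tau}(z^*)=\rk_{\widetilde\tau}(z)$ then give $\rk(y^\dagger-x^\dagger)\leq3\rk(y-x)$. Hence $x_n\to x$ in rank implies $x_n^\dagger\to x^\dagger$, and $x^\dagger$ lies in the closed subring $Q_\alpha$.

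\textbf{What each route buys.} Your route is self-contained and proves a little more: $Q_\alpha$ is a $\dagger$-closed subring of $\mathcal U(M_\tau)$ whose relative inverses are those of the ambient ring. The paper's citation is shorter and works for any regular ring with a pseudo-rank, with no ambient $*$-regular ring needed.

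\textbf{Two small points.}
\begin{itemize}
\item You should record that $Q_\alpha$ is $*$-closed. This holds because the involution is rank-isometric on $A(B,\mathbb C)$.
\item The aside that $Q_\alpha$ maps \emph{onto} the product of the $\rk_\beta$- and $\rk_\gamma$-completions is false in general. When $\beta$ and $\gamma$ share an ergodic component, the image is diagonal in that component. You do not rely on it, since you settle on the Goodearl route.
\end{itemize}
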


\begin{proof}
The algebra $A(B,\mathbb C)$ is regular by blockwise generalized
inverses.  Its rank completion remains regular \cite[Theorem~19.6]{Goodearl1991}.
The embedding \eqref{eq:core-affiliated-embedding} gives $Q_\alpha$
a proper involution, so $Q_\alpha$ is $*$-regular.
The equivalence between extremality of $\tau$ and factoriality of
$M_\tau$ is \Cref{prop:tracial-gns-factor}.
By \Cref{prop:trace-harmonic} and the division-ring discussion in
\Cref{subsec:coefficient-ranks}, extremality of
$\tau$, of $\alpha$, and of the scalar pseudo-rank on $A(B,\mathbb C)$ are
equivalent.  The last condition is equivalent to simplicity of its
completion \cite[Theorem~19.14]{Goodearl1991}; see also
\cite[Section~2]{AraClaramunt2018}.

A nontrivial central projection of $M_\tau$ generates a proper ideal
of $\mathcal U(M_\tau)$.  If $M_\tau$ is a factor, a nonzero ideal in
$\mathcal U(M_\tau)$ contains a nonzero range projection $p$, obtained
from an element and its relative inverse.  Trace comparison in a finite
factor gives a finite partition $1=\sum_iq_i$, with each $q_i$ equivalent
to a subprojection of $p$ \cite{MurrayVonNeumann1943}.
Thus every $q_i$ belongs to the ideal, which therefore contains $1$.
This proves the last equivalence.
\end{proof}

\begin{corollary}
\label{cor:parallel-completions}
If $\tau$ is extreme and $(B,\alpha^\tau)$ is $\alpha^\tau$-aperiodic, then
\begin{equation}\label{eq:parallel-completions}
 M_\tau\cong\mathcal R,\qquad Q_{\alpha^\tau}\cong\mathcal M_{\mathbb C},\qquad
 \overline{\mathcal A_B}^{\,\rk_\tau}\cong\mathcal U(\mathcal R).
\end{equation}
The second isomorphism preserves the unital $\mathbb C$-algebra structure
and all rectangular ranks.  Composing its inverse with
\eqref{eq:core-affiliated-embedding} gives a rank-preserving embedding
\begin{equation}\label{eq:continuous-ring-affiliated-embedding}
 \mathcal M_{\mathbb C}\lhook\joinrel\longrightarrow\mathcal U(\mathcal R).
\end{equation}
\end{corollary}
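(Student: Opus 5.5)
The plan is to assemble the three isomorphisms in \eqref{eq:parallel-completions} from results already established, and then obtain the embedding by composition. Throughout, write $\alpha=\alpha^\tau$. By \Cref{prop:trace-harmonic}, $\tau$ is the trace $\tau_\alpha$ attached to $\alpha$, and extremality of $\tau$ is equivalent to extremality of $\alpha$.

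\emph{First isomorphism.} Since $\tau$ is extreme and $(B,\alpha)$ is $\alpha$-aperiodic, the final assertion of \Cref{prop:tracial-gns-factor} gives $M_\tau\cong\mathcal R$ directly.

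\emph{Second isomorphism.} Apply \Cref{thm:main} with $R=\mathbb C$ and $\rk=\operatorname{rank}_{\mathbb C}$. This is the unique Sylvester matrix rank on $\mathbb C$, because $\mathbb C$ is a division ring. The hypotheses are extremality of $\alpha$ and $\alpha$-aperiodicity, and both are assumed. The theorem gives a unital $\mathbb C$-algebra isomorphism $Q_\alpha\cong\mathcal M_{\mathbb C,\operatorname{rank}_{\mathbb C}}$ preserving all rectangular ranks. By the introduction's convention, the target is von Neumann's continuous factor $\mathcal M_{\mathbb C}$. If one wants to avoid the general machinery, \Cref{prop:central-field-transfer} with $K=R=\mathbb C$ yields the same conclusion.

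\emph{Third isomorphism.} \Cref{prop:cstar-rank-affiliated} gives $\overline{\mathcal A_B}^{\,\rk_\tau}\cong\mathcal U(M_\tau)$. We then transport along a $*$-isomorphism $M_\tau\cong\mathcal R$. The one point needing care is that this isomorphism carries $\widetilde\tau$ to the trace $\tau_{\mathcal R}$. This holds because a finite factor has a unique normal tracial state. A trace-preserving $*$-isomorphism of finite von Neumann algebras preserves affiliated operators, closures of sums and products, and range projections. It therefore induces a rank-preserving ring isomorphism $\mathcal U(M_\tau)\cong\mathcal U(\mathcal R)$. I expect this transport to be the only step that is not purely a citation, and it is routine.

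\emph{The embedding.} Compose the inverse of the second isomorphism, $\mathcal M_{\mathbb C}\to Q_\alpha$, with the embedding \eqref{eq:core-affiliated-embedding} $Q_\alpha\hookrightarrow\mathcal U(M_\tau)$ from \Cref{prop:trace-induced-rank}. Then follow with the isomorphism $\mathcal U(M_\tau)\cong\mathcal U(\mathcal R)$ just constructed. Each map preserves all rectangular ranks, so the composite is a rank-preserving unital embedding \eqref{eq:continuous-ring-affiliated-embedding}. It is injective because it is isometric for faithful ranks.
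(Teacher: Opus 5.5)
Your proposal is correct and follows the same approach as the paper. You use \Cref{prop:tracial-gns-factor} for $M_\tau\cong\mathcal R$, \Cref{thm:main} with $R=\mathbb C$ for $Q_{\alpha^\tau}\cong\mathcal M_{\mathbb C}$, and \Cref{prop:cstar-rank-affiliated,prop:trace-induced-rank} with transport along the trace-preserving isomorphism $M_\tau\cong\mathcal R$ for the remaining assertions, and your uniqueness-of-trace argument supplies the justification of trace preservation that the paper leaves implicit.
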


\begin{proof}
By \Cref{prop:tracial-gns-factor}, $M_\tau\cong\mathcal R$.
\Cref{thm:main} gives $Q_{\alpha^\tau}\cong\mathcal M_{\mathbb C}$.
The remaining assertions follow from
\Cref{prop:cstar-rank-affiliated,prop:trace-induced-rank}, since the
trace-preserving isomorphism $M_\tau\cong\mathcal R$ extends to
affiliated operators through their polar and spectral decompositions.
\end{proof}

\begin{example}\label{ex:uhf-completions}
For the diagrams $B_d$ in \Cref{ex:uhf-classification},
\Cref{prop:trace-harmonic} gives a unique trace $\tau$.
The unique harmonic function $\alpha$ is extreme, and
$(B_d,\alpha)$ is $\alpha$-aperiodic since $p_n(v_n)=d^n\to\infty$.  Thus
\Cref{cor:parallel-completions} gives
\[
 \overline A_\alpha(B_d,\mathbb C)\cong\mathcal M_{\mathbb C},\qquad
 M_\tau\cong\mathcal R,\qquad
 \overline{\mathcal A_{B_d}}^{\,\rk_\tau}\cong\mathcal U(\mathcal R).
\]
These isomorphism types are independent of $d$, whereas the
$C^*$-isomorphism type of $\mathcal A_{B_d}$ is determined by the
set of prime divisors of $d$; see \Cref{ex:uhf-classification}.
\end{example}

For the two-branch diagram in \Cref{ex:two-branch-gns},
\Cref{prop:cstar-rank-affiliated} gives
\[
 \overline{\mathcal A_B}^{\,\rk_{\tau_t}}
 \cong\mathcal U(\mathcal R)\times\mathcal U(\mathcal R)
 \qquad(0<t<1),
\]
with rank weights $t$ and $1-t$; at either endpoint it gives
$\mathcal U(\mathcal R)$.
For the Pascal diagram in \Cref{ex:pascal-gns}, the same proposition gives
\[
 \overline{\mathcal A_B}^{\,\rk_{\tau_t}}
 \cong
 \begin{cases}
  \mathcal U(\mathcal R),&0<t<1,\\
  \mathbb C,&t=0,1.
 \end{cases}
\]

\subsection{Measure and algebraic closures}
\label{subsec:other-closures}

Put $M=M_\tau$, with its faithful normal tracial state
$\widetilde\tau$, and $U=\mathcal U(M)$.
In this finite-trace setting every affiliated operator is
$\widetilde\tau$-measurable, so $U$ is also the algebra
$L^0(M,\widetilde\tau)$ of measurable operators used in
noncommutative integration \cite{Nelson1974,Nayak2019}.
For $\varepsilon,\delta>0$, set
\[
 V(\varepsilon,\delta)=
 \Set*{x\in U\given
 \begin{gathered}
 \text{there is a projection }e\in M\text{ with }
 \widetilde\tau(1-e)<\delta,\\
 xe\in M\text{ and }\|xe\|<\varepsilon
 \end{gathered}}.
\]
These sets form a neighbourhood basis of zero for the
\emph{measure topology} on $U$.
Thus $x_n\to x$ in measure if, for every $\varepsilon,\delta>0$,
$x_n-x\in V(\varepsilon,\delta)$ for all sufficiently large $n$.
The projection $e$ allows one to discard a part of trace less than
$\delta$ and require norm error less than $\varepsilon$ on the
remaining part.  For $M=L^\infty(X,\mu)$ on a probability space,
this is the usual convergence in measure on $U=L^0(X,\mu)$.

The measure topology makes $U$ a complete metrizable topological
$*$-algebra, and $M$ is dense in $U$
\cite{Nelson1974,Nayak2019}.
For a subalgebra $T\subseteq U$, its \emph{measure completion}
means its completion for the induced measure topology; it identifies
canonically with its measure closure in $U$.
In the present setting,
\begin{equation}\label{eq:common-measure-completion}
 \overline{\pi_\tau(A(B,\mathbb C))}^{\,\mathrm{measure}}
 =\overline{\pi_\tau(\mathcal A_B)}^{\,\mathrm{measure}}
 =\overline M^{\,\mathrm{measure}}=U.
\end{equation}
To see this, put $S=\pi_\tau(A(B,\mathbb C))$ and
$\|y\|_2=\widetilde\tau(y^*y)^{1/2}$ for $y\in M$.
Cyclicity of the GNS vector and norm density of
$A(B,\mathbb C)$ in $\mathcal A_B$ imply that $S$ is dense in
$M$ for $\|\cdot\|_2$.
For $y\in M$, the spectral estimate
\[
 \widetilde\tau\bigl(\mathbf 1_{(\varepsilon/2,\infty)}(|y|)\bigr)
 \leq4\varepsilon^{-2}\|y\|_2^2
\]
shows that $2$-norm convergence implies convergence in measure:
take $e=\mathbf 1_{[0,\varepsilon/2]}(|y|)$, so that
$\|ye\|\leq\varepsilon/2<\varepsilon$.
Finally, for $x\in U$, the spectral truncations
$x\mathbf 1_{[0,n]}(|x|)$ lie in $M$ and converge to $x$ in measure.
This proves \eqref{eq:common-measure-completion}.

Rank convergence implies convergence in measure: for $y\in U$,
the kernel projection $e$ satisfies $ye=0$ and
$\widetilde\tau(1-e)=\rk_{\widetilde\tau}(y)$.
The converse fails, since $n^{-1}1\to0$ in measure while
$\rk_{\widetilde\tau}(n^{-1}1)=1$.
In particular, the measure closure of $S$ is always $U$, whereas
its rank closure $Q_\alpha$ can be a proper subring of $U$.

Let $S$ be a unital $*$-subring of $U=\mathcal U(M_\tau)$.
Its \emph{rational closure} $\operatorname{Rat}_U(S)$ is the
smallest subring $T$ with $S\subseteq T\subseteq U$ such that,
whenever $X\in\Mat_n(T)$ is invertible in $\Mat_n(U)$,
all entries of $X^{-1}$ belong to $T$.
Its \emph{$*$-regular closure} $\operatorname{Reg}^{*}_U(S)$
is the smallest $*$-regular subring of $U$ containing $S$.
It is obtained by repeatedly adjoining relative inverses and
forming the generated $*$-subring
\cite[Proposition~6.2]{AraGoodearl2017}.

Every unital regular subring $T\subseteq U$ is rationally closed.
Indeed, if $X\in\Mat_n(T)$ is invertible over $U$, regularity of
$\Mat_n(T)$ gives $Y\in\Mat_n(T)$ with $XYX=X$.
Multiplication by $X^{-1}$ on both sides gives $Y=X^{-1}$.
Consequently,
\begin{equation}\label{eq:algebraic-closure-inclusions}
 S\subseteq\operatorname{Rat}_U(S)
 \subseteq\operatorname{Reg}^{*}_U(S)\subseteq U.
\end{equation}
For $S=\pi_\tau(A(B,\mathbb C))$, relative inverses can be
computed within the finite-dimensional stages, so $S$ is already
$*$-regular.  By \Cref{prop:regular-simple-comparison}, its rank
closure $Q_\alpha$ is also $*$-regular.  Hence
\begin{equation}\label{eq:core-algebraic-closures}
 \begin{aligned}
 \operatorname{Rat}_U(S)&=\operatorname{Reg}^{*}_U(S)=S,
 &\overline S^{\,\rk_{\widetilde\tau}}&=Q_\alpha,\\
 \operatorname{Rat}_U(Q_\alpha)&=
 \operatorname{Reg}^{*}_U(Q_\alpha)=Q_\alpha.
 \end{aligned}
\end{equation}
On the other hand, by \Cref{rem:ore-localization}, $U$ is the
classical ring of quotients of $M_\tau$: its elements are fractions
$ab^{-1}$ with $a,b\in M_\tau$
and $b$ a non-zero-divisor.  Thus adjoining inverses already gives
\[
 \operatorname{Rat}_U(M_\tau)=\operatorname{Reg}^{*}_U(M_\tau)=U.
\]


\begin{thebibliography}{99}

\bibitem{Anderson2017}
A.~Anderson,
\newblock The Fra\"iss\'e limit of matrix algebras with the rank metric,
\newblock preprint, arXiv:1712.04431 (2017).
\newblock \url{https://arxiv.org/abs/1712.04431}.

\bibitem{AraClaramunt2018}
P.~Ara and J.~Claramunt,
\newblock Uniqueness of the von Neumann continuous factor,
\newblock \emph{Canad. J. Math.} \textbf{70} (2018), 961--982.
\newblock \url{https://doi.org/10.4153/CJM-2018-010-3}.

\bibitem{AraGoodearl2017}
P.~Ara and K.~R.~Goodearl,
\newblock The realization problem for some wild monoids and the Atiyah problem,
\newblock \emph{Trans. Amer. Math. Soc.} \textbf{369} (2017), 5665--5710.
\newblock \url{https://doi.org/10.1090/tran/6889}.

\bibitem{AtiyahMacdonald1969}
M.~F.~Atiyah and I.~G.~Macdonald,
\newblock \emph{Introduction to Commutative Algebra},
\newblock Addison-Wesley, Reading, MA, 1969.

\bibitem{BezuglyiDudkoKarpel2026}
S.~Bezuglyi, A.~Dudko, and O.~Karpel,
\newblock Measures and dynamics on Pascal--Bratteli diagrams,
\newblock \emph{J. Math. Phys. Anal. Geom.} \textbf{22} (2026), no.~1,
  3--21.
\newblock \url{https://doi.org/10.15407/mag22.01.01}.

\bibitem{BezuglyiKarpel2016}
S.~Bezuglyi and O.~Karpel,
\newblock Bratteli diagrams: structure, measures, dynamics,
\newblock in \emph{Dynamics and Numbers}, Contemp. Math., vol.~669,
  Amer. Math. Soc., Providence, RI, 2016, pp.~1--36.
\newblock \url{https://doi.org/10.1090/conm/669/13421}.

\bibitem{BezuglyiKwiatkowskiMedynetsSolomyak2013}
S.~Bezuglyi, J.~Kwiatkowski, K.~Medynets, and B.~Solomyak,
\newblock Finite rank Bratteli diagrams: structure of invariant measures,
\newblock \emph{Trans. Amer. Math. Soc.} \textbf{365} (2013), no.~5,
  2637--2679.
\newblock \url{https://doi.org/10.1090/S0002-9947-2012-05744-8}.

\bibitem{Bratteli1972}
O.~Bratteli,
\newblock Inductive limits of finite dimensional $C^*$-algebras,
\newblock \emph{Trans. Amer. Math. Soc.} \textbf{171} (1972), 195--234.
\newblock \url{https://doi.org/10.1090/S0002-9947-1972-0312282-2}.

\bibitem{Effros1981}
E.~G.~Effros,
\newblock \emph{Dimensions and $C^*$-Algebras},
\newblock CBMS Regional Conference Series in Mathematics, vol.~46,
  Amer. Math. Soc., Providence, RI, 1981.
\newblock \url{https://doi.org/10.1090/cbms/046}.

\bibitem{Elek2013}
G.~Elek,
\newblock Connes embeddings and von Neumann regular closures of amenable group
  algebras,
\newblock \emph{Trans. Amer. Math. Soc.} \textbf{365} (2013), 3019--3039.
\newblock \url{https://doi.org/10.1090/S0002-9947-2012-05687-X}.

\bibitem{Elliott1976}
G.~A.~Elliott,
\newblock On the classification of inductive limits of sequences of
  semisimple finite-dimensional algebras,
\newblock \emph{J. Algebra} \textbf{38} (1976), 29--44.
\newblock \url{https://doi.org/10.1016/0021-8693(76)90242-8}.

\bibitem{Glimm1960}
J.~G.~Glimm,
\newblock On a certain class of operator algebras,
\newblock \emph{Trans. Amer. Math. Soc.} \textbf{95} (1960), 318--340.
\newblock \url{https://doi.org/10.1090/S0002-9947-1960-0112057-5}.

\bibitem{Goodearl1991}
K.~R.~Goodearl,
\newblock \emph{Von Neumann Regular Rings},
\newblock second ed., Krieger Publishing Co., Malabar, FL, 1991.
\newblock ISBN 978-0-89464-632-4.

\bibitem{Halperin1968}
I.~Halperin,
\newblock Von Neumann's manuscript on inductive limits of regular rings,
\newblock \emph{Canad. J. Math.} \textbf{20} (1968), 477--483.
\newblock \url{https://doi.org/10.4153/CJM-1968-045-0}.

\bibitem{HermanPutnamSkau1992}
R.~H.~Herman, I.~F.~Putnam, and C.~F.~Skau,
\newblock Ordered Bratteli diagrams, dimension groups and topological dynamics,
\newblock \emph{Internat. J. Math.} \textbf{3} (1992), no.~6, 827--864.
\newblock \url{https://doi.org/10.1142/S0129167X92000382}.

\bibitem{HungLi2023}
T.~F.~Hung and H.~Li,
\newblock Malcolmson semigroups,
\newblock \emph{J. Algebra} \textbf{623} (2023), 193--233.
\newblock \url{https://doi.org/10.1016/j.jalgebra.2023.01.031}.

\bibitem{JaikinLopez2020}
A.~Jaikin-Zapirain and D.~L\'opez-\'Alvarez,
\newblock On the space of Sylvester matrix rank functions,
\newblock preprint, arXiv:2012.15844 (2020).
\newblock \url{https://arxiv.org/abs/2012.15844}.

\bibitem{JaikinZapirain2019}
A.~Jaikin-Zapirain,
\newblock The base change in the Atiyah and the L\"uck approximation conjectures,
\newblock \emph{Geom. Funct. Anal.} \textbf{29} (2019), no.~2, 464--538.

\bibitem{JiangLi2021}
B.~Jiang and H.~Li,
\newblock Sylvester rank functions for amenable normal extensions,
\newblock \emph{J. Funct. Anal.} \textbf{280} (2021), no.~6, 108913.
\newblock \url{https://doi.org/10.1016/j.jfa.2020.108913}.

\bibitem{Li2021}
H.~Li,
\newblock Bivariant and extended Sylvester rank functions,
\newblock \emph{J. Lond. Math. Soc.} (2) \textbf{103} (2021), no.~1, 222--249.
\newblock \url{https://doi.org/10.1112/jlms.12372}.

\bibitem{LopezAlvarez2021}
D.~L\'opez \'Alvarez,
\newblock \emph{Sylvester rank functions, epic division rings and the strong
  Atiyah conjecture for locally indicable groups},
\newblock Ph.D. thesis, Universidad Aut\'onoma de Madrid, 2021.
\newblock \url{https://hdl.handle.net/10486/696201}.

\bibitem{Malcolmson1980}
P.~Malcolmson,
\newblock Determining homomorphisms to skew fields,
\newblock \emph{J. Algebra} \textbf{64} (1980), no.~2, 399--413.
\newblock \url{https://doi.org/10.1016/0021-8693(80)90153-2}.

\bibitem{MurrayVonNeumann1936}
F.~J.~Murray and J.~von Neumann,
\newblock On rings of operators,
\newblock \emph{Ann.\ of Math.} (2) \textbf{37} (1936), no.~1, 116--229.
\newblock \url{https://doi.org/10.2307/1968693}.

\bibitem{MurrayVonNeumann1943}
F.~J.~Murray and J.~von Neumann,
\newblock On rings of operators. IV,
\newblock \emph{Ann.\ of Math.} (2) \textbf{44} (1943), 716--808.
\newblock \url{https://doi.org/10.2307/1969107}.

\bibitem{Nayak2019}
S.~Nayak,
\newblock On Murray-von Neumann algebras---I: Topological, order-theoretic
  and analytical aspects,
\newblock \emph{Banach J. Math. Anal.} \textbf{15} (2021), article~45.
\newblock \url{https://doi.org/10.1007/s43037-021-00129-7}.
\newblock Revised version: \url{https://arxiv.org/abs/1911.01978v3}.

\bibitem{Nelson1974}
E.~Nelson,
\newblock Notes on non-commutative integration,
\newblock \emph{J. Funct. Anal.} \textbf{15} (1974), 103--116.
\newblock \url{https://doi.org/10.1016/0022-1236(74)90014-7}.

\bibitem{Putnam2018}
I.~F.~Putnam,
\newblock \emph{Cantor Minimal Systems},
\newblock University Lecture Series, vol.~70, American Mathematical Society,
  Providence, RI, 2018.
\newblock ISBN 978-1-4704-4115-9.

\bibitem{Saito1967}
K.~Sait\^o,
\newblock Non-commutative extension of Lusin's theorem,
\newblock \emph{T\^ohoku Math. J.} (2) \textbf{19} (1967), 332--340.
\newblock \url{https://doi.org/10.2748/tmj/1178243283}.

\bibitem{Schneider2024}
F.~M.~Schneider,
\newblock Group von Neumann algebras, inner amenability, and unit groups
  of continuous rings,
\newblock \emph{Int. Math. Res. Not. IMRN} (2024), 6422--6446.
\newblock \url{https://doi.org/10.1093/imrn/rnad181}.

\bibitem{Schneider2026}
F.~M.~Schneider,
\newblock Geometric properties of unit groups of von Neumann's continuous rings,
\newblock \emph{J. Algebra}, to appear; final version, arXiv:2509.01556v3 (2026).
\newblock \url{https://doi.org/10.1016/j.jalgebra.2026.08.005}.

\bibitem{SchneiderThom2026}
F.~M.~Schneider and A.~Thom,
\newblock Unitary representations and von Neumann's continuous geometries,
\newblock preprint, arXiv:2604.26104 (2026).
\newblock \url{https://arxiv.org/abs/2604.26104}.

\bibitem{Schofield1985}
A.~H.~Schofield,
\newblock \emph{Representations of Rings over Skew Fields},
\newblock London Math. Soc. Lecture Note Ser., vol.~92,
  Cambridge University Press, Cambridge, 1985.
\newblock \url{https://doi.org/10.1017/CBO9780511661914}.

\bibitem{Thom2008}
A.~Thom,
\newblock Sofic groups and diophantine approximation,
\newblock \emph{Comm. Pure Appl. Math.} \textbf{61} (2008), no.~8,
  1155--1171.
\newblock \url{https://doi.org/10.1002/cpa.20217}.

\bibitem{Vas2005}
L.~Va\v{s},
\newblock Torsion theories for finite von Neumann algebras,
\newblock \emph{Comm. Algebra} \textbf{33} (2005), no.~3, 663--688.
\newblock \url{https://doi.org/10.1081/AGB-200049871}.

\bibitem{Vershik2014}
A.~M.~Vershik,
\newblock The problem of describing central measures on the path spaces of
  graded graphs,
\newblock \emph{Funct. Anal. Appl.} \textbf{48} (2014), no.~4, 256--271.
\newblock \url{https://doi.org/10.1007/s10688-014-0069-5}.

\bibitem{vonNeumann1936Geometry}
J.~von Neumann,
\newblock Continuous geometry,
\newblock \emph{Proc. Natl. Acad. Sci. USA} \textbf{22} (1936), no.~2, 92--100.
\newblock \url{https://doi.org/10.1073/pnas.22.2.92}.

\end{thebibliography}
\end{document}